\documentclass[12pt]{amsart}

\usepackage{amsmath}
\usepackage{amscd}
\usepackage{amssymb}
\usepackage{latexsym}
\usepackage{stmaryrd}
\usepackage[backend=bibtex]{biblatex}

\usepackage{mathrsfs}

\textwidth 160mm
\textheight 220mm
\topmargin 0mm
\oddsidemargin 0pt
\evensidemargin 0pt

\newtheorem{theorem}{Theorem}[section]
\newtheorem*{theorem*}{Theorem}

\newtheorem{lemma}[theorem]{Lemma}
\newtheorem*{lemma*}{Lemma}
\newtheorem{corollary}[theorem]{Corollary}
\newtheorem{proposition}[theorem]{Proposition}

\newtheorem{remark}[theorem]{Remark}
\newtheorem{definition}[theorem]{Definition}

 %numeriert nach section
 %numeriert nach section
 %numeriert nach section
 %numeriert nach section
 %numeriert nach section

\newcommand{\am}{\alpha^{(m)}}
\newcommand{\J}{\mathcal{J}}
\newcommand{\I}{\mathcal{I}}
\newcommand{\HH}{\mathcal{H}}
\newcommand{\hh}{\mathcal{H}}

\newcommand{\A}{\mathcal{A}}
\newcommand{\B}{\mathcal{B}}
\newcommand{\M}{\mathcal{M}}

\newcommand{\Q}{\mathcal{Q}}

\newcommand{\C}{\mathbb{C}}

\newcommand{\ATP}{\mathcal{A}^+_{tail}}
\newcommand{\ATN}{\mathcal{A}^-_{tail}}
\newcommand{\p}{{\bf P}}
\newcommand{\um}{ u^{(m)}}
\newcommand{\ub}{ u^{(b)}}
\usepackage[all]{xy}

\bibliography{references}
\hyphenation{mar-gin-al-ia}
\hyphenation{bra-va-do}

\begin{document}
\title[noncommutative spreadability]{ Extended de Finetti  theorems for boolean independence and monotone independence}
\author{Weihua Liu}
\maketitle
\begin{abstract}
We construct several new spaces of quantum sequences  and their quantum families of maps in sense of So{\l}tan.   Then, we introduce noncommutative distributional symmetries  associated with these quantum maps and study simple relations between them.    We will focus on studying two kinds of noncommutative distributional symmetries: monotone spreadability and boolean spreadability.  We provide an example of a spreadable sequence of random variables for which the usual unilateral shift is an unbounded map. As a result, it is natural to study bilateral sequences of random objects, which are indexed by integers, rather than unilateral sequences. In the end of the paper, we will show  Ryll-Nardzewski type theorems for monotone independence and boolean independence:  Roughly speaking, an infinite bilateral sequence of random variables is monotonically(boolean) spreadable if  and only if the variables are identically distributed and monotone(boolean) with respect to the conditional expectation onto its tail algebra. For an infinite sequence of noncommutative random variables, boolean spreadability is equivalent to boolean exchangeability.
\end{abstract}

\section{Introduction}
The characterization of random objects with distributional symmetries is an important object in modern probability and the recent context of Kallenberg \cite{Ka} provides a comprehensive treatment of distributional symmetries in classical probability.   A finite sequence of random variables $(\xi_1,\xi_2,...,\xi_n)$  is said to be  exchangeable if   

$$ (\xi_1,...,\xi_n)\overset{d}{=}(\xi_{\sigma(1)},...,\xi_{\sigma(n)}),\,\,\,\, \forall \sigma\in S_n,$$
where $S_n$ is the permutation group of $n$ elements and $\overset{d}{=}$ meas the joint distribution of the two sequences are the same.
Compare with  exchangeability, there is a weaker condition  of spreadability: $(\xi_1,...,\xi_n)$ is said to be spreadable if for any $k<n$, we have 
\begin{equation}\label{1}
(\xi_1,...,\xi_k)\overset{d}{=}(\xi_{l_1},...,\xi_{l_k}),\,\,\,\, l_1<l_2<\cdots<l_k
\end{equation}
An infinite sequence of random variables is said to be exchangeable or spreadable if all its finite subsequences have this property. 
In the study of distributional symmetries in classical probability, one of the most important results  is  de Finetti's  theorem which states that an infinite sequence of random variables, whose joint distribution is invariant under all finite permutations, is conditionally independent and identically distributed. Later,  in \cite{RN},  Ryll-Nardzewski showed that  de Finetti theorem hold under the weaker condition of spreadability. Therefore, for infinite sequences of random variables in classical probability, spreadability is equivalent to exchangeability. 

Recently, K\"oslter \cite{Ko} studied   three  kinds of distributional symmetries, which are  stationarity, contractability and exchangeablity, in noncommutative probability.  It was shown that exchangeability and spreadability do not characterize any universal independent relation in his framework.  In addition,  for infinite sequences, exchangeability is strictly stronger than spreadability in noncommutative probability.  It should be pointed out that the framework in his paper is a $W^*$-probability space with a faithful state.
In this paper, we will consider our problems in a more general framework.

In the 1980's, Voiculescu developed his free probability theory and introduced a universal independent relation, namely free independence, via reduced free products of unital $C^*$-algebras\cite{Vo1}. For more details of free probability, the reader is referred to the monograph \cite{VDN}. One can see that there is a deep parallel between classical probability and free probability.  Recently,  in \cite{KS}, K\"oslter and Speicher extended this parallel to the aspect of  distributional symmetries. In their work,  by strengthening classical exchangeability to quantum exchangeability, they proved a de Finetti type theorem for free independence, i.e.
for an infinite sequence of random variables, quantum exchangeability is equivalent to the fact that the the random variables are identically distributed and free with respect to the conditional expectation onto the tail algebra. The notion of quantum exchangeability is given by invariance conditions associated with  quantum permutation groups $A_s(n)$ of Wang \cite{Wan2}.    This noncommutative  de Finetti type theorem  is an instance  that free independence plays in the noncommutative world the same role as classical independence plays in the commutative world.  It naturally raises a motivation for further study of noncommutative symmetries that \lq\lq any result in classical probability should have an extension in free probability."  For applications of this philosophy, see\cite{BCS}, \cite{Cu3}, \cite{CS}.  Especially, in \cite{Cu3},  Curran introduced a quantum version of spreadability for free independence. It was shown that quantum spreadability is weaker than quantum exchangeability and is a characterization of free independence.  More specifically, in a $W^*$-probability space with a tracial faithful state, for an infinite sequence of random variables, quantum spreadability is equivalent to the fact that the the random variables are identically distributed and free with respect to the conditional expectation onto the tail algebra. In other words, quantum spreadability is equivalent to quantum exchangeability  for infinite sequences of random variables in tracial $W^*$-probability spaces.  Another remarkable application of quantum exchangeability was given by Freslon and  Weber \cite{FW}. They characterize Voiculescu's Bi-freeness \cite{Vo}  via certain invariance conditions associated with Wang's quantum groups $A_s(n)$.

%{Besides classical independence and free independence, in noncommutative probability, we have many other independences e.g. monotone independence \cite{Mu1}, boolean independence \cite{SW}, type B independence \cite{BGN} and more recently two-face freeness for pairs of random variables \cite{Vo}.} 
 
 In \cite{SW},  Speicher and Woroudi introduced another independence relation which is called boolean independence. It was show that boolean independence is related to full free product of algebras  \cite{CF1} and boolean product is the unique non-unital universal product in noncommutative probability \cite{Sp}.  The study of distributional symmetries for boolean independence was started  in \cite{Liu}.  We constructed a family of quantum semigroups in analogue with Wang's quantum permutation groups and defined their coactions on joint distributions of sequences. It  was shown that the distributional symmetries associated those coactions can be used to characterize  boolean independence in a proper framework.  For more details about boolean independence and universal products, see  \cite{Sp}.   It  inspires  us to study more distributional symmetries for boolean independence under the philosophy \lq\lq any result in classical probability and free probability should have an extension for boolean independence\rq\rq.  In analogue with easy quantum groups in \cite{BCS2}, we construct \lq\lq easy \rq\rq boolean semigroups  and study their de Finetti type theorems in \cite{Liu1}.  To apply our philosophy further,  it is naturally to find an extended de Finetti type theorem for boolean independence. Specifically, we need to find the \lq\lq noncommutative version of spreadability \rq\rq for boolean independence and prove an extended de Finetti type theorem associated with the noncommutative spreadability.\\

The main purpose of  this paper is to study noncommutative versions of spreadability and extended de Finetti type theorems associated with them.\\

Some other objects come into our consideration when we study spreadable sequences of random objects.  It was shown in \cite{Mu}, there are two other universal products in noncommutative probability if people do not require the universal construction to be commutative.  We call the two universal products monotone and anti-monotone product. As tensor product, free product and boolean product, we can define monotone and anti-monotone independence associated with monotone and anti-monotone product. Monotone independence and anti-monotone independence are essentially the same but with different orders, i.e. if $a$ is monotone with $b$, then $b$ is anti-monotone with $a$. For more details of monotone independence, the reader is referred to \cite{Mu1}, \cite{Po}.  It is well known that a sequence of monotone  random variables is not exchangeable but spreadable.  Therefore, there should be a noncommutative spreadability which can characterize conditionally monotone independence. 

 The first several sections devote to defining noncommutative distributional symmetries in analogue with spreadability and partial exchangeability.  Recall that in \cite{BCS} \cite{Cu3},  noncommutative distributional symmetries are defined via invariance conditions associated with certain quantum structures.  For instance, Curran's quantum spreadability is described by a family of quantum increasing sequences and their quantum family of maps in sense of Soltan. The family of quantum increasing sequences  are universal $C^*$-algebras $A_i(n,k)$ generated by the entries of a $n\times k$ matrix which satisfy certain relations $R$.  Following the idea in \cite{Liu}, to construct  a boolean type of spaces of  increasing sequences $B_i(n,k)$, we replace the unit partition condition in $R$  by an invariant projection condition.  Recall that in  In \cite{Fr1}, Franz studied  relations between freeness, monotone independence and boolean independence  via Bo{\.z}ejko, Marek and Speicher's two-state free products\cite{BS}. In his construction,  monotone product is something \lq\lq between" free product and boolean  product.  Thereby, we  construct the noncommutative spreadability for monotone independence by modifying quantum spreadability and our boolean spreadability. We will study simple relations between those distributional symmetries, i.e. which one is stronger.

As the situation for boolean independence, there is no nontrivial pair of monotonically independent random variables in $W^*$-probability spaces with faithful states.  Therefore, the framework we use in this paper is a $W*$-probability space with a non-degenerated normal state which gives  a faithful GNS representation of the probability space. In this framework, we will see that spreadability is too weak to ensure the existence of a conditional expectation. Recall that, in $W^*$-probability spaces with faithful states, we can define a normal shift on a unilateral infinite sequence of spreadable random variables.  Here, \lq\lq unilateral\rq\rq means the sequence is indexed by natural numbers $\mathbb{N}$. An important property of this shift is that its norm is one. Therefore, given an operator, we can construct a WOT convergent sequence of bounded variables via shifts. The is the key step to construct a normal conditional expectation in previous works.   But, in $W^*$-probability spaces with non-degenerated normal states, the unilateral shift of spreadable random variables is not necessarily norm one. An example is provided in the beginning of section  6. Actually, the sequence of random variables are monotonically spreadable which is an invariance condition stronger than classical spreadability.  Therefore,  we can not construct a conditional expectation, for unilateral sequences, via shifts under the condition of spreadability. To fix this issue,  we will consider bilateral sequences of random variables instead of  unilateral sequences.  \lq\lq bilateral\rq\rq means that the sequences are indexed by integers $\mathbb{Z}$.   In this framework, we will see that the shift of spreadable random variables is norm one so that we can define a conditional expectation via shifts by following K\"ostler's construction.  Notice that the index set $\mathbb{Z}$ has two infinities, i.e. the positive infinity and the negative infinity.  Therefore, we will have two tail algebras with  respect to the two infinities and will define two conditional expectations consequently. We denote by $E^+$  the conditional expectation which shifts  indices to the positive infinity and $E^-$ the conditional expectation which shifts  indices to negative infinity.   We will see that the two tail algebras are subsets of fixed points of the shift and the conditional expectations may not be extended normally to the whole algebra.  In general,  the two tail algebras are different and the conditional expectation may have different properties.  To noncommutative spreadability for monotone independence, we have the following:

\begin{theorem}\label{theorem 1}
Let $(\A,\phi)$ be a non degenerated $W^*$-probability space and $(x_i)_{i\in\mathbb{Z}}$ be a bilateral infinite sequence of selfadjoint random variables which generate $\A$ as a von Neumann algebra. Let $\A^+_k$ be the WOT closure of the non-unital algebra generated by $\{x_i|i\geq k\}$. Then the following are equivalent:
\begin{itemize}
\item[a)] The joint distribution of $(x_i)_{i\in \mathbb{Z}}$  is monotonically spreadable. 
\item[b)]  For all $k\in\mathbb{Z}$, there exits a $\phi$ preserving conditional expectation $E_k:\A^+_k\rightarrow \ATP$ such that the sequence  $(x_i)_{i\geq k}$ is identically distributed and monotonically independent with respect $E_k$.  Moreover, $E_k|_{\A_{k'}}=E_{k'}$ when $k\geq k'$.
\end{itemize}
\end{theorem}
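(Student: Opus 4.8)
The plan is to prove the two implications separately, with $b)\Rightarrow a)$ being the routine direction and $a)\Rightarrow b)$ carrying the analytic and combinatorial weight. For $b)\Rightarrow a)$, assume the conditional expectations $E_k$ exist with the stated identical-distribution and monotone-independence properties. I would show that every mixed moment $\phi(x_{i_1}\cdots x_{i_n})$ is governed by a universal formula depending only on the order pattern of the multi-index and on the common $E_k$-distribution of a single variable: monotone independence relative to $E_k$ reduces any such moment to nested applications of $E_k$ dictated by the local-maximum (peak) structure of $(i_1,\dots,i_n)$, and identical distribution makes each reduction step independent of the actual index used. Hence the moment is unchanged under any order-preserving reindexing $i_1<\cdots<i_n\mapsto l_1<\cdots<l_n$, and reexpressing this invariance through the coactions of the quantum increasing sequences that define monotone spreadability gives $a)$. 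The only thing left to check is that the defining relations (the invariant-projection condition) are compatible with the factorization, which I expect to be a direct computation.

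For $a)\Rightarrow b)$, the heart of the matter, I would first deduce from monotone spreadability the weaker classical spreadability, which at once yields that all $x_i$ are identically distributed and that the right shift $S\colon x_i\mapsto x_{i+1}$ extends to a $\phi$-preserving normal $\ast$-endomorphism of $\A$. The crucial point, and the reason for working with $\mathbb{Z}$-indexed sequences, is that on bilateral sequences $S$ has norm one, so for bounded $a$ the orbit $\{S^n a\}$ stays bounded and admits WOT cluster points. Following K\"ostler's construction I would then obtain $E_k$ as a limit of (averages of) shifted elements, show that the limit lies in $\ATP=\bigcap_n \A^+_n$, and use the faithfulness of the GNS representation guaranteed by non-degeneracy of $\phi$ to see that this limit is a genuine $\phi$-preserving conditional expectation rather than a mere contractive projection. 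The compatibility $E_k|_{\A_{k'}}=E_{k'}$ for $k\geq k'$ should then follow from the intertwining relation $S\,E_k=E_{k+1}\,S$ together with the nesting $\A^+_{k'}\supseteq\A^+_k$.

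The main obstacle is extracting \emph{monotone} independence relative to $E_k$ from the spreadability invariance. Unlike the tracial free setting, the monotone factorization is asymmetric and sensitive to the order of indices, so I must show that comparing moments under the quantum-increasing-sequence coaction forces exactly the peak-reduction identities $E_k(a\,x_i\,b)=E_k\big(a\,E_k(x_i)\,b\big)$ whenever $x_i$ occupies a local maximum of the index pattern, with $a$ and $b$ built from strictly smaller indices on the two sides. I would isolate these identities by testing spreadability against increasing sequences that separate a chosen index from its neighbors, and then pass from equality of all scalar moments to the operator-valued identity by exploiting the non-degenerate state. Verifying that this yields full monotone independence in Muraki's sense, and not merely a weaker factorization, is the most delicate step, and is precisely where the distinction between monotone and boolean spreadability must enter.
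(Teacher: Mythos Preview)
Your overall architecture matches the paper's: $b)\Rightarrow a)$ via the observation that monotone independence plus identical distribution makes mixed moments depend only on the order pattern of the index string, and $a)\Rightarrow b)$ via the bilateral shift $\alpha$, the WOT-limit conditional expectation $E^+$, and then an extraction of the peak-factorization identities from the invariance. Two points, however, are genuine gaps rather than details to be filled in routinely.

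First, for the peak-reduction step you write that you would ``test spreadability against increasing sequences that separate a chosen index from its neighbors'' and then pass from scalar identities to operator-valued ones via non-degeneracy. This is not yet a mechanism. The paper's actual lever is a specific $C^*$-representation of $M_i(n+k,k)$ (Proposition~\ref{nontrivial representation}) in which the top column is given multiplicity $n+1$ while all lower columns have multiplicity $1$. Plugging this representation into the operator-valued invariance identity turns the coaction at the maximal index $i_s=k$ into the Ces\`aro average $\frac{1}{n+1}\sum_{j_s=k}^{n+k}(\cdots p_s(x_{j_s})\cdots)$; letting $n\to\infty$ and using normality of $E^+$ converts this average into $E^+[p_s(x_{i_s})]$. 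That is how the asymmetric monotone factorization is forced, and nothing weaker than this concrete averaging trick seems to do the job. Your plan does not yet contain a candidate for this step.

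Second, you skip an intermediate lift that the paper treats carefully: before one can even formulate the peak identity at the level of $E^+$, one must show that the $E^+$-valued joint distribution is itself monotonically spreadable, i.e.\ that
\[
E^+[p_1(x_{i_1})\cdots p_m(x_{i_m})]\otimes\p=\sum_{j_1,\dots,j_m} E^+[p_1(x_{j_1})\cdots p_m(x_{j_m})]\otimes \p u_{j_1,i_1}\cdots u_{j_m,i_m}\p
\]
with coefficients $b_t\in\ATP$ allowed between the variables. This is done in the paper by sandwiching with arbitrary $a_1,a_2$ from the negative side, applying the scalar invariance (via an auxiliary embedding $M_i(n,k)\hookrightarrow M_i(n+n_1+n_2,k+n_1+n_2)$, Lemma~\ref{extension}), and only then letting the shift go to $+\infty$. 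Non-degeneracy alone does not bridge this; you need the embedding lemma to absorb the outer words into the invariance condition. Once both pieces are in place, your proposed induction on the maximal index is exactly how the paper finishes.

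A minor correction: on a bilateral sequence the shift is not merely a norm-one endomorphism but an automorphism (conjugation by a unitary on the GNS space), and $E^+$ is obtained as the plain WOT limit $\lim_{l\to\infty}\alpha^l(a)$ on each $\A_k^+$, not as an average; the averaging appears only later, inside the peak-reduction argument.
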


In general, we can not extend $E^+$ to the whole algebra $\A$, but we have the following:
\begin{proposition}
Let $(\A,\phi)$ be a non degenerated $W^*$-probability space and $(x_i)_{i\in\mathbb{Z}}$ be a bilateral infinite sequence of selfadjoint random variables which generate $\A$ as a von Neumann algebra. If the joint distribution of $(x_i)_{i\in \mathbb{Z}}$  is monotonically spreadable, then $E^-$ can be extend to the whole algebra $\A$ normally.
\end{proposition}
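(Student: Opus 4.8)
The plan is to realise $E^-$ spatially on the GNS space and to exploit the fact that a compression by a fixed projection is automatically ultraweakly continuous. Since $\phi$ is non-degenerate, the GNS construction yields a faithful representation $\pi\colon \A\to B(H)$ with a cyclic vector $\xi$ satisfying $\phi(a)=\langle \pi(a)\xi,\xi\rangle$; because monotone spreadability entails stationarity, the shift $\alpha\colon x_i\mapsto x_{i+1}$ is $\phi$-preserving and is implemented by an isometry $U$ with $U\xi=\xi$, which is in fact a unitary since the index set $\mathbb{Z}$ makes $\alpha$ invertible. Arguing as in Theorem \ref{theorem 1} but for the negative half-lines, one obtains a compatible family of $\phi$-preserving conditional expectations $E^-_k\colon\A^-_k\to\ATN$, where $\A^-_k$ is the WOT closure of the non-unital algebra generated by $\{x_i\mid i\le k\}$. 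Their common restriction defines a $\phi$-preserving conditional expectation $E^-$ on the WOT-dense $*$-subalgebra $\A_0:=\bigcup_k\A^-_k$ onto $\ATN$, and $\phi$-preservation gives the spatial identity $\pi(E^-(a))\xi=e\,\pi(a)\xi$ for all $a\in\A_0$, where $e$ denotes the orthogonal projection of $H$ onto $\overline{\pi(\ATN)\xi}$.

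First I would reduce the normal extendability of $E^-$ to a single spatial statement. The compression map $a\mapsto e\,\pi(a)e$ is ultraweakly continuous on $\pi(\A)$, so it suffices to prove the commuting-square inclusion $e\,\pi(\A)\,e\subseteq\pi(\ATN)\,e$. Granting this, each $a\in\A$ determines a unique element $\tilde E^-(a)\in\ATN$ with $\pi(\tilde E^-(a))e=e\,\pi(a)e$; the assignment $a\mapsto\tilde E^-(a)$ is then a normal, $\phi$-preserving conditional expectation onto $\ATN$, and it agrees with $E^-$ on $\A_0$ because there $e\,\pi(a)e$ and $\pi(E^-(a))e$ have the same action on the cyclic vector. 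Thus the whole problem is the inclusion $e\,\pi(\A)e\subseteq\pi(\ATN)e$, equivalently that compression by $e$ carries $\pi(\A)$ into the algebra generated by the tail.

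This inclusion is where the asymmetry of monotone independence does the real work, and it is the step I expect to be the main obstacle. For $b\in\A_0$ supported on indices $\le N$ and $c,d\in\ATN$, I would compute $\langle \pi(b)\,\pi(d)\xi,\pi(c)\xi\rangle=\phi(c^*bd)$ and use monotone spreadability to move the support of $b$ far to the left: in the negative direction the shifted block occupies the dominated position relative to the tail variables, so the monotone factorisation rule forces $\phi(c^*bd)$ to depend on $b$ only through $E^-(b)$, which is exactly the identity $e\,\pi(b)e=\pi(E^-(b))e$. Passing to the ultraweak limit over $\A_0\subseteq\A$ is delicate precisely because $\xi$ is not separating: distinct normal maps may coincide on $\xi$, so one cannot read off the extension from its action on the cyclic vector alone. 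The point is that, in the $-\infty$ direction, the vacuum/tail projection $e$ is captured inside $\pi(\A)$ by the valley factorisation, so the compression $e\,\pi(\A)e$ remains inside $\pi(\ATN)e$ and is order-continuous; in the $+\infty$ direction the shifted variables become peaks and the analogous limit escapes the algebra, which is exactly why $E^+$ need not extend normally. Once the commuting-square inclusion is secured, $\tilde E^-$ is the required normal extension of $E^-$ to all of $\A$.
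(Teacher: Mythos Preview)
Your approach differs substantially from the paper's, and there is a genuine gap. The paper never tries to realise $E^-$ via a commuting square against $\ATN$; instead it uses the \emph{positive} conditional expectation $E^+$ and Theorem~\ref{theorem 1}. For polynomials $a,b,c$ one computes $\phi(aE^-[b]c)=\lim_n\phi(a\,\alpha^{-n}(b)\,c)$; once $n$ is large, every index appearing in $a$ and $c$ exceeds every index appearing in $\alpha^{-n}(b)$, so by the monotone independence with respect to $E^+$ already established one has $E^+[a\,\alpha^{-n}(b)\,c]=E^+[\,E^+[a]\,\alpha^{-n}(b)\,E^+[c]\,]$, whence $\phi(aE^-[b]c)=\phi(E^+[a]\,b\,E^+[c])$. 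The right-hand side is manifestly normal in $b\in\A$, and since $a,c$ range over a dense $*$-subalgebra, non-degeneracy of $\phi$ pins down $E^-[y]$ for every $y\in\A$. The whole argument runs through $E^+$, not through $\ATN$.

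Your key step has the domination backwards and invokes the wrong conditional expectation. When you shift $b$ far to the left, its indices are still \emph{larger} than those of any polynomial approximant to $c,d\in\ATN$; thus $b$ (or its shift) sits at the \emph{peak}, and the monotone factorisation rule would produce $E^+[b]$, not $E^-[b]$. The identity $\phi(c^*bd)=\phi(c^*E^-[b]d)$ for $c,d\in\ATN$ is in fact true, but for a much simpler reason---$\alpha$-invariance of tail elements---not the monotone valley picture you describe. More seriously, your reduction to the inclusion $e\,\pi(\A)\,e\subseteq\pi(\ATN)\,e$ needs the compression $\ATN\ni y\mapsto \pi(y)e$ to be \emph{injective} in order to define $\tilde E^-(a)$ uniquely; this is exactly the step where non-faithfulness of $\phi$ bites, and you have not justified it. Testing only against $c,d\in\ATN$ is not enough: non-degeneracy guarantees an operator is determined by $\phi(a\cdot c)$ for $a,c$ ranging over all of $\A$, which is why the paper tests against arbitrary polynomials and is forced to bring $E^+$ into the picture.
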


We will see that boolean spreadability implies monotone spreadability and anti-monotone spreadability.  Therefore, both $E^+$ and $E^-$ can be extended normally to the whole algebra $\A$. Moreover, for boolean spreadable sequences, $E^+=E^-$ and the two algebras are identical. In summary, we have 
\begin{theorem}\label{theorem 2}
Let $(\A,\phi)$ be a non degenerated $W^*$-probability space and $(x_i)_{i\in\mathbb{Z}}$ be a bilateral infinite sequence of selfadjoint random variables which generate $\A$ as a von Neumann algebra. Then the following are equivalent:
\begin{itemize}
\item[a)] The joint distribution of $(x_i)_{i\in \mathbb{N}}$  is boolean spreadable. 
\item[b)]  The sequence $(x_i)_{i\in\mathbb{Z}}$ is identically distributed and boolean independent with respect to the $\phi-$preserving conditional expectation $E$ onto the non unital tail algebra of the $(x_i)_{i\in\mathbb{Z}}$
\end{itemize}
\end{theorem}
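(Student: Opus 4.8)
The plan is to bootstrap the boolean case from the monotone theorem already in hand. The first thing I would prove is that boolean spreadability is stronger than both monotone and anti-monotone spreadability: the universal relations defining the boolean quantum increasing sequences contain those defining their monotone and anti-monotone counterparts (the boolean invariant-projection condition specializes to the monotone unit-partition type relations), so any boolean-spreadable joint distribution is automatically monotone- and anti-monotone-spreadable. Applying Theorem~\ref{theorem 1} and its left--right reflection then produces, for every $k\in\mathbb{Z}$, two families of $\phi$-preserving conditional expectations: a forward family $E^+_k\colon\A^+_k\to\ATP$ arising from shifting indices toward $+\infty$, with respect to which $(x_i)_{i\geq k}$ is monotone and identically distributed, and a backward family $E^-_k$ onto $\ATN$ arising from shifting toward $-\infty$, with respect to which the sequence is anti-monotone and identically distributed.

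For the implication $a)\Rightarrow b)$ it then remains to collapse these two one-sided structures into a single boolean one. By the preceding Proposition, $E^-$ extends normally to all of $\A$; running the same argument on the reflected sequence shows that $E^+$ extends normally as well, so both $\ATP$ and $\ATN$ are ranges of genuine normal conditional expectations of $\A$. I would next show $\ATP=\ATN=:\AT$ and $E^+=E^-=:E$. Here the decisive input is that boolean independence, unlike monotone independence, is symmetric in the order of its arguments, so the forward and backward shifts act compatibly: I would check that $E^+$ and $E^-$ agree on every word $x_{j_1}\cdots x_{j_m}$ in the generators --- computing one side by monotone and the other by anti-monotone independence together with identical distribution --- and then conclude $E^+=E^-$ from normality, since the $x_i$ generate $\A$. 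Finally, to upgrade the two one-sided independences into genuine boolean independence over $E$, I would invoke the interplay between the three independences in the spirit of Franz: a family that is simultaneously monotone independent with respect to $E$ and anti-monotone independent with respect to the same $E$ satisfies the boolean factorization $E(a_1\cdots a_n)=E(a_1)\cdots E(a_n)$ on alternating words with distinct neighbouring indices, which is exactly boolean independence over the tail algebra.

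For $b)\Rightarrow a)$, assume $(x_i)$ is identically distributed and boolean independent over the $\phi$-preserving conditional expectation $E$ onto $\AT$, and verify the invariance defining boolean spreadability directly on joint moments. Given a word $x_{j_1}\cdots x_{j_m}$, I would group maximal runs of equal indices into alternating blocks, apply the boolean factorization over $E$ together with $\phi\circ E=\phi$ to write $\phi(x_{j_1}\cdots x_{j_m})$ as $\phi$ of a product of the operator-valued block moments, and then use identical distribution to observe that each such factor depends only on the block rather than on its index value. Consequently the moment depends only on the pattern of coincidences among $j_1,\dots,j_m$; since any order-preserving reindexing preserves this pattern, all joint moments are invariant under the coactions defining the boolean increasing sequence spaces, i.e. the distribution is boolean spreadable.

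I expect the main obstacle to be the collapse step $E^+=E^-$ with equality of the two tail algebras. The monotone theory is specifically designed to yield genuinely different conditional expectations at the two infinities, so the argument must locate exactly where the extra symmetry of boolean independence forces $\ATP$ and $\ATN$ to coincide, and must supply the normal extension of $E^+$ by hand (the Proposition only furnishes it for $E^-$) rather than assuming it. The subsidiary step showing that simultaneous monotone and anti-monotone independence produces the boolean factorization is also delicate, as it is there that the combinatorics of alternating words and the vanishing of the relevant mixed moments must be controlled.
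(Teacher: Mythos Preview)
Your plan is essentially the paper's own argument for $a)\Rightarrow b)$: reduce boolean spreadability to monotone and anti-monotone spreadability, invoke Theorem~\ref{theorem 1} in both directions to get $E^+$ and $E^-$, show they coincide and then read off the boolean factorization from the simultaneous monotone and anti-monotone structure. Two implementation details differ. First, the paper does not argue anti-monotone spreadability directly from universal relations; instead it proves a reversal lemma (Lemma~\ref{reverse}) showing that if $(x_i)_{i\in\mathbb{Z}}$ is boolean spreadable then so is $(x_{-i})_{i\in\mathbb{Z}}$, and then applies Theorem~\ref{theorem 1} to the reversed sequence. Second, and more importantly for the step you correctly flag as the obstacle, the paper does not compare $E^+$ and $E^-$ by evaluating each on words and matching the results: that would be circular, since the recursive monotone and anti-monotone factorizations land in $\ATP$ and $\ATN$ respectively and you do not yet know these coincide. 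Instead the paper tests against $\phi$, showing $\phi(aE^-[b]c)=\phi(aE^+[b]c)$ for all $a,b,c$ in the algebra generated by the $x_i$ (Proposition~\ref{equal tail algebra}), using the identity $\phi(aE^-[b]c)=\phi(E^+[a]\,b\,E^+[c])$ from Proposition~\ref{extention1} together with the operator-valued monotone independence for both the original and reversed sequence; non-degeneracy of $\phi$ then gives $E^+=E^-$ and a posteriori $\ATP=\ATN$.

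For $b)\Rightarrow a)$ you take a genuinely different route: the paper simply quotes that operator-valued boolean independence with identical distribution implies boolean exchangeability (from \cite{Liu}) and then uses the implication diagram of Section~4 to pass to boolean spreadability, whereas you verify the $B_i(n,k)$-invariance on moments by hand via the boolean factorization. Both work; the paper's is shorter by relying on prior results, yours is self-contained.
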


The paper is organized as follows:  In section 2, we will introduce preliminaries and notation from noncommutative probability and recall Wang's quantum permutation groups and boolean quantum semigroups. In section 3, we briefly review distributional symmetries for finite sequences of random variables in classical probability and we restate these symmetries in  words of quantum maps. Then, we introduce noncommutative versions of these symmetries and their quantum maps.  In the end of this section, we will define quantum spreadability, monotone spreadability and boolean spreadability for bilateral infinite sequences of random variables.  In section 4, we will study simple relations between our noncommutative symmetries.  In particular, we will show boolean exchangeability is strictly stronger than boolean spreadability. Therefore, operator-valued boolean independent random variables are boolean spreadable.  In section 5, we will introduce an equivalence relation on the set of sequences of indices. With the help of the equivalence relation, we will show that  operator-valued monotone independent sequences of random variables are monotonically spreadable. In section 6, we first provide an example that a monotonically spreadable unilateral sequence of bounded random variables is unbounded.  Therefore, we cannot define conditional expectation for unilateral spreadable sequences via shifts in a $W^*$-probability space with a non-degenerated normal state. Then we will turn to study  bilateral sequences of random variables.  We will introduce tail algebras associated with positive infinity and negative infinity and study elementary properties of conditional expectations associated with the two tail algebras. In section 7, we will study properties of conditional expectations under the assumption that our bilateral sequences are monotonically spreadable. In section 8, we will prove a Ryll-Nardzewski type theorem for Monotone independence. In section 9, we will prove a Ryll-Nardzewski type theorem for boolean independence.

\section{Preliminaries and examples}

We recall some necessary definitions and notions from noncommutative probability. For further details, see contexts \cite{KS}, \cite{NS}, \cite{VDN}, \cite{Po}.
\begin{definition}\normalfont  A non-commutative probability space $(\A,\phi)$ consists of a unital algebra $\A$ and a linear functional $\phi:\A\rightarrow \C$ such that $\phi(1_{\A})=1$. $(\A,\phi)$ is  called a $*$-probability space if $\A$ is a $*$-algebra and $\phi(xx^*)\geq 0$ for all $x\in\A$. $(\A,\phi)$ is called a  $W^*$- probability space if $\A$ is a $W^*$-algebra and $\phi$ is a normal state on it.  We will not assume that $\phi$ is faithful.  The elements of $\A$ are called  random variables. Let $x\in\A$ be a random variable, then its distribution is a linear functional $\mu_x$ on $\C[X]$( the algebra of complex polynomials in one variable), defined by $\mu_x(P)=\phi(P(x))$.
\end{definition}

\begin{definition} Let  $\A$ be a  $W^*$-algebra, a normal state $\phi$ on $\A$ is said to be non-degenerated if  $x=0$ whenever $\phi(axb)=0$ for all $a,b\in \A.$  
\end{definition}

 By proposition 7.1.15 in \cite{KR},  if $\phi$ is a non-degenerated normal state on $\A$ then the GNS representation associated to $\phi$ is faithful. In this paper, we will work with $W^*$-probability space with  a non-degenerated normal state. The reason is that there is no non-trivial pair of boolean or monotonically independent random variables in $W^*$-probability spaces with faithful states. See \cite{Liu}.

\begin{definition}\normalfont Let $I$ be an index set. The algebra of noncommutative polynomials in $|I|$ variables, $\C\langle X_i|i\in I\rangle$, is the  linear span of $1$ and noncommutative monomials of the form $X^{k_1}_{i_1}X^{k_2}_{i_2}\cdots X^{k_n}_{i_n}$ with $i_1\neq i_2\neq\cdots \neq i_n\in I$ and all $k_j$'s are positive integers. For convenience, we will denote by $\C\langle X_i|i\in I \rangle_0$  the set of noncommutative polynomials without a constant term.
Let $(x_i)_{i\in I}$ be a family of random variables in a noncommutative probability space $(\A,\phi)$. Their  joint distribution is the linear functional $\mu:\C\langle X_i|i\in I \rangle\rightarrow \C$ defined by
$$\mu(X^{k_1}_{i_1}X^{k_2}_{i_2}\cdots X^{k_n}_{i_n})=\phi(x^{k_1}_{i_1}x^{k_2}_{i_2}\cdots x^{k_n}_{i_n}),$$
and $\mu(1)=1$.
\end{definition}

In general, the joint distribution depends on the order of the random variables, e.g 
$ \mu_{x,y}$ may not equal $\mu_{y,x}$. According to our notation, $\mu_{x,y}(X_1X_2)=\phi(xy)$, but $\mu_{y,x}(X_1X_2)=\phi(yx)$.  In this paper, our index set $I$ is always an ordered set with order \lq\lq$>$ \rq\rq e.g. $\mathbb{N}$, $\mathbb{Z}$.

\begin{definition}\normalfont Let  $(\A,\phi)$ be a noncommutative probability space. A family of (not necessarily unital) subalgebras $\{\A_i|i\in I \}$ of $\A$ is said to be boolean independent if
$$\phi(x_1x_2\cdots x_n)= \phi(x_1)\phi(x_2)\cdots \phi(x_n)$$
whenever $x_k\in\A_{i(k)}$ with $i(1)\neq i(2)\neq\cdots \neq i(n)$. The family of subalgebras $\{\A_i|i\in I \}$  is said to be monotonically independent if 
$$\phi(x_1\cdots x_{k-1}x_kx_{k+1}\cdots x_n)=\phi(x_k)\phi(x_1\cdots x_{k-1}x_{k+1}\cdots x_n)$$
whenever $x_j\in\A_{i_j}$ with $i_1\neq i_2\neq\cdots \neq i_n$ and $i_{k-1}<i_k>i_{k+1}$. A set of random variables $\{x_i\in\A|i\in I\}$ is said to be boolean(monotonically) independent if the family of non-unital subalgebras $\A_i$, which are generated by $x_i$ respectively, is boolean(monotonically) independent.
\end{definition}
One refers to \cite{Fr} for more details of boolean product and monotone product of random variables. In general, the framework for boolean independence and monotone independence is a non-unital algebra.  Thereby, we will use the following version of operator valued probability spaces: 
\begin{definition}\normalfont {\bf Operator valued probability space} An operator valued probability space $(\A,\B, E:\A\rightarrow \B)$ consists of an algebra  $\A$, a subalgebra $\B$ of $\A$ and a $\B-\B$ bimodule linear map $E:\A\rightarrow \B$, i.e.
 $$E[b_1ab_2]=b_1E[a]b_2,\,\,\, E[b]=b$$
for all $b_1,b_2,b\in\B$ and $a\in\A$. According to the definition in \cite{St}, we call $E$ a conditional expectation from $\A$ to $\B$ if $E$ is onto, i.e. $E[\A]=\B$. The elements of $\A$ are called random variables.
\end{definition}
\begin{remark} In free probability theory, $\A$ and $\B$ are assumed to be unital and share the same unit
\end{remark}

\begin{definition}\normalfont For an algebra $\B$, we denote by $\B\langle X\rangle$  the algebra which is freely generated by $\B$ and the indeterminant $X$. Let $1_X$ be the identity of $\C\langle X\rangle$, then $\B\langle X\rangle$ is set of linear combinations of the elements in $\B$ and the noncommutative monomials $b_0Xb_1Xb_2\cdots b_{n-1}Xb_n$ where $b_k\in\B\cup \{\C 1_X\}$ and $n\geq 0$. The elements in  $\B\langle X\rangle$ are called $\B$-polynomials. In addition,  $\B\langle X\rangle_0$ denotes the subalgebra of $\B\langle X\rangle$ which does not contain a constant term  in $\B$, i.e. the linear span of the noncommutative monomials $b_0Xb_1Xb_2\cdots b_{n-1}Xb_n$ where $b_k\in\B\cup \{\C 1_X\}$ and $n\geq 1$. 
\end{definition}

Here are the operator valued versions of noncommutative independences:

\begin{definition}\normalfont
Let  $\{x_i\}_{i\in I}$  be a family of random variables in an operator valued probability space $(\A,\B,E:\A\rightarrow \B)$, where $\A$ and $\B$ are not necessarily unital.  $\{x_i\}_{i\in I}$ is  said to be boolean independent over $\B$ if 
$$E[p_1(x_{i_1})p_2(x_{i_2})\cdots p_n(x_{i_n})]=E[p_1(x_{i_1})]E[p_2(x_{i_2})]\cdots E[p_n(x_{i_n})]$$
whenever $i_1,\cdots,i_n\in I$, $i_1\neq i_2\neq\cdots\neq i_n$ and $p_1,\cdots,p_n\in\B\langle X\rangle_0$.\\
$\{x_i\}_{i\in I}$ is  said to be monotonically independent over $\B$ if 
$$\begin{array}{rcl}
&&E[p_1(x_{i_1})\cdots p_{k-1}(x_{i_{k-1}})p_{k}(x_{i_{k}})p_{k+1}(x_{i_{k+1}})\cdots p_n(x_{i_n})]\\&=&E[p_1(x_{i_1})\cdots p_{k-1}(x_{i_{k-1}})E[p_{k}(x_{i_{k}})]p_{k+1}(x_{i_{k+1}})\cdots p_n(x_{i_n})]
\end{array}$$
whenever $i_1,\cdots,i_n\in I$, $i_1\neq i_2\neq\cdots\neq i_n$, $i_{k-1}<i_k>i_{k+1}$ and $p_1,\cdots,p_n\in\B\langle X\rangle_0$.\\
\end{definition}
Notice that there is another natural order \lq\lq$<$ \rq\rq on $I$, i.e. $a<b$ if $b>a$. Therefore, we can define another noncommutative independence relation.  $\{x_i\}_{i\in I}$ is  said to be anti-monotonically independent with respect to $E$ and index order \lq\lq$>$" if  $\{x_i\}_{i\in I}$ is  said to be anti-monotonically independent with respect to $E$ and index order \lq\lq$<$\rq\rq.  See more details in \cite{Mu}.

\subsection{Noncommutative distributional symmetries}
 Recall that, in \cite{Wan}, Wang introduced the following quantum analogue of permutation groups:
\begin{definition} \normalfont $A_s(n)$ is defined as the universal unital $C^*$-algebra generated by elements $(u_{i,j})_{i,j=1,\cdots n}$ such that we have
\begin{itemize}
\item Each $u_{i,j}$ is an orthogonal projection, i.e. $u_{ij}^*=u_{ij}=u_{ij}^2$ for all $i,j=1,\cdots, n$.
\item The elements in each row and column of $u=(u_{ij})_{i,j=1,\cdots,n}^n$ form a partition of unit, i.e. are orthogonal and sum up to 1: for each $i=1,\cdots,n$ and $k\neq l$ we have
$$
\begin{array}{rcl}
u_{ik}u_{il}=0&\text{and}& u_{ki}u_{li}=0;\\
\end{array}.$$
\end{itemize}
and for each $i=1,\cdots, n$ we have
$$\sum\limits_{k=1}^nu_{ik}=1=\sum\limits_{k=1}^nu_{ki}.$$
\end{definition}

$A_s(n)$ is a compact quantum group in sense of Woronowicz \cite{Wo2}, with comultiplication, counit and antipode given by the formulas:
$$\Delta u_{i,j}=\sum\limits_{k=1}^n u_{i,k}\otimes u_{k,j}$$
$$\epsilon (u_{i,j})=\delta_{i,j} $$
$$S(u_{i,j})=u_{j,i}$$
It was shown that the this quantum structure can be used to characterize conditionally free independence \cite{KS}.\\

In \cite{Liu}, we modify the universal conditions of Wang's quantum permutation groups: By replacing the condition associated with partitions of the unit by a condition associated with an invariant projection, we get the following universal algebras:

\noindent{\bf Quantum semigroups ($B_s(n)$, $\Delta$):} The algebra $B_s(n)$ is defined as the universal unital $C^*$-algebra generated by elements $u_{i,j}$ $(i,j=1,\cdots n)$ and a projection ${\bf P}$ such that we have
\begin{itemize}
\item each $u_{i,j}$ is an orthogonal projection, i.e. $u_{i,j}^*=u_{i,j}=u_{i,j}^2$ for all $i,j=1,\cdots, n$.
\item $$
\begin{array}{rcl}
u_{i,k}u_{i,l}=0&\text{and}& u_{k,i}u_{l,i}=0\\
\end{array}
$$
whenever $k\neq l$.
\item For all $1\leq i\leq n$,  ${\bf P}=\sum\limits_{k=1}^{n}u_{k,i}{\bf P}$.
\end{itemize}
There is a natural comultiplication $\Delta:B_s(n)\rightarrow B_s(n)\otimes_{min} B_s(n)$ defined by
$$\Delta u_{i,j}=\sum\limits_{k=1}^n u_{i,k}\otimes u_{k,j},\,\,\, \Delta(\p)=\p\otimes\p,\,\,\, \Delta(I)=I\otimes I,$$
where $\otimes_{min}$ stands for the reduced $C^*$-tensor product. The existence of of these maps is given by the universal property of $B_s(n)$.  Therefore, $(B_s(n),\Delta)$'s are  quantum semigroups in sense of So{\l}tan \cite{So}. These quantum structures can characterize conditionally boolean independence, see more details in \cite{Liu}.

\section{Distributional symmetries for finite sequences of random variables}
In this section, we will review  two kinds of distributional symmetries which are spreadability and partial exchangeability, in classical probability.  In \cite{Ka}, we see that  the distributional symmetries can be defined for either finite sequences or infinite sequences.   Moreover,  each kind of  distributional symmetry for  infinite sequences of random objects is  determined by  distributional symmetries on all its finite subsequences.  For example, an infinite sequence of random variables is exchangeable iff all its finite subsequences are exchangeable.  We will present distributional symmetries for finite sequences and  then introduce their counterparts in noncommutative probability.   In the first subsection, we recall  notions of  spreadability and partial exchangeability  in classical probability and rephrase these notions in words of quantum maps.  In the second subsection, we will introduce  counterparts of spreadability and partial exchangeability in noncommutative probability.  Even though there are many interesting properties of partial exchangeability, we are not going to study it too much in this paper because the main problem we concern is about extended de Finetti  type theorems for  noncommutative spreadable sequences of random variables.   We will discuss relations between those noncommutative distributional symmetries in the next section.

\subsection{Spreadability and partial exchangeability}

Recall that in \cite{Ka1}, a finite sequence of random variables $(x_1,...,x_n)$ is said to be spreadable if for any $k<n$, we have 
\begin{equation}\label{1}
(x_1,...,x_k)\overset{d}{=}(x_{l_1},...,x_{l_k}),\,\,\,\, l_1<l_2<\cdots<l_k
\end{equation}
For fixed natural numbers $n>k$, it is mentioned in \cite{Cu3},  the above relation can be described in  words of  quantum family of maps in  sense of Soltan \cite{So1}:  Considering the space $I_{k,n}$ of increasing sequences $\I=(1\leq i_1<\cdots<i_k\leq n)$. For $1\leq i\leq n$,  $1\leq j\leq k$, define $f_{i,j}:I_{k,n}\rightarrow \C$ by:
$$f_{i,j}(\I)=\left\{\begin{array}{cc}
1, & i_j=i\\
0,&i_j\neq i
\end{array}\right..
$$ 
If we consider $I_{n,k}$ as a discrete space, then the functions $f_{i,j}$ generate $C(I_{n,k})$ by the Stone-Weierstrass theorem.  Let $\C[X_1,...,X_m]$ be the set of commutative polynomials in $m$ variables. The algebra $C(I_{n,k})$ together with an algebraic  homomorphism $\alpha:\C[X_1,...,X_k]\rightarrow \C[X_1,...,X_n]\otimes C(I_{k,n})$ define by:
$$\alpha: X_{j}=\sum\limits_{i=1}^n X_i\otimes f_{i,j}, \,\,\,\, \alpha(1)=1_{C(I_{k,n})} $$
defines a quantum family of maps from $\{1,...,k\}$ to $\{1,...,n\}$.

We can use this family of quantum maps to rephrase  equation (\ref{1}):  Let $\mu_{x_1,....,x_n}$ be the joint distribution of $(x_1,...,x_n)$. For fixed natural numbers $n>k$,  
\begin{equation}\label{3}
\mu_{x_1,...,x_k}(p)1_{C(I_{n,k})}=\mu_{x_1,...,x_n}\otimes id_{C(I_{n,k})}(\alpha(p))
\end{equation}
for all $p\in\C[x_1,...,x_k]$.\\
For completeness, we provide a sketch of proof here:
Suppose equation (\ref{1}) holds. Let $p=X_{j_1}^{i_1}\cdots X_{j_m}^{i_m}$ be a monomial in $\C[X_1,...,X_k]$ such that $1\leq j_1<j_2<\cdots <j_m\leq k$ and $i_1,...,i_m$ are positive integers.  Let $\I=(1\leq l_1<\cdots<l_k\leq n)$ be a point in 
$I_{k,n}$.  Then, the $\I$-th component of $\mu_{x_1,...,x_k}(p)1_{C(I_{n,k})}$ is $E[x_{j_1}^{i_1}\cdots x_{j_m}^{i_m}]$.  The $\I$-th component$\mu_{x_1,...,x_n}\otimes id_{C(I_{n,k})}(\alpha(p))$ is 
$$
\sum\limits_{s_1,...,s_m=1}^n E[x_{s_1}^{i_1}\cdots x_{s_m}^{i_m}](f_{s_1,j_1}\cdots f_{s_m,j_m})(\I).
$$
According the definition of $f_{i,j}$, $(f_{s_1,j_1}\cdots f_{s_m,j_m})(\I)$ is not vanished only if $s_t=l_{j_t}$ for all $1\leq t\leq m$. Therefore, 
$$
\sum\limits_{s_1,...,s_m=1}^n E[x_{s_1}^{i_1}\cdots x_{s_l}^{i_l}](f_{s_1,j_1}\cdots f_{s_m,j_m})(\I)=E[x_{l_{j_1}}^{i_1}\cdots x_{l_{j_m}}^{i_m}].
$$
Since $1\leq j_1<j_2<\cdots <j_m\leq k$ and $\I$ is an in creasing sequence, we have $1\leq l_{j_1}<\cdots<l_{j_{m}}\leq n$. Hence, the $\I$-components of the  two sides of equation (\ref{3}) are equal to each other. Since $\I$ is arbitrary, equation (\ref{3}) holds.  By checking the $\I$ component of equation \ref{3}, we can also show that (\ref{3}) implies (\ref{1}).  We will say that  $(\xi_1,...,\xi_n)$ is $(n,k)$-spreadable if $(x_1,...,x_n)$ satisfies  equation (\ref{3}).

\begin{remark}\normalfont
We see that the above $(n,k)$-spreadability describes limited relations between the mixed moments of $(x_1,...,x_n)$. Once we fix $n,k$, the $(n,k)$-spreadability  gives no information about mixed moments which involve $k+1$ variables. For example, let $n=4$, $k=2$ and assume that $(x_1,...,x_4)$ is a $(4,2)$-spreadable sequence. According to equation (\ref{1}), we  know nothing  about the relation between $E[x_1 x_2x_3]$ and $E[x_2x_3x_4]$.  We will call this  kind of distributional symmetries  partial symmetries because they just provide information of  part of mixed moments but not all.
\end{remark}

By using the idea of partial symmetries, we can define another family of distributional symmetries which is stronger than $(n,k)$-spreadability but weaker than exchangeability.  
\begin{definition}
For fixed natural numbers $n>k$, we say a sequence of random variables $(x_1,...,x_n)$ is $(n,k)$-exchangeable if 
$$(x_1,...,x_k)\overset{d}{=}(x_{\sigma(1)},...,x_{\sigma(k)}),\,\,\,\, \forall \sigma\in S_n,$$
where $S_n$ is the permutation group of n elements.
\end{definition}
This kind of exchangeability is called partial exchangeability. For more details, see \cite{DF}. As well as $(n,k)$-spreadability, we can rephrase partial exchangeability in words of quantum family of maps:  Considering the space $E_{n,k}$ of  length $k$ sequences $\{\I=(i_1,...,i_k)| 1\leq i_1,..., i_k\leq n, i_j\neq i_{j'}\,\, \text{for}\,\, j\neq j'\}$. For $1\leq i\leq n$,  $1\leq j\leq k$, define $g_{i,j}:I_{n,k}\rightarrow \C$ by:
$$g_{i,j}(\I)=\left\{\begin{array}{cc}
1, & i_j=i,\\
0,&i_j\neq i.
\end{array}\right.
$$ 
Given two different sequences $\I=(i_1,...,i_k)$ and  $\I'=(i'_1,...,i'_{k})$,  there must exists a number $j$ such that $i_j\neq i'_j$. Then, we have that $g_{i,i_j}(\I)=1\neq 0=g_{i,i_j}(\I)$. Therefore, the set of functions $\{g_{i,j}|i=1,...,n;j=1,...,k\}$ separates $E_{n,k}$.  According to Stone Weierstrass theorem, the functions $g_{i,j}$  generate $C(E_{n,k})$. Again, we can define a homomorphism $\alpha':\C[X_1,...,X_k]\rightarrow \C[X_1,...,X_n]\otimes C(E_{n,k})$ by the following formulas:
$$\alpha': X_{j}=\sum\limits_{i=1}^n X_i\otimes g_{i,j}, \,\,\,\, \alpha'(1)=1_{C(I_{k,n})}. $$
\begin{lemma}
Let $\mu_{x_1,....,x_n}$ be the joint distribution of $x_1,...,x_n$.  Then 
$$\mu_{x_1,...,x_k}(p)1_{C(I_{n,k})}=\mu_{x_1,...,x_n}\otimes id_{C(I_{n,k})}(\alpha(p))$$
for all $p\in\C[X_1,...,X_k]$
if and only if $x_1,...,x_n$ is $(n,k)$ exchangeable.
\end{lemma}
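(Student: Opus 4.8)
The plan is to mirror the computation already carried out for $(n,k)$-spreadability, replacing the space $I_{k,n}$ of increasing sequences by the space $E_{n,k}$ of injective length-$k$ sequences and the functions $f_{i,j}$ by the functions $g_{i,j}$; the only genuinely new ingredient is the bookkeeping that relates injections $\{1,\dots,k\}\to\{1,\dots,n\}$ to restrictions of permutations in $S_n$. Throughout I work with the homomorphism $\alpha'$ and with $C(E_{n,k})$ in place of the symbols $\alpha$ and $C(I_{n,k})$ literally appearing in the statement.

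First I would reduce to monomials. Since both sides of the displayed identity are linear in $p$ and $\alpha'$ is an algebra homomorphism, it suffices to verify the equality on an arbitrary monomial $p=X_{j_1}^{a_1}\cdots X_{j_m}^{a_m}$ with $1\le j_1<\cdots<j_m\le k$ and positive exponents $a_t$. Because $C(E_{n,k})$ is generated by its point evaluations, it further suffices to compare the two sides coordinatewise, i.e. to evaluate at each point $\I=(i_1,\dots,i_k)\in E_{n,k}$.

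Next I would compute the two coordinates. The $\I$-component of the left-hand side is the constant $\mu_{x_1,\dots,x_k}(p)=\phi(x_{j_1}^{a_1}\cdots x_{j_m}^{a_m})$. For the right-hand side, expanding $\alpha'(p)=\sum_{s_1,\dots,s_m} X_{s_1}^{a_1}\cdots X_{s_m}^{a_m}\otimes g_{s_1,j_1}\cdots g_{s_m,j_m}$ (using $g_{i,j}^2=g_{i,j}$), applying $\mu_{x_1,\dots,x_n}\otimes id$, and evaluating at $\I$, the definition of $g_{i,j}$ kills every term except the one with $s_t=i_{j_t}$ for all $t$, leaving $\phi(x_{i_{j_1}}^{a_1}\cdots x_{i_{j_m}}^{a_m})$. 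Thus the identity holds at $\I$ precisely when $\phi(x_{i_{j_1}}^{a_1}\cdots x_{i_{j_m}}^{a_m})=\phi(x_{j_1}^{a_1}\cdots x_{j_m}^{a_m})$, and the displayed identity holds for all $p$ if and only if this equality holds for every monomial and every $\I\in E_{n,k}$, i.e. if and only if $(x_{i_1},\dots,x_{i_k})\overset{d}{=}(x_1,\dots,x_k)$ for every injective $\I$.

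Finally I would translate this into $(n,k)$-exchangeability. The main (and only mildly delicate) point is the observation that the map $\sigma\mapsto(\sigma(1),\dots,\sigma(k))$ sends $S_n$ onto exactly the set $E_{n,k}$ of injective length-$k$ sequences: every injection $\{1,\dots,k\}\to\{1,\dots,n\}$ extends to a permutation, and every permutation restricts to such an injection. Hence the family of equalities $(x_{i_1},\dots,x_{i_k})\overset{d}{=}(x_1,\dots,x_k)$ indexed by $\I\in E_{n,k}$ is the same family as $(x_{\sigma(1)},\dots,x_{\sigma(k)})\overset{d}{=}(x_1,\dots,x_k)$ indexed by $\sigma\in S_n$, which is the definition of $(n,k)$-exchangeability. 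Reading the coordinatewise comparison in one direction gives the forward implication and in the other gives the converse. I expect no real obstacle beyond keeping this index bookkeeping straight and noting that commutativity of $\C[X_1,\dots,X_n]$ makes the order in which the surviving indices $i_{j_1},\dots,i_{j_m}$ appear irrelevant, so that no monotonicity of $\I$ (unlike in the spreadable case) is needed.
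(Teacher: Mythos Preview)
Your proposal is correct and follows exactly the approach the paper indicates: the paper's own proof is the one-line remark ``The proof is similar to the proof of $(n,k)$-spreadability, we just need to check the values at all components of $E_{n,k}$,'' and you have carried out precisely that componentwise verification, together with the small translation between injective sequences and restrictions of permutations. Your observation about the notational slip ($C(E_{n,k})$ and $\alpha'$ rather than $C(I_{n,k})$ and $\alpha$) is also apt.
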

The proof is similar the proof of $(n,k)$-spreadability, we just need to check the values at all components of $E_{n,k}$.

\subsection{Noncommutative analogue of partial symmetries} 
Now, we turn to introduce noncommutative versions of spreadability and partial exchangeability.  The pioneering  work was done by Curran \cite{Cu3}.  He defined a quantum version of $C(I_{n,k})$ in analogue of Wang's quantum permutation groups as following: 
\begin{definition}\normalfont
For $k,n\in\mathbb{N}$ with $k\leq n$, the quantum increasing space $A(n,k)$ is the universal unital $C^*-$algebra generated by elements $\{u_{i,j}|1\leq i\leq n, 1\leq j\leq k\}$ such that
\begin{itemize}
\item[1.] Each  $u_{i,j}$ is an orthogonal projection: $u_{i,j}=u_{i,j}^*=u_{i,j}^2$ for all $i=1,...,n;j=1,...,k$.
\item[2.] Each column of the rectangular matrix $u=(u_{i,j})_{i=1,...,n;j=1,...,k}$ forms a partition of unity: for $1\leq j\leq k$ we have $\sum\limits_{i=1}^nu_{i,j}=1$.
\item[3.] Increasing sequence condition: $u_{i,j}u_{i',j'}=0$ if $j<j'$ and $i\geq i'$.
\end{itemize}
\end{definition}

\begin{remark}\normalfont
Our notation is different from Curran's,  we use $A_i(n,k)$ instead of his $A_i(k,n)$ for our convenience. 
\end{remark}

For any natural numbers $k<n$, in analogue of coactions of $A_s(n)$,  there is a unital $*$-homomorphism $\alpha_{n,k}:\C\langle X_1,...,X_k\rangle\rightarrow \C\langle X_1,...,X_n \rangle\otimes A_i(n,k)$ determined by:
$$ \alpha_{n,k}(X_j)=\sum\limits_{i=1}^n X_i\otimes u_{i,j}.$$

The quantum spreadability of random variables is defined as the following:
\begin{definition}\normalfont
Let $(\A,\phi)$ be a noncommutative probability space. A finite ordered sequence of random variables $(x_i)_{i=1,...,n}$ in $\A$ is said to be $A_i(n,k)$-spreadable if their joint distribution $\mu_{x_1,...,x_n}$ satisfies:
   $$\mu_{x_1,...,x_n}(p)1_{A_i(n,k)}=\mu\otimes id_{A_i(n,k)}(\alpha_{n,k}(p)),$$ 
for all $p\in \C\langle X_1,...,X_k\rangle$.  $(x_i)_{i=1,...,n}$ is said to be  quantum spreadable  if $(x_i)_{i=1,...,n}$  is  $A_i(n,k)$-spreadable for all $k=1,...,n-1$.
\end{definition}

\begin{remark}
In \cite{Cu3}, Curran studied sequences of $C^*$-homomorphisms  which are more general than random variables. For consistency, we  state his definitions in words of random variables.    It is a routine to  extend our work to the framework of sequences of $C^*$-homomorphisms.
\end{remark}

Recall that in \cite{Liu}, by replacing the condition associated with partitions of the unity of  Wang's quantum permutation groups, we defined a family of quantum semigroups with invariant projections. With a natural family of  coactions, we defined  invariance conditions which can characterize conditional boolean independence.  Here,  we can modify Curran's quantum increasing spaces in the same way:

\begin{definition}\normalfont
For $k,n\in\mathbb{N}$ with $k\leq n$, the noncommutative increasing space $B_i(k,n)$ is the unital universal  $C^*-$algebra generated by elements $\{\ub_{i,j}|1\leq i\leq n, 1\leq j\leq k\}$ and an invariant projection $\p$ such that
\begin{itemize}
\item[1.] Each $\ub_{i,j}$ is an orthogonal projection:$\ub_{i,j}=(\ub_{i,j})^*=(\ub_{i,j})^2$ for all $i=1,...,n;j=1,...,k$.
\item[2.] For $1\leq j\leq k$ we have $\sum\limits_{i=1}^n\ub_{i,j}\p=\p$.
\item[3.] Increasing sequence condition: $\ub_{i,j}\ub_{i',j'}=0$ if $j<j'$ and $i\geq i'$.
\end{itemize}
\end{definition}

The same as $A_i(n,k) $,  there is a unital $*$-homomorphism $\alpha^{(b)}_{n,k}:\C\langle X_1,...,X_k\rangle\rightarrow \C\langle X_1,...,X_n \rangle\otimes B_i(n,k)$ determined by:
$$ \alpha^{(b)}_{n,k}(x_j)=\sum\limits_{i=1}^n x_i\otimes \ub_{i,j}$$

As  boolean exchangeability defined in \cite{Liu},  we have
\begin{definition}\normalfont
A finite ordered sequence of random variables $(x_i)_{i=1,...,n}$ in $(\A,\phi)$ is said to be $B_i(n,k)$-spreadable if their joint distribution $\mu_{x_1,...,x_n}$ satisfies:
   $$\mu_{x_1,...,x_n}(p)\p=\p\mu\otimes id_{B_i(n,k)}(\alpha^{(b)}_{n,k}(p))\p,$$ 
for all $p\in \C\langle X_1,...,X_k\rangle$.  $(x_i)_{i=1,...,n}$ is said to be  boolean spreadable  if $(x_i)_{i=1,...,n}$  is  $B_i(n,k)$-spreadable for all $k=1,...,n-1$.
\end{definition}
We will see that $B_i(k,n)$ is an  increasing space of  boolean type, because we can derive an extended de Finetti  type theorem for boolean independence.

Recall that, in \cite{Fr1}, Franz showed some relations between free independence, monotone independence and boolean independence  via Bo{\.z}ejko, Marek and Speicher's two-states free products\cite{BS}. We can see that  monotone product is  \lq\lq between" free product and boolean  product.  From this viewpoint of Franz's work, we may hope to define a kind of \lq\lq spreadability\rq\rq for monotone independence by modifying quantum spreadability and boolean spreadability. Notice that there are at least two ways  to get quotient algebras of $B_i(k,n)$'s such that the $\p$-invariance condition of the quotient algebras is equivalent quantum spreadability: 
\begin{itemize}
\item[1.] Require $\p$ to be the unit of the algebra.
\item[2.] Let $P_j=\sum\limits_{i=1}^nu_{i,j}$, require $P_{j'}u_{ij}=u_{ij}P_{j'}$ for all $1\leq j,j'\leq k$ and $1\leq i\leq n$.
\end{itemize}
 To define the monotone increasing spaces,  we  modify the second condition a little:
\begin{definition}\normalfont
For fixed $n,k\in\mathbb{N}$ and $k<n$, a monotone increasing sequence space $M_i(n,k)$ is the universal unital $C^*$-algebra generated by elements $\{\um_{i,j}\}_{i=1,...,n; j=1,...,k}$
\begin{itemize}
\item[1.] Each $u_{i,j}$ is an orthogonal projection;
\item[2.] Monotone condition: Let $P_j=\sum\limits_{i=1}^n\um_{i,j}$, $P_j\um_{i'j'}=u_{i'j'}$ if $j'\leq j.$
\item[3.] $\sum\limits_{i=1}^n\um_{i,j}P_1=P_1$ for all $1\leq j\leq k.$
\item[4.] Increasing condition: $\um_{i,j}\um_{i',j'}=0$ if $j<j'$ and $i\geq i'.$

\end{itemize}
\end{definition}

We see that  $P_1$ plays the role as the invariant projection $\p$ in the boolean case. For consistency, we denote $P_1$ by $\p$.  Then, we can define a  $\p$-invariance condition associated with  $M_i(n,k)$ in analogy with $B_i(n,k) $: For fixed $n,k\in\mathbb{N}$ and $k<n$, there is a unique unital $*$- isomorphism $\am_{n,k}:\C\langle X_1,...,X_k\rangle\rightarrow \C\langle X_1,...,X_n\rangle\otimes M_i(n,k)$ such that
$$\am_{n,k}(X_j)=\sum\limits_{i=1}^n X_i\otimes\um_{i,j}.$$
The existence of such a homomorphism is given by the universality of $\C\langle X_1,...,X_k\rangle$.

\begin{definition}\normalfont
A finite ordered sequence of random variables $(x_i)_{i=1,...,n}$ in $(\A,\phi)$ is said to be $M_i(n,k)$-invariant if their joint distribution $\mu_{x_1,...,x_n}$ satisfies:
   $$\mu_{x_1,...,x_k}(p)\p=\p\mu_{x_1,...,x_n}\otimes id_{M_i(n,k)}(\am_{n,k}(p))\p,$$ 
for all $p\in \C\langle X_1,...,X_k\rangle$. $(x_i)_{i=1,...,n}$ is said to be monotonically spreadable if it is $M_i(n,k)$-invariant for all $k=1,...,n-1.$
\end{definition}
We will see that these invariance conditions can  characterize conditionally Monotone independence in a proper framework.

As remark 2.3 in \cite{Cu3}, a first question to our definitions is whether $A_i(n,k)$, $B_i(n,k)$, $M_i(n,k)$ exist.  In \cite{Cu3},  Curran has showed several nontrivial representations of $A_i(n,k)$.  In the following,  we provide a family of presentations of $A_i(n,k)$, $B_i(n,k)$, $M_i(n,k)$ for $n>k$:
Fix natural numbers $n>k$, let $l_1,...,l_k\in\mathbb{N}$ such that $$l_1+\cdots+l_k=n.$$ We denote by $\hh_{i}$ a $l_i$-dimensional  Hilbert spaces with orthonormal basis $\{e^{(i)}_{j}|j=1,...,l_i\}$. Let $I_{l_i}$ be the unit of  the algebra $B(\hh_{l_i})$, $P_{e^{(l_i)}_j}$ be the one dimensional orthogonal projection onto $\C e^{(l_i)}_j$,  $P_i$ be the one dimensional projection onto $\C\sum\limits_{j}e^{(l_i)}_j$. Consider the following matrix:
$$
     \left(\begin{array}{cccc}
     P_{1,1}   &0             &\cdots    &0\\
     \vdots&   \vdots       &\ddots    &\vdots\\
     P_{1,l_1}  &0            &     \cdots& 0\\
      0             &P_{2,1}   &\cdots      &0\\
     \vdots      & \vdots    &   \ddots   &\vdots\\
     0              &P_{2,l_2}& \cdots  &  0     \\
     0              &0            &\cdots & 0\\
     \vdots      & \vdots   &\ddots    &\vdots \\
     0              &0           & \cdots    &P_{k,1}\\
     \vdots      &\vdots    &\ddots    &\vdots\\
     0              &0            &\cdots    &P_{k,l_k}\\
     
     \end{array}\right).
$$
We see that  the entries of the matrix satisfy the increasing condition of spaces of increasing sequences. By choosing proper projections $P_{i,j}$, we will get representations for our universal algebras:\\
Quantum family of increasing sequences:  For each $1\leq j\leq k$, the algebra generated by $\{P_{e_j^{i}}|i=1,...,l_j\}$ is isomorphic to $C^*(\mathbb{Z}_{l_j})$. The reduced free product $*_{j=1}^k\mathbb{Z}_{l_i}$ is a quotient algebra of $A_i(n,k)$. One can define a $C^*$-homomorphism $\pi$ from $A_i(n,k)$, such that 
$$
\pi(u_{i,j})=\left\{\begin{array}{lcl}
\text{the image of}\,\, P_{e^{(l_i)}_{j'}}\,\,\text{in}\, *_{j=1}^kC^*(\mathbb{Z}_{l_j}) & \text{if} &  0<j'=j-\sum\limits_{l=m}^{i-1}l_m\leq l_i\\
0& \text{if} &  \text{otherwise}\\
\end{array}\right.
$$
Boolean family of increasing sequences: One can define a $C^*$-homomorphism $\pi$ from $B_i(n,k)$ into $B(\bigotimes\limits_{i=1}^k \hh_i)$ such that
$$
\pi(u_{i,j})=\left\{\begin{array}{lcl}
 \bigotimes\limits_{m_1=1}^{i-1} P_{l_{m_1}} \otimes P_{e^{(l_i)}_{j'}}\bigotimes\limits_{m_2=i+1}^{k} P_{l_{m_2}} & \text{if} &  0<j'=j-\sum\limits_{l=m}^{i-1}l_m\leq l_i\\
0& \text{if} &  \text{otherwise}\\
\end{array}\right.
$$
Monotone family of increasing sequences: One can define a $C^*$-homomorphism $\pi$ from $M_i(n,k)$ to $B(\bigotimes\limits_{i=1}^k \hh_i)$
$$
\pi(u_{i,j})=\left\{\begin{array}{lcl}
 \bigotimes\limits_{m_1=1}^{i-1} I_{l_{m_1}} \otimes P_{e^{(l_i)}_{j'}}\bigotimes\limits_{m_2=i+1}^{k} P_{l_{m_2}} & \text{if} &  0<j'=j-\sum\limits_{l=m}^{i-1}l_m\leq l_i\\
0& \text{if} &  \text{otherwise}\\
\end{array}\right.
$$
The existence of these homomorphisms are given by the universal conditions for $A_i(n,k)$, $B_i(n,k)$ and $M_i(n,k)$ respectively. Since the above representation of  $M_i(n,k)$ plays an important role in proving our main theorems, we summarize it as the following proposition:
\begin{proposition}\label{nontrivial representation}  For fixed natural numbers $n>k$. Let $l_1,...,l_k\in\mathbb{N}$ such that $l_1+\cdots+l_k=n.$ Let $\hh_{i}$ be a $l_i$-dimensional  Hilbert spaces with orthonormal basis $\{e^{(i)}_{j}|j=1,...,l_i\}$ and $I_{l_i}$ be the unit of  the algebra $B(\hh_{l_i})$, $P_{e^{(l_i)}_j}$ be the one dimensional orthogonal projection onto $\C e^{(l_i)}_j$,  $P_i$ be the one dimensional projection onto $\C\sum\limits_{j}e^{(l_i)}_j$. Then, there is a $C^*$-homomorphism $\pi:M_i(n,k)\rightarrow B(\hh_1\otimes\cdots\otimes\hh_k)$ defined as follows:
$$
\pi(u_{i,j})=\left\{\begin{array}{lcl}
 \bigotimes\limits_{m_1=1}^{i-1} I_{l_{m_1}} \otimes P_{e^{(l_i)}_{j'}}\bigotimes\limits_{m_2=i+1}^{k} P_{l_{m_2}} & \text{if} &  0<j'=j-\sum\limits_{l=m}^{i-1}l_m\leq l_i\\
0& \text{if} &  \text{otherwise}\\
\end{array}\right.
$$
\end{proposition}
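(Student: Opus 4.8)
The plan is to read off the conclusion from the universal property of $M_i(n,k)$. Since $M_i(n,k)$ is by definition the universal unital $C^*$-algebra generated by projections subject to the four relations in its definition, producing a unital $*$-homomorphism $\pi\colon M_i(n,k)\to B(\hh_1\otimes\cdots\otimes\hh_k)$ amounts to exhibiting operators $\pi(u_{i,j})$ in $B(\hh_1\otimes\cdots\otimes\hh_k)$ that satisfy those same relations; the map is then unique and automatically unital. Thus the whole proof collapses to a finite-dimensional relation check, and no analytic input is needed.

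First I would fix the index bookkeeping. The $n$ rows split into $k$ consecutive blocks of sizes $l_1,\dots,l_k$; for a row $i$ let $b(i)$ denote its block and $r(i)=i-(l_1+\cdots+l_{b(i)-1})\in\{1,\dots,l_{b(i)}\}$ its position inside that block. In this language the proposed operator is nonzero exactly when $b(i)=j$, and then
$$\pi(u_{i,j})=I_{l_1}\otimes\cdots\otimes I_{l_{j-1}}\otimes P_{e^{(j)}_{r(i)}}\otimes P_{l_{j+1}}\otimes\cdots\otimes P_{l_k},$$
that is, the identity on every leg to the left of the active block $j$, a rank-one coordinate projection on leg $j$, and the diagonal projection $P_{l_m}$ on every leg $m>j$.

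Relation (1) is immediate, since each $\pi(u_{i,j})$ is either $0$ or a tensor product of orthogonal projections. The engine for the remaining relations is the column sum $P_j=\sum_{i=1}^n\pi(u_{i,j})$: letting $i$ range over block $j$ makes $r(i)$ range over $1,\dots,l_j$, and $\sum_{s=1}^{l_j}P_{e^{(j)}_s}=I_{l_j}$, so
$$P_j=I_{l_1}\otimes\cdots\otimes I_{l_j}\otimes P_{l_{j+1}}\otimes\cdots\otimes P_{l_k},\qquad \p=P_1=I_{l_1}\otimes P_{l_2}\otimes\cdots\otimes P_{l_k}.$$
From these explicit forms the rest is a leg-by-leg comparison. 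For the monotone relation (2) with $j'\le j$, the first $j$ legs of $P_j$ are identities, so $P_j$ fixes the first $j'$ legs of $\pi(u_{i',j'})$ and reproduces its trailing factors through $I_{l_m}P_{l_m}=P_{l_m}$ and $P_{l_m}P_{l_m}=P_{l_m}$, yielding $P_j\pi(u_{i',j'})=\pi(u_{i',j'})$; relation (3) is the same computation, giving $P_jP_1=P_1$. For the increasing relation (4), if $j<j'$ and both $\pi(u_{i,j})$ and $\pi(u_{i',j'})$ are nonzero then $b(i)=j<j'=b(i')$ forces $i\le l_1+\cdots+l_j<i'$, contradicting $i\ge i'$; hence one of the two factors already vanishes and the product is $0$.

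I expect no real obstacle here: the argument is a routine verification, and the only thing demanding care is the bookkeeping $i\mapsto(b(i),r(i))$ together with tracking which tensor leg each factor occupies. The one conceptually substantive point is relation (2): it holds precisely because the legs to the left of the active block carry the identity $I_{l_m}$ rather than a rank-one projection, which is exactly the feature that distinguishes the monotone increasing space $M_i(n,k)$ from its boolean counterpart $B_i(n,k)$ and which makes $P_j=I^{\otimes j}\otimes P^{\otimes(k-j)}$ absorb $\pi(u_{i',j'})$ for every $j'\le j$.
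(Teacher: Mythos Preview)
Your proposal is correct and follows the same approach as the paper: the paper simply asserts that ``the existence of these homomorphisms are given by the universal conditions for $A_i(n,k)$, $B_i(n,k)$ and $M_i(n,k)$ respectively'' and states the proposition without further argument, whereas you carry out the verification of the four defining relations explicitly. Your block-index bookkeeping $i\mapsto(b(i),r(i))$ also untangles the somewhat garbled indexing in the displayed formula, and your leg-by-leg check of relations (2)--(4) is exactly the routine computation the paper leaves implicit.
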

Also, we need the following property in the future:

\begin{lemma}\label{extension}  Given natural numbers $n_1, n_2, n,k\in\mathbb{N}$ such that $n>k$. Let $(u_{i,j})_{i=1,...,n;j=1,..., k}$ be the standard generators of $M_i(n,k)$ and  $(u'_{i,j})_{i=1,...,n+n_1+n_2;j=1,..., k+n_1+n_2}$ be the standard generators of $M_i(n+n_1+n_2,k+n_1+n_2)$. Then, there exists a $C^*$-homomorphism $\pi:M_i(n+n_1+n_2,k+n_1+n_2)\rightarrow M_i(n,k)$ such that 
 $$
\pi(u'_{i,j})=\left\{\begin{array}{ccc}
\delta_{i,j}\p &  \text{if}   & 1\leq i\leq n_1 \\
\delta_{i,j}\ &  \text{if}   & n_1+1\leq i\leq  n+n_1,\,\,  n_1 \leq j\leq n_1+k \\
0 &  \text{if}   & n_1+1\leq i\leq  n+n_1,\,\,  j\leq n_1\,\, \text{or}\,\, j>n_1+k  \\
\delta_{i-n,j-k}I &  \text{if}   & i\geq n+n_1+1 \\
\end{array}\right.
$$
where $\p=P_1=\sum\limits_{i=1}^nu_{i,1}$ and $I$ is the identity of $M_i(n,k)$.
\end{lemma}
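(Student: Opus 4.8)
The plan is to invoke the universal property of $M_i(n+n_1+n_2,k+n_1+n_2)$: a $C^*$-homomorphism $\pi$ into $M_i(n,k)$ taking the prescribed values on the generators $u'_{i,j}$ exists if and only if the proposed images $\pi(u'_{i,j})\in M_i(n,k)$ satisfy the four defining relations (projection, monotone, $\p$-invariance, increasing) of $M_i(n+n_1+n_2,k+n_1+n_2)$. So the entire proof reduces to checking these four families of relations for the explicit elements in the statement, which I would organize by the three row-blocks: the top block $1\le i\le n_1$ (value $\delta_{i,j}\p$), the middle block $n_1+1\le i\le n+n_1$ (value $u_{i-n_1,j-n_1}$ on the middle columns and $0$ elsewhere), and the bottom block $n+n_1+1\le i$ (value $\delta_{i-n,j-k}I$).

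Before touching the relations I would record two facts inside $M_i(n,k)$ for repeated use. First, each column sum $P_c=\sum_{a=1}^n u_{a,c}$ is a projection: it is self-adjoint since the $u_{a,c}$ are, and taking $j'=j=c$ in the monotone relation gives $P_c u_{a,c}=u_{a,c}$, whence $P_c^2=P_c$ after summing over $a$; in particular $\p=P_1$ is a projection. Second, I would compute the column sums $P'_j:=\sum_i\pi(u'_{i,j})$ of the images and find $P'_j=\p$ for the left columns $1\le j\le n_1$, $P'_j=P_{j-n_1}$ for the middle columns $n_1+1\le j\le n_1+k$, and $P'_j=I$ for the right columns $n_1+k+1\le j$, where in each case only one of the three blocks contributes. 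In particular $P'_1=\p$.

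With these in hand the projection relation is immediate, since $\p$, the $u_{i-n_1,j-n_1}$, $I$ and $0$ are all projections, and the monotone and $\p$-invariance relations follow from the corresponding relations of $M_i(n,k)$. Concretely, to verify $P'_j\,\pi(u'_{i',j'})=\pi(u'_{i',j'})$ for $j'\le j$ I would split on the value of $\pi(u'_{i',j'})$: when it is $\p$ one uses $P_c P_1=P_1$ (the $\p$-invariance relation of $M_i(n,k)$); when it is $u_{a',b'}$ one uses $P_c u_{a',b'}=u_{a',b'}$ for $b'\le c$ (the monotone relation); and when it is $I$ or $0$ the identity is trivial. The $\p$-invariance relation $P'_jP'_1=P'_1$ of the big algebra is exactly the case $j'=1$ of this computation, since $P'_1=\p$.

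The main work, and the step I expect to be most delicate, is the increasing relation $\pi(u'_{i,j})\pi(u'_{i',j'})=0$ whenever $j<j'$ and $i\ge i'$. Here I would run a case analysis over the nine pairings of the row-blocks of $(i,j)$ and $(i',j')$, exploiting that the nonzero images of the three blocks occupy disjoint, increasingly ordered sets of columns (left, middle, right) matched to increasingly ordered sets of rows (top, middle, bottom). In every pairing except middle–middle the two hypotheses $j<j'$ and $i\ge i'$ are incompatible: across distinct blocks the block with smaller column index also has smaller row index, while within the top block (resp. bottom block) the nonzero images sit on the diagonal $j=i$ (resp. the shifted diagonal $j=i-n+k$), so again $j<j'$ forces $i<i'$. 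The only genuine case is middle–middle, where after the shift $a=i-n_1$, $b=j-n_1$ the requirement becomes precisely the increasing relation $u_{a,b}u_{a',b'}=0$ for $b<b'$, $a\ge a'$ of $M_i(n,k)$. Once all four relation families are verified, universality produces $\pi$; along the way one checks that the off-by-one conventions in the ranges of $j$ are consistent and that the degenerate values $n_1=0$ or $n_2=0$, for which a block is empty, cause no difficulty.
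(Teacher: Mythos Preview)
Your proposal is correct and follows exactly the same approach as the paper: invoke the universal property of $M_i(n+n_1+n_2,k+n_1+n_2)$ and verify that the prescribed images satisfy the four defining relations. The paper's proof simply displays the block-matrix form of $(\pi(u'_{i,j}))$ and asserts that the universal conditions are \lq\lq easy to check\rq\rq, whereas you actually carry out the verification in detail (including the useful computation of the column sums $P'_j$ and the nine-case analysis for the increasing relation); so your argument is a fleshed-out version of the same proof rather than a different one.
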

\begin{proof}
We can see that the matrix form of $(\pi(u'_{i,j}))_{i=1,...,n+n_1+n_2;j=1,..., k+n_1+n_2}$ is 
$$
     \left(\begin{array}{cccccccccc}
     \p &\cdots   &0 & 0&\cdots &0 &0 &\cdots &0 \\
       \vdots   &\ddots  &\vdots &\vdots &\ddots &\vdots &\vdots &\ddots &\vdots \\
0 &\cdots   &\p & 0&\cdots &0 &0 &\cdots &0 \\
     0&\cdots   &0 & u_{1,1}&\cdots &u_{1,k} &0 &\cdots &0 \\
     \vdots   &\ddots  &\vdots &\vdots &\ddots &\vdots &\vdots &\ddots &\vdots \\
     0&\cdots   &0 & u_{n,1}&\cdots &u_{n,k} &0 &\cdots &0 \\
     0 &\cdots   &0 & 0&\cdots &0 &I &\cdots &0 \\
     \vdots   &\ddots  &\vdots &\vdots &\ddots &\vdots &\vdots &\ddots &\vdots \\
     0 &\cdots   &0 & 0&\cdots &0 &0 &\cdots &I \\    
      \end{array}\right)
$$
It is easy to check that the coordinates of the above matrix satisfy the universal conditions of $M_i(n+n_1+n_2,k+n_1+n_2)$. The proof is complete.
\end{proof}
In analogue of  the $(n,k)$-partial exchangeability, we can define  noncommutative versions of partial exchangeability for free independence and boolean independence:

\begin{definition}\normalfont
For $k,n\in\mathbb{N}$ with $k\leq n$, the quantum space $A_l(n,k)$ is the universal unital $C^*$-algebra generated by elements $\{u_{ij}|1\leq i\leq n, 1\leq j\leq k\}$ such that
\begin{itemize}
\item[1.] Each  $u_{ij}$ is an orthogonal projection:$u_{ij}=u_{ij}^*=u_{ij}^2$.
\item[2.] Each column of the rectangular matrix $u=(u_{ij})$ forms a partition of unity: for $1\leq j\leq k$ we have $\sum\limits_{i=1}^nu_{ij}=1$.
\end{itemize}
\end{definition}

\begin{remark} $A_i(n,k)$ is a quotient algebra of $A_l(n,k)$, because the definition of $A_i(n,k)$ has one more restriction than $A_l(n,k)$'s. $A_l(n,n)$ is exactly Wang's quantum permutation group  $A_s(n)$.
\end{remark}
There is a well defined unital algebraic  homomorphism
 $$\alpha_{n,k}^{(fp)}:\C\langle X_1,...X_k\rangle\rightarrow \C\langle X_1,...X_n\rangle\otimes A_l(n,k)$$
 such that
 $$\alpha_{n,k}^{(fp)} X_j=\sum\limits_{i=1}^n X_i\otimes u_{i,j}$$
where $1\leq j\leq k$.
The distributional symmetry associated with this quantum structure is:
\begin{definition}\normalfont
Let $x_1,...,x_n\in (\A,\phi)$ be a sequence of $n$-noncommutative random variables, $k\leq n$ be a positive integer. We say the sequence is  $(n,k)$-quantum exchangeable if
$$\mu_{x_1,...x_k}(p)=\mu_{x_1,...x_n}\otimes id_{A_l(n,k)}(\alpha^{(fp)}_{n,k}(p)),$$
for all $p\in\mathbb{C}\langle X_1,...,X_k\rangle$, where $\mu_{x_1,...,x_j}$ is the joint distribution of $x_1,...x_j$  with respect to $\phi$ for $j=k,n$. 
\end{definition}

By modifying the second universal condition of $A_l(n,k)$, we can define a boolean version of partial exchangeability:

\begin{definition} For natural numbers $k\leq n$, $B_l(n,k)$ is the non-unital  universal $C^*$-algebra generated by the elements $\{u_{i,j}\}_{i=1,...,n;j=1,...,k}$ and an orthogonal projection $\p$, such that
\begin{enumerate}
\item $u_{i,j}$ is an orthogonal projection, i.e. $u_{i,j}=u_{i,j}^*=u_{i,j}^2$.
\item $\sum\limits_{i=1}^n u_{i,j}\p=\p$ for all $1\leq j\leq k$.
\end{enumerate}
\end{definition}
\begin{remark} $B_l(n,n)$ is exactly the boolean exchangeable quantum semigroup  $B_s(n)$.
\end{remark}

There is a well defined unital algebraic  homomorphism
 $$\alpha_{n,k}^{(bp)}:\C\langle X_1,...X_k\rangle\rightarrow \C\langle X_1,...X_n\rangle\otimes B_l(n,k)$$
 such that
 $$\alpha_{n,k}^{(bp)} X_j=\sum\limits_{i=1}^n X_i\otimes u_{i,j}$$
where $1\leq j\leq k$.
The distributional symmetry associated with this quantum structure is:
\begin{definition}\normalfont
Let $x_1,...,x_n\in (\A,\phi)$ be a sequence of $n$-noncommutative random variables, $k\leq n$ be a positive integer. We say the sequence is  $(n,k)$-boolean exchangeable if
$$\mu_{x_1,...x_k}(p)\p=\p\mu_{x_1,...x_n}\otimes id_{B_l(n,k)}(\alpha^{(bp)}_{n,k}(p))\p$$
for all $p\in\mathbb{C}\langle X_1,...,X_k\rangle$, where $\mu_{x_1,...,x_j}$ is the joint distribution of $x_1,...x_j$  with respect to $\phi$. 
\end{definition}

Now, we turn to define our noncommutative distributional symmetries for infinite sequences of random variables.  In this paper, our infinite ordered index set $I$ would be either $\mathbb{N}$ or $\mathbb{Z}$.

\begin{definition}\normalfont
Let $(\A,\phi)$ be a noncommutative probability space,  $I$ be  an ordered index set and $(x_i)_{i\in I}$ a sequence of random variables in $\A$.   $(x_i)_{i\in I}$ is said to be monotonically (boolean) spreadable if all its finite subsequences $(x_{i_1},...,x_{i_l})$ are monotonically(boolean) spreadable.
\end{definition}

\begin{lemma}\label{submono}
Let $(x_1,...,x_{n+1})$ be a monotonically spreadable sequence of random variables in $(\A,\phi)$. Then, all its subsequences are monotonically spreadable.
\end{lemma}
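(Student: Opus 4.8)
The plan is to induct on the length of the sequence and reduce the whole statement to the single case of deleting one variable. Let $R(N)$ be the assertion that every monotonically spreadable sequence of length $N$ has all of its subsequences monotonically spreadable; the lemma is $R(n+1)$, and $R(1)$ is trivial. For the inductive step, note that any proper subsequence of $(x_1,\dots,x_{n+1})$ omits at least one index $l$, hence is a subsequence of $Y_l=(x_1,\dots,x_{l-1},x_{l+1},\dots,x_{n+1})$. So if each $Y_l$ is again monotonically spreadable, then $R(n)$ applied to $Y_l$ (which has length $n$) finishes the job. Thus it suffices to prove that deleting a single variable from a monotonically spreadable sequence again yields a monotonically spreadable one.

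To establish this, fix $l$ and $k<n$ and write $Y_l=(y_1,\dots,y_n)$ with $y_i=x_i$ for $i<l$ and $y_i=x_{i+1}$ for $i\ge l$. The key device is a unital $*$-homomorphism of monotone increasing spaces. I would use the universal property of $M_i(n+1,k)$ (the same kind of relation-checking as in Lemma \ref{extension}) to build $\pi\colon M_i(n+1,k)\to M_i(n,k)$ sending the generator $u_{i,j}$ to $u_{i,j}$ for $i<l$, to $0$ for $i=l$, and to $u_{i-1,j}$ for $i>l$; verifying that these images satisfy the projection, monotone, and increasing relations of $M_i(n+1,k)$ is routine, since deleting a row preserves each relation (the increasing condition survives because the induced row map is order preserving). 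This $\pi$ fixes the invariant projection, $\pi(\p)=\p$, and realizes the order-embedding $\tau$ of $\{1,\dots,n\}$ into $\{1,\dots,n+1\}$ skipping $l$, so that $y_i=x_{\tau(i)}$. Consequently, applying $\pi$ to the $M_i(n+1,k)$-invariance identity of $(x_1,\dots,x_{n+1})$ turns its right-hand side into $\p\,(\mu_{Y_l}\otimes\mathrm{id})(\am_{n,k}(p))\,\p$, yielding $\mu_{x_1,\dots,x_k}(p)\,\p=\p\,(\mu_{Y_l}\otimes\mathrm{id})(\am_{n,k}(p))\,\p$ for every $p\in\C\langle X_1,\dots,X_k\rangle$.

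It remains to replace the head $\mu_{x_1,\dots,x_k}$ by $\mu_{y_1,\dots,y_k}$. When $l>k$ we have $(y_1,\dots,y_k)=(x_1,\dots,x_k)$, so nothing is needed and $Y_l$ is $M_i(n,k)$-invariant. When $l\le k$ the two heads differ: $(y_1,\dots,y_k)$ is the subsequence of the prefix $(x_1,\dots,x_{k+1})$ obtained by deleting $x_l$, while $(x_1,\dots,x_k)$ is obtained by deleting $x_{k+1}$. Hence what is required is that these two length-$k$ subsequences of $(x_1,\dots,x_{k+1})$ have the same joint distribution. This prefix is itself monotonically spreadable, since prefixes inherit spreadability from the clean ``delete-the-last-variable'' instance of the previous paragraph (where $l=n+1>k$ always holds), so it is $M_i(k+1,k)$-invariant. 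I would then extract the distributional identity by evaluating this invariance in the explicit representation of Proposition \ref{nontrivial representation} with a single two-dimensional Hilbert-space block placed at the relevant position, which pins the mixed moments of the two competing deletions to each other.

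The main obstacle is precisely this last distributional step. A single coaction-intertwining homomorphism cannot absorb a deletion sitting inside the tested block $\{1,\dots,k\}$: the column at which the deleted index would be ``frozen'' depends on the quantum selection and so is not a fixed datum, which is exactly why the column-augmenting construction of Lemma \ref{extension} only handles prefixes (mapped to $\p$) and suffixes (mapped to $I$). Thus the head correction genuinely uses the monotone structure rather than pure algebra, and the care lies in making the representation computation uniform in $l$ — if necessary bootstrapping it along the induction so that all one-variable deletions of every prefix are seen to be identically distributed. By contrast, the homomorphism bookkeeping of the second paragraph is straightforward once the defining relations are checked against the universal property.
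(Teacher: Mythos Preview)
Your overall strategy is sound and can be completed, but it takes a detour that the paper avoids by a different choice of homomorphism.

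You build a \emph{row-deleting} map $\pi\colon M_i(n+1,k)\to M_i(n,k)$ and apply it to the $M_i(n+1,k)$-invariance of $(x_1,\dots,x_{n+1})$; this yields
\[
\mu_{x_1,\dots,x_k}(p)\,\p \;=\;\p\,(\mu_{Y_l}\otimes\mathrm{id})(\am_{n,k}(p))\,\p,
\]
leaving you with the ``head correction'' of replacing $\mu_{x_1,\dots,x_k}$ by $\mu_{y_1,\dots,y_k}$ when $l\le k$. Your proposal to extract this from Proposition~\ref{nontrivial representation} is workable but vague as stated; the cleanest justification is simply that $C(I_{k+1,k})$ is a quotient of $M_i(k+1,k)$ sending $u_{i,j}\mapsto f_{i,j}$ and $\p\mapsto 1$, so monotone spreadability of the prefix $(x_1,\dots,x_{k+1})$ implies classical spreadability, and evaluating at the increasing sequence $(1,\dots,l-1,l+1,\dots,k+1)$ gives $\mu_{x_1,\dots,x_k}=\mu_{y_1,\dots,y_k}$ directly. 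So your argument closes, but not quite by the mechanism you sketched.

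The paper instead builds a map $\psi\colon M_i(n+1,k+1)\to M_i(n,k)$ that deletes \emph{both} row $l$ and column $l$: with $\sigma$ the order-embedding of $[n]$ (resp.\ $[k]$) into $[n+1]$ (resp.\ $[k+1]$) skipping $l$, one sets $\psi(u'_{\sigma(i),\sigma(j)})=\um_{i,j}$ and $\psi(u'_{i,j})=0$ whenever $i=l$ or $j=l$. Then a monomial $X_{j_1}\cdots X_{j_m}$ with $j_t\in[k]$ is tested in the $(n+1,k+1)$-invariance at columns $\sigma(j_1),\dots,\sigma(j_m)$, so the left-hand side is $\mu_{x_1,\dots,x_{k+1}}(X_{\sigma(j_1)}\cdots X_{\sigma(j_m)})=\mu_{y_1,\dots,y_k}(X_{j_1}\cdots X_{j_m})$ \emph{automatically}, and the row deletion on the right produces the $M_i(n,k)$-coaction on $(y_1,\dots,y_n)$. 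No separate distributional identity is needed, and all positions $l$ are handled uniformly in one step. The trade-off is that the matrix $(P_{i,j})$ realizing $\psi$ is slightly more elaborate to write down and verify against the universal relations, but the argument is shorter overall.
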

\begin{proof}
It suffices to show that  the subsequence $(x_1,...,x_{l-1},x_{l+1},...,x_{n+1})$ is monotonically spreadable for all $1\leq l \leq n$. If we denote $(x_1,...,x_{l-1},x_{l+1},...,x_{n+1})$ by $(y_1,...,y_{n})$, then we need to show that  $(y_1,...,y_{n})$ is $M_i(n,k)$-spreadable for all $ k<n $.\\
Fix $k<n$,  let $\{u_{i,j}\}_{i=1,...,n ;j=1,...,k}$ be the set of generators of $M_i(n,k)$  and $\{P_{i,j}\}_{i=1,...,n+1 ;j=1,...,k+1}$ be an $n+1$ by $k+1$ matrix with entries in $M_i(n,k)$ such that 
$$
P_{i,j}=\left\{\begin{array}{cc}
\um_{i,j}& \text{if}\,\,   1\leq i,j<l\\
\um_{i-1,j}& \text{if}\,\,   1\leq j<l ,\,  i\geq l\\
\um_{i,j-1}& \text{if}\,\,   1\leq i<l, \, j\geq l\\
\um_{i-1, j-1}& \text{if}\,\,   i,j \geq l\\
0& \text{otherwise}\,\,   \\
\end{array}\right..
$$

It is a routine to check that the set $\{P_{i,j}\}_{i=1,...,n+1 ;j=1,...,k+1}$ satisfies the universal conditions of $M_i(n+1,k+1)$. Therefore, there exists a $C^*$-homomorphism  $\psi:M_i(n+1,k+1)\rightarrow M_i(n,k)$ such that 
$$\psi(u'_{i,j})=P_{i,j}$$
where $\{u'_{i,j}\}$ is the set of generators of $M_i(n+1,k+1)$.  Now, we need a convenient notation:
$$\sigma(i)=
\left\{\begin{array}{cc}
i & \text{if}\,\,   1\leq i<l\\
i+1& \text{if}\,\, i\geq l\\
\end{array}\right.$$
Then, $P_{\sigma(i),\sigma(j)}=\um_{i,j}$ and $y_i=x_{\sigma(i)}$ for all $i=1,...,n$ and $j=1,....,k+1$. For all monomial $X_{j_1}\cdots X_{j_m}\in\C\langle X_1,...,X_k\rangle$, let $P_1'=\sum\limits_{i=1}^n u'_{i,1}$ and $\p$ be the invariance projection of $M_i(n,k)$, we have 
$$
\begin{array}{rcl}
&&\mu_{y_1,...,y_n}(X_{j_1}\cdots X_{j_m})\p \\
&=&\p \mu_{x_1,...,x_{n+1}}(X_{\sigma(j_1)}\cdots X_{\sigma(j_m)})\psi(P'_1)\p \\
&=& \p \psi(\mu_{x_1,...,x_{n+1}}(X_{\sigma(j_1)}\cdots X_{\sigma(j_m)})P'_1)\p \\
&=& \p \psi(\mu_{x_1,...,x_{n+1}}\otimes id_{M_i(n+1,k+1)}(\sum\limits_{i_1,...,i_m=1}^{n+1}X_{i_1}\cdots X_{i_m}\otimes u'_{i_1,\sigma(j_1)}\cdots u'_{i_m,\sigma(j_m)}))\p \\
\end{array}
$$
Notice that $u'_{l,\sigma(j)}=0$ since $\sigma(j)$ never equals $l$, the quality can be written as the following:

$$
\begin{array}{rcl}
&&\mu_{y_1,...,y_n}(X_{j_1}\cdots X_{j_m})\p \\
&=&\p  \psi(\mu_{x_1,...,x_{n+1}}\otimes id_{M_i(n+1,k+1)}(\sum\limits_{i_1,...,i_m=1}^{n}X_{\sigma(i_1)}\cdots X_{\sigma(i_m)}\otimes u'_{\sigma(i_1),\sigma(j_1)}\cdots u'_{\sigma(i_m),\sigma(j_m)}))\p \\
&=&\p \sum\limits_{i_1,...,i_m=1}^{n}\mu_{x_1,...,x_{n+1}}(X_{\sigma(i_1)}\cdots X_{\sigma(i_m)})\psi( u'_{\sigma(i_1),\sigma(j_1)}\cdots u'_{\sigma(i_m),\sigma(j_m)})\p \\
&=& \sum\limits_{i_1,...,i_m=1}^{n}\mu_{y_1,...,y_n}(X_{i_1}\cdots X_{i_m})\p\um_{i_1,j_1}\cdots \um_{i_m,j_m}\p \\
\end{array}
$$

which completes the proof.
\end{proof}

Then,  we have

\begin{proposition}\label{sub spreadable}  Let $(\A,\phi)$ be a noncommutative probability space and  $(x_i)_{i\in \mathbb{Z}}$ be a sequence of random variables in $\A$. Then, $(x_i)_{i\in \mathbb{Z}}$ is  monotonically (quantum, boolean) spreadable  if and only if  $(x_i)_{i=-n,-n+1,...,.n-1,n}$ is monotonically (quantum, boolean) spreadable for all $n$. 
\end{proposition}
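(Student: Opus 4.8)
The plan is to reduce the statement to the single-deletion stability already established in Lemma \ref{submono}. The forward implication is immediate: if every finite subsequence of $(x_i)_{i\in\mathbb{Z}}$ is spreadable, then in particular each symmetric window $(x_i)_{i=-n,\dots,n}$ is spreadable, since it is itself a finite subsequence.

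For the converse, I would show that an arbitrary finite subsequence sits inside a large enough symmetric window and then invoke stability under passage to subsequences. Let $(x_{i_1},\dots,x_{i_l})$ with $i_1<\cdots<i_l$ be an arbitrary finite subsequence. Taking any $n\geq\max\{|i_1|,|i_l|\}$ gives $\{i_1,\dots,i_l\}\subseteq\{-n,-n+1,\dots,n\}$, so $(x_{i_1},\dots,x_{i_l})$ is a subsequence of the window $(x_i)_{i=-n,\dots,n}$, which is spreadable by hypothesis. After relabelling the $2n+1$ indices $-n,\dots,n$ as $1,\dots,2n+1$ (spreadability depends only on the order of the indices, not on their values), Lemma \ref{submono} shows that deleting a single index from a monotonically spreadable sequence again yields a monotonically spreadable sequence. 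Iterating this deletion finitely many times, once for each index of $\{-n,\dots,n\}\setminus\{i_1,\dots,i_l\}$, we conclude that $(x_{i_1},\dots,x_{i_l})$ is monotonically spreadable. As this subsequence was arbitrary, $(x_i)_{i\in\mathbb{Z}}$ is monotonically spreadable.

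For the quantum and boolean cases the argument is identical, provided one has the corresponding single-deletion stability. In the quantum case this is due to Curran \cite{Cu3}; the boolean analogue is proved exactly as Lemma \ref{submono}, replacing the monotone generators $\um_{i,j}$ by the boolean generators $\ub_{i,j}$ and constructing the $C^*$-homomorphism $\psi:B_i(n+1,k+1)\rightarrow B_i(n,k)$ from the same block matrix of projections, with the row of the deleted index sent to $0$. The only nontrivial point in the whole argument is this stability lemma: one must verify that the block matrix built from the surviving generators (together with a zero row) satisfies the universal relations of the larger increasing-space algebra, which is the routine check already carried out for $M_i(n+1,k+1)$ in the proof of Lemma \ref{submono}. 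Everything else is bookkeeping about the index ranges.
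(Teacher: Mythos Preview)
Your proof is correct and follows essentially the same approach as the paper: embed an arbitrary finite subsequence into a large symmetric window and invoke Lemma \ref{submono} to pass to subsequences. The only cosmetic difference is that Lemma \ref{submono} is already stated for \emph{all} subsequences (its proof reduces to the single-deletion case, which is what you describe), so the iteration you spell out is implicit in the lemma's statement.
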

\begin{proof}
It is sufficient to prove \lq\lq $\Leftarrow$\rq\rq. Given a subsequence $(x_{i_1},...,x_{i_l})$ of  $(x_i)_{i\in \mathbb{Z}}$,  there exits an $n$ such that $-n<i_1,...,i_l<n$.  Since $(x_i)_{i=-n,-n+1,...,.n-1,n}$ is monotonically  spreadable, by Lemma \ref{submono}, we have that  $(x_{i_1},...,x_{i_l})$ is monotonically spreadable. The same to quantum spreadability and boolean spreadability.
\end{proof}

\section{ Relations between noncommutative probabilistic symmetries}
In this section, we will study relations between the  noncommutative distributional symmetries which are introduced in the previous section.

It is well know that every $C^*$-algebra admits a faithful representation.  Fix $n,k\in\mathbb{N}$, such that $1\leq k\leq n-1$.  Let $\Phi$ be a faithful representation of $ B_l(n,k)$ into $B(\hh)$ for some Hilbert space $\hh$. For convenience, we denote $\Phi(u_{i,j})$ by $u_{i,j}$ and $\Phi(\p)$ by $\p$. 

According to the definition of $B_l(k,n)$, $u_{i,j}$'s and $\p$ are orthogonal projections in $B(\hh)$. Let $Q_i=\sum\limits_{j=1}^k u_{i,j}$  for $1\leq i\leq n$.  In \cite{KR}, we know that the set $P(\hh)$ of orthogonal projections on $\hh$ is a lattice  with respect to the usual order $\leq$ on the set of selfadjoint operators, i.e. two selfadjoint operators $A$ and $B$, $A\leq B$ iff $B-A$ is a positive operator.   

Now, we need the following notation in our construction.  Given two projections $E$ and $F$,  we denote by $E\vee F$ the minimal orthogonal projection in $P(\hh)$, such that $E\vee F$ is greater or equal to $E$ and $F$. $E\vee F$ is well define and unique,  we call it the supreme of $E$ and $F$. It is easy to see that $(E\vee F) E=E$ and $(E\vee F) F=F$

We turn to define a sequence of orthogonal projections $\{P'_i\}_{i=1,...,n}$ in $P(\hh) $ as follows:
$$ P'_1=I-Q_1,$$
$$ P'_i=I-P'_1\vee\cdots\vee P'_{i-1}\vee Q_i $$
for $2\leq i\leq n$.

\begin{lemma}\label{sup} Given a nonzero  vector $v\in\hh$,  $E$ and $F$ are two orthogonal projections on $\hh$. If  $(E\vee F)x=x$ and $ Ex=0$, then $Fx=x$.
\end{lemma}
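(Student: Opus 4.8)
The plan is to translate the statement into the geometry of the ranges of the projections and then reduce it to a one-line algebraic identity. The hypothesis $(E\vee F)x=x$ says exactly that $x$ lies in the range of $E\vee F$, and since $E\vee F$ is the lattice join this range is the closed subspace $\overline{\operatorname{ran}E+\operatorname{ran}F}$. The hypothesis $Ex=0$ says that $x$ is orthogonal to $\operatorname{ran}E$, while the desired conclusion $Fx=x$ says precisely that $x\in\operatorname{ran}F$. So the content to be established is that the part of the joint range $\overline{\operatorname{ran}E+\operatorname{ran}F}$ lying orthogonal to $\operatorname{ran}E$ is carried by $F$.

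The computational heart of the argument is the Boolean formula for the join, $E\vee F=E+F-EF$. Granting this, I would simply apply both sides to $x$: using $Ex=0$ one gets $(E\vee F)x=Fx-EFx$, and since $EFx=FEx=F(Ex)=0$ this collapses to $(E\vee F)x=Fx$. Combined with the hypothesis $(E\vee F)x=x$, this yields $Fx=x$, as required. Equivalently, the same computation gives $\|Fx\|=\|x\|$, which also finishes the proof because $F$ is a contraction and $\|x\|^2=\|Fx\|^2+\|(I-F)x\|^2$ forces $(I-F)x=0$.

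The main obstacle — and the step on which the whole argument rests — is justifying the identity $E\vee F=E+F-EF$, which is valid exactly when $E$ and $F$ commute. This is not automatic for an arbitrary pair of projections: for two distinct lines in $\C^2$ one has $E\vee F=I$, so every $x$ orthogonal to $\operatorname{ran}E$ satisfies $(E\vee F)x=x$ while $Fx$ need not equal $x$. Hence the proof must draw on the particular origin of the projections to which the lemma is applied, namely the $P'_i$ and the sums $Q_i=\sum_{j}u_{i,j}$ constructed above, and verify that the relevant pairs are compatible (commuting), or equivalently that $\operatorname{ran}E+\operatorname{ran}F$ is already closed and splits compatibly with $F$. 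Extracting this commutativity from the defining relations of $B_l(n,k)$ and the recursive definition of the $P'_i$ is where the genuine work lies; once it is in hand, the two-line computation of the preceding paragraph closes the argument.
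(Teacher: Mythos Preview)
Your diagnosis is entirely correct, and in fact the paper offers no proof of this lemma at all: it is stated baldly and then used. So there is nothing to compare your argument against; instead, the relevant question is whether the statement itself holds, and you have put your finger on the issue.

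The lemma as written is false in general. Your counterexample---two distinct lines in $\C^2$---is the right one: with $E$ the projection onto $(1,0)$ and $F$ the projection onto $(1,1)/\sqrt{2}$, one has $E\vee F=I$, so $x=(0,1)$ satisfies $(E\vee F)x=x$ and $Ex=0$, yet $Fx\neq x$. Your proposed salvage via $E\vee F=E+F-EF$ is sound precisely when $E$ and $F$ commute, and the ensuing two-line computation is then correct.

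The trouble is that in the paper's actual application (the proof of the next lemma), the lemma is invoked with $E=P'_1\vee\cdots\vee P'_{i-1}$ and $F=Q_i$, and there is no visible reason these commute: the $Q_i$ are sums of the generating projections of the universal $C^*$-algebra $B_l(n,k)$, whose defining relations impose no commutation between projections from different columns, and the $P'_j$ are built recursively from the $Q_j$. So the commutativity you rightly identify as the crux does not obviously follow from the ambient relations. In short, you have located a genuine gap---not in your own write-up, but in the paper.
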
 

According the construction of $\{P'_i\}_{1\leq i\leq n}$,  we have 
$$P'_iP'_j=\delta_{i,j}P'_i$$ and 
$$P_i'u_{i,j}=0$$
for all $1\leq i\leq n$ and $1\leq j\leq k$.

\begin{lemma}\label{adding} $\sum\limits_{i=1}^nP_i'=I$, where $I$  is the identity in $B(\hh)$.
\end{lemma}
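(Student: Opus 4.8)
The plan is to prove $\sum_{i=1}^n P_i' = I$ by showing that the mutually orthogonal projections $P_i'$ leave no nonzero vector uncovered. Since $P_i'P_j' = \delta_{i,j}P_i'$, the operator $P := \sum_{i=1}^n P_i'$ is itself an orthogonal projection, so $P = I$ is equivalent to $\ker P = \{0\}$. By orthogonality $\langle Pv,v\rangle = \sum_i \|P_i'v\|^2$, so $Pv = 0$ holds if and only if $P_i'v = 0$ for every $i$. Thus the whole lemma reduces to: if $v\in\hh$ satisfies $P_i'v = 0$ for all $i$, then $v = 0$. I would fix such a $v$ and work towards a contradiction.

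The core step is an induction showing that any such $v$ is fixed by every $Q_i$, that is $Q_iv = v$. For $i=1$ this is immediate from $P_1' = I - Q_1$, since $P_1'v=0$ gives $(I-Q_1)v = 0$. For $i\ge 2$ set $S_{i-1} := P_1'\vee\cdots\vee P_{i-1}'$, which by mutual orthogonality coincides with $P_1'+\cdots+P_{i-1}'$; in particular $S_{i-1}v = 0$ because each $P_m'v = 0$. By construction $I - P_i' = S_{i-1}\vee Q_i$, so the assumption $P_i'v = 0$ reads $(S_{i-1}\vee Q_i)v = v$. Now I would apply Lemma \ref{sup} with $E = S_{i-1}$ and $F = Q_i$: its two hypotheses $(E\vee F)v = v$ and $Ev = 0$ are exactly what we have just recorded, and it yields $Q_iv = v$. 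Hence $v$ lies in $\mathrm{ran}(Q_i)$ for every $i=1,\dots,n$, i.e. $v$ is a common fixed vector of all the $Q_i$.

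The remaining point — and the one I expect to be the main obstacle — is to rule out a nonzero vector fixed by all of the $Q_i$ simultaneously. This cannot be settled at the purely lattice-theoretic level (two projections can perfectly well share a common range vector), so it must genuinely use the defining relation $\sum_{i=1}^n u_{i,j}\p = \p$ of $B_l(n,k)$ together with the faithfulness of $\Phi$; indeed in a non-faithful model one can produce a nonzero common fixed vector of the $Q_i$ that is orthogonal to $\mathrm{ran}(\p)$, so faithfulness is doing essential work here. Concretely, I would argue that a nonzero $v$ with $Q_iv = \sum_j u_{i,j}v = v$ for all $i$, combined with the column relations $\sum_i u_{i,j}\p = \p$, forces a configuration of the generators $u_{i,j}$ and $\p$ that cannot occur in a faithful representation, so that no such $v$ exists and therefore $v = 0$. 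Granting this, the kernel of $\sum_i P_i'$ is trivial and $\sum_{i=1}^n P_i' = I$; the auxiliary facts (that $S_{i-1}$ is a projection, that the suprema telescope correctly, and the index bookkeeping in the induction) are routine and I would not belabour them.
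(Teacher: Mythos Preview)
Your reduction and the induction via Lemma~\ref{sup} match the paper's proof exactly: both arrive at a putative nonzero $v$ with $Q_i v = v$ for every $i=1,\dots,n$. The gap is entirely in your last step, and your diagnosis of what is needed there is off. You assert that this step ``must genuinely use the defining relation $\sum_i u_{i,j}\p = \p$ \ldots\ together with the faithfulness of $\Phi$'' and then leave the argument unwritten. The paper uses neither ingredient. Its contradiction is a pure norm bound: summing $Q_i v = v$ over $i=1,\dots,n$ gives
\[
nv \;=\; \sum_{i=1}^n Q_i v \;=\; \sum_{i=1}^n \sum_{j=1}^k u_{i,j}\, v \;=\; \sum_{j=1}^k \Bigl(\sum_{i=1}^n u_{i,j}\Bigr) v,
\]
so $n$ lies in the spectrum of $T := \sum_{j=1}^k \sum_{i=1}^n u_{i,j}$. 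For each fixed column $j$ the projections $u_{1,j},\dots,u_{n,j}$ are pairwise orthogonal, hence $\sum_i u_{i,j} \le I$; therefore $0 \le T \le kI$. Since $k \le n-1$, this contradicts $n \in \sigma(T)$.

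Your lattice-theoretic observation that two projections can share a fixed vector is correct but beside the point: what rules out a common fixed vector here is not a lattice fact but the operator inequality $\sum_i Q_i = T \le kI$ with $k<n$, and that inequality holds in \emph{any} representation. So your claim that a non-faithful model could furnish a nonzero common fixed vector of the $Q_i$ is wrong, and invoking faithfulness or the $\p$-relation is a detour that would not by itself close the argument.
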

\begin{proof}
Since the orthogonal projections $P'_i$ are orthogonal to each other, $\sum\limits_{i=1}^n P_i'$ is an orthogonal projection which is less than or equal to the identity $I$. If $\sum\limits_{i=1}^n P_i'< I$, then there exists a nonzero vector $v\in\hh$ such that $$ \sum\limits_{i=1}^n P_i' v=0.$$
Then, we have $$0=P_i'x=(I-P'_1\vee\cdots\vee P'_{i-1}\vee P_i )x$$
or say
$$ (P'_1\vee\cdots\vee P'_{i-1}\vee P_i )x=x$$
for all $i$.
 Since $P'_mx=0$ for all $1\leq m\leq i-1$, by Lemma\ref{sup}, $P_ix=x$.  Then, we have 
$$
\begin{array}{rcl}
nx&=&\sum\limits_{i=1}^n P_ix\\
&=&\sum\limits_{i=1}^n\sum\limits_{j=1}^k u_{i,j}x\\
&=&\sum\limits_{j=1}^k(\sum\limits_{i=1}^n u_{i,j}x)\\
\end{array},
$$
which implies that $n$ is in the spectrum of $ \sum\limits_{j=1}^k\sum\limits_{i=1}^n u_{i,j}$.
Notice that, to every $1\leq j\leq k$, $\sum\limits_{i=1}^n u_{i,j}\leq I$ since they are orthogonal projections and orthogonal to each other. Therefore,
$$0\leq \sum\limits_{j=1}^k\sum\limits_{i=1}^n u_{i,j}\leq \sum\limits_{j=1}^k I\leq kI.$$
It contradicts to the implication above. The proof is complete.
\end{proof}

\begin{corollary} $\sum\limits_{i=1}^nP_i'\p=\p.$
\end{corollary}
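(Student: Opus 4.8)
The plan is to deduce this immediately from Lemma~\ref{adding}. That lemma establishes the completeness relation $\sum_{i=1}^n P_i' = I$ for the mutually orthogonal projections $P_i'$ in $B(\hh)$. Since $\p$ is an element of $B(\hh)$, I would simply right-multiply both sides of this identity by $\p$, obtaining
$$\sum_{i=1}^n P_i'\,\p = \Big(\sum_{i=1}^n P_i'\Big)\p = I\p = \p,$$
which is exactly the claim. No further argument is required.

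Should one prefer a proof that avoids invoking the full strength of Lemma~\ref{adding} — that is, the exact identity $\sum_i P_i' = I$ rather than just its action on the range of $\p$ — I would instead argue directly on vectors in $\p\hh$. First I would record that the defining relation $\sum_{i=1}^n u_{i,j}\p = \p$ of $B_l(n,k)$ gives $Q_i\p = \sum_{j=1}^k u_{i,j}\p$, and then mimic the vector computation from the proof of Lemma~\ref{adding}, but restricted to a hypothetical nonzero $x \in \p\hh$ in the kernel of $\sum_i P_i'$. The same application of Lemma~\ref{sup} would yield $P_i x = x$ for every $i$, and hence the same spectral contradiction with the bound $0 \le \sum_{j=1}^k\sum_{i=1}^n u_{i,j} \le kI$.

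There is essentially no obstacle here: the statement is a one-line consequence of the preceding lemma, and its purpose is merely to repackage the completeness relation in the convenient form $\sum_i P_i'\p = \p$ that matches the invariant-projection conditions defining the boolean and monotone structures. The one point to watch is the side of multiplication — one multiplies by $\p$ on the right, so as to align with the orientation of the relations $\sum_i u_{i,j}\p = \p$ used throughout.
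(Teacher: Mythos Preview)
Your proposal is correct and matches the paper's approach: the corollary is stated without proof immediately after Lemma~\ref{adding}, precisely because it follows by right-multiplying the identity $\sum_{i=1}^n P_i' = I$ by $\p$. Your alternative vector-level argument is unnecessary here but not wrong.
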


\begin{proposition}
Let $(\A,\phi)$ be a noncommutative probability space,  $(x_i)_{i=1,...,n}$ is a finite ordered sequence of random variables in $\A$. For fixed $n>k$,  the  joint distribution $\mu_{x_1,...,x_n}$ is $A_l(n,k)$-invariant if it is $A_l(n,k+1)$-invariant
\end{proposition}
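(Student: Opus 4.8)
The plan is to compare the two invariance conditions through a single unital $C^*$-homomorphism between the underlying quantum spaces, and then transport the distribution across it. First I would construct a unital $C^*$-homomorphism $\pi:A_l(n,k+1)\rightarrow A_l(n,k)$. Writing $\{u_{i,j}\}_{i=1,\dots,n;\,j=1,\dots,k+1}$ for the generators of $A_l(n,k+1)$ and $\{u_{i,j}\}_{i=1,\dots,n;\,j=1,\dots,k}$ for those of $A_l(n,k)$, I set
$$\pi(u_{i,j})=u_{i,j}\quad(1\le j\le k),\qquad \pi(u_{i,k+1})=\delta_{i,1}\,1_{A_l(n,k)}.$$
The prescribed images are orthogonal projections, and each column still sums to the unit: for $1\le j\le k$ this is the column relation of $A_l(n,k)$, while the last column maps to the trivial partition $1+0+\cdots+0=1$. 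Since $A_l(n,k)$ imposes no cross-column relations, the defining relations of $A_l(n,k+1)$ are satisfied by the images, and the universal property of $A_l(n,k+1)$ produces a unital $\pi$.

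Next I would check that $\pi$ intertwines the two coactions on polynomials in the first $k$ variables, namely
$$(\mathrm{id}\otimes\pi)\circ\alpha^{(fp)}_{n,k+1}=\alpha^{(fp)}_{n,k}\quad\text{on }\C\langle X_1,\dots,X_k\rangle.$$
Both sides are unital homomorphisms out of $\C\langle X_1,\dots,X_k\rangle$, so it suffices to verify equality on each generator $X_j$ with $1\le j\le k$. There $(\mathrm{id}\otimes\pi)\big(\sum_i X_i\otimes u_{i,j}\big)=\sum_i X_i\otimes u_{i,j}$, because $\pi$ fixes $u_{i,j}$ for $j\le k$, and this is exactly $\alpha^{(fp)}_{n,k}(X_j)$.

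Finally I would push the distribution through. Because $\pi$ is linear and $\mu_{x_1,\dots,x_n}$ is scalar-valued, one has the naturality identity $(\mu_{x_1,\dots,x_n}\otimes\mathrm{id})\circ(\mathrm{id}\otimes\pi)=\pi\circ(\mu_{x_1,\dots,x_n}\otimes\mathrm{id})$. Fix $p\in\C\langle X_1,\dots,X_k\rangle\subseteq\C\langle X_1,\dots,X_{k+1}\rangle$. Combining the intertwining relation with naturality gives
$$\mu_{x_1,\dots,x_n}\otimes\mathrm{id}_{A_l(n,k)}\big(\alpha^{(fp)}_{n,k}(p)\big)=\pi\Big(\mu_{x_1,\dots,x_n}\otimes\mathrm{id}_{A_l(n,k+1)}\big(\alpha^{(fp)}_{n,k+1}(p)\big)\Big).$$
Since $p$ does not involve $X_{k+1}$ we have $\mu_{x_1,\dots,x_{k+1}}(p)=\mu_{x_1,\dots,x_k}(p)$, so the assumed $A_l(n,k+1)$-invariance rewrites the bracketed term as $\mu_{x_1,\dots,x_k}(p)\,1_{A_l(n,k+1)}$; applying the unital map $\pi$ yields $\mu_{x_1,\dots,x_k}(p)\,1_{A_l(n,k)}$, which is precisely the $A_l(n,k)$-invariance of $\mu_{x_1,\dots,x_n}$.

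The argument is essentially formal once $\pi$ is available, so the only step demanding genuine attention is the construction of $\pi$: one must choose images for the extra column $u_{i,k+1}$ that still form a partition of unity, which is exactly what the universal property needs. I expect no real obstacle here, precisely because $A_l(n,k)$ carries no increasing or cross-column constraints; the same scheme would require carefully tracking the increasing conditions if one attempted the analogous monotonicity reduction for $M_i(n,k)$ or $A_i(n,k)$.
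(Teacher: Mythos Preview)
Your argument is correct. Both you and the paper follow the same high-level strategy: build a unital $C^*$-homomorphism $A_l(n,k+1)\to A_l(n,k)$ that acts as the identity on the first $k$ columns of generators, and then push the $A_l(n,k+1)$-invariance through it. The difference lies entirely in how the extra column is produced. The paper works inside a faithful representation $\Phi:A_l(n,k)\hookrightarrow B(\hh)$ and manufactures a new column $(P'_1,\dots,P'_n)$ of pairwise orthogonal projections summing to $I$ via a lattice-theoretic recursion $P'_i=I-P'_1\vee\cdots\vee P'_{i-1}\vee Q_i$, invoking the preparatory Lemmas on suprema of projections to show $\sum_i P'_i=I$. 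You instead observe that the trivial partition $(1,0,\dots,0)$ already satisfies every defining relation of the $(k{+}1)$-st column, so the universal property hands you $\pi$ immediately. Your route is more elementary and avoids the representation-theoretic detour entirely; the paper's construction, on the other hand, is set up so that the same machinery simultaneously yields the boolean analogue (the Corollary $\sum_i P'_i\p=\p$), though in fact your trivial choice works there as well. Your closing remark is also apt: precisely because $A_l(n,k)$ carries no cross-column or increasing constraints, any partition of unity suffices for the extra column, which is why the argument is so short here.
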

\begin{proof}
Let $\{u_{ij}|1\leq i\leq n, 1\leq j\leq k\}$ the set of  standard generators of $A_l(n,k)$, $\Phi$ be a faithful representation of $A_l(n,k)$ into $B(\hh)$. With the above construction, we can define  $\{u'_{i,j}\}_{i=1,...,n;j=1,...,k+1}$ as following:
$$
u_{i,j}'=\left\{\begin{array}{cc}
\Phi(u_{i,j}) & \text{if}\,\,\,\,\, j\leq k\\
P_j' & \text{if}\,\,\,\,\, j=k\\
\end{array}\right.
$$
By Lemma \ref{adding}, $\{u'_{i,j}\}_{i=1,...,n;j=1,...,k+1}$ satisfies the universal conditions for $A_l(n,k+1)$. Let $\{u''_{ij}|1\leq i\leq n, 1\leq j\leq k+1\}$ be the set of  standard generators of $A_l(n,k+1)$. then there exists a $C^*-$homomorphism $\Phi:A_l(n,k+1)\rightarrow B(\hh)$ such that:
$$\Phi'(u''_{ij})=u'_{i,j}.$$
Therefore, $\Phi^{-1}\Phi'$ defines a unital $C^*-$homomorphism
$$\Phi^{-1}\Phi': C^*-alg\{u'_{i,j}|1\leq i\leq n, 1\leq j\leq k\}\rightarrow A_l(n,k)$$
such that 
$$\Phi^{-1}\Phi'(u'_{i,j})=u_{i,j}$$
for all $1\leq i\leq n, 1\leq j\leq k$.

If $\mu_{x_1,...,x_n}$ is $A_l(n,k+1)$-invariant, then 
$$\mu_{x_1,...,x_{k+1}}(p)=\mu_{x_1,...,x_{k}}\otimes id_{A_l(n,k+1)}(\alpha^{(fp)}_{n,k+1}(p))$$
for all $p\in\C\langle X_1,...,X_{k+1}\rangle$.
Let $p=X_{j_1}\cdots X_{j_l}\in\C\langle X_1,...,X_{k}\rangle$, then we have 
$$ 
\begin{array}{rcl}
&&\mu_{x_1,...,x_{k}}(P)1_{A(n,k)}\\
&&\Phi^{-1}\Phi'(\mu_{x_1,...,x_{k+1}}(P)1_{A(n,k+1)}))\\
&=&\Phi^{-1}\Phi'(\mu_{x_1,...,x_{n}}\otimes id_{A_l(n,k+1)}(\alpha^{(fp)}_{n,k+1}(X_{j_1}\cdots X_{j_l}))\\
&=&\Phi^{-1}\Phi'(\mu_{x_1,...,x_{n}}\otimes id_{A_l(n,k+1)}(\sum\limits_{i_1,...,i_l}^n X_{i_1}\cdots X_{i_l}\otimes u'_{i_1,j_1}\cdots u'_{i_l,j_l})\\
&=&\mu_{x_1,...,x_{n}}\otimes id_{A_l(n,k)}(\sum\limits_{i_1,...,i_l}^n X_{i_1}\cdots X_{i_l}\otimes u_{i_1,j_1}\cdots u_{i_l,j_l})\\
&=&\mu_{x_1,...,x_{n}}\otimes id_{A_l(n,k)}(\alpha^{(fp)}_{n,k}(P))
\end{array}
$$
Since $p$ is an arbitrary monomial, the proof is complete.
\end{proof}

The same, we can show that
\begin{corollary}
$\mu_{x_1,...,x_n}$ is $B_l(n,k)$-invariant if it is $B_l(n,k+1)$-invariant
\end{corollary}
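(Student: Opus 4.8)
The plan is to imitate the proof of the preceding proposition almost verbatim, replacing the partition-of-unity identity $\sum_{i=1}^n P'_i = I$ of Lemma \ref{adding} by its $\p$-weighted counterpart $\sum_{i=1}^n P'_i\,\p = \p$ (the corollary immediately following Lemma \ref{adding}), and sandwiching every identity between $\p(-)\p$. Concretely, I would fix a faithful representation $\Phi$ of $B_l(n,k)$ on a Hilbert space $\hh$, write $u_{i,j}$ and $\p$ for the images of the standard generators, and form the projections $P'_1,\ldots,P'_n$ exactly as at the start of the section. Then I enlarge the generating system to $k+1$ columns by setting
$$
u'_{i,j}=\begin{cases}\Phi(u_{i,j}) & 1\le j\le k,\\ P'_i & j=k+1,\end{cases}
$$
keeping the same invariant projection $\p$.

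The first step is to check that $\{u'_{i,j}\}$ together with $\p$ satisfies the defining relations of $B_l(n,k+1)$: each $u'_{i,j}$ is an orthogonal projection (the new column entries $P'_i$ are projections by construction), and the column condition $\sum_{i=1}^n u'_{i,j}\,\p = \p$ holds for $1\le j\le k$ because $\Phi$ is a homomorphism and the relation holds in $B_l(n,k)$, while for $j=k+1$ it is precisely $\sum_{i=1}^n P'_i\,\p = \p$, i.e. the corollary. By the universal property of the (non-unital) algebra $B_l(n,k+1)$ this produces a $*$-homomorphism $\Phi':B_l(n,k+1)\to B(\hh)$ sending the standard generators $u''_{i,j}\mapsto u'_{i,j}$ and the invariant projection of $B_l(n,k+1)$ to $\p$.

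Next, since $\Phi$ is faithful, $\Phi^{-1}$ is defined on $\Phi(B_l(n,k))$, and for $1\le j\le k$ we have $u'_{i,j}=\Phi(u_{i,j})\in\Phi(B_l(n,k))$, so $\Phi^{-1}\Phi'$ sends $u''_{i,j}\mapsto u_{i,j}$ for $j\le k$ and $\p\mapsto\p$. The key point is that any test monomial $p\in\C\langle X_1,\ldots,X_k\rangle$ involves only $X_1,\ldots,X_k$, so every coefficient appearing in $\alpha^{(bp)}_{n,k+1}(p)$ is a product $u''_{i_1,j_1}\cdots u''_{i_l,j_l}$ with all column indices $j_t\le k$; thus $\Phi^{-1}\Phi'$ may be applied to it and turns it into the corresponding coefficient of $\alpha^{(bp)}_{n,k}(p)$. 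Applying $\Phi^{-1}\Phi'$ to the $B_l(n,k+1)$-invariance identity for such $p$, and using $\mu_{x_1,\ldots,x_{k+1}}(p)=\mu_{x_1,\ldots,x_k}(p)$ (since $p$ omits $X_{k+1}$), yields
$$
\mu_{x_1,\ldots,x_k}(p)\,\p=\p\,\mu_{x_1,\ldots,x_n}\otimes\mathrm{id}_{B_l(n,k)}\big(\alpha^{(bp)}_{n,k}(p)\big)\,\p,
$$
which is the $B_l(n,k)$-invariance of $\mu_{x_1,\ldots,x_n}$; by linearity it suffices to run this for monomials $p$.

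I expect the only real subtlety, the main obstacle, to be the same one implicit in the proposition: one must be careful never to apply $\Phi^{-1}$ to the new column $u'_{i,k+1}=P'_i$, which need not lie in the merely $C^*$ (not von Neumann) image $\Phi(B_l(n,k))$. This is guaranteed automatically, because the invariance condition is tested only against polynomials in the first $k$ variables, so the $(k+1)$-st column is used solely to certify the universal relations of $B_l(n,k+1)$ and never enters the final computation. Everything else is a routine transcription of the $A_l$ argument with $\p$ inserted on both sides.
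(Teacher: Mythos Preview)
Your proposal is correct and is precisely the argument the paper has in mind: the paper's own proof of this corollary is just the phrase ``The same, we can show that,'' meaning the proof of the preceding proposition carries over verbatim with the modifications you describe (use the $\p$-weighted identity $\sum_i P'_i\,\p=\p$ in place of $\sum_i P'_i=I$, send the invariant projection of $B_l(n,k+1)$ to $\p$, and sandwich the invariance identity by $\p$ on both sides). Your remark about never needing to apply $\Phi^{-1}$ to the extra column is exactly the point, and it is handled for the same reason as in the $A_l$ case.
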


\begin{lemma}
$\mu_{x_1,...,x_n}$ is $(n,k)$-quantum spreadable if it is $A_l(n,k)$-invariant.
\end{lemma}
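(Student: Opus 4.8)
The plan is to exploit the fact, already recorded in the remark following the definition of $A_l(n,k)$, that $A_i(n,k)$ is a quotient of $A_l(n,k)$, and to push the $A_l(n,k)$-invariance identity forward along the quotient map. The entire argument is a diagram chase; the only point that must be stated with care is the existence and unitality of the quotient map, which is what allows the scalar on the left-hand side to survive the pushforward.

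First I would produce the quotient homomorphism explicitly. Writing $u_{i,j}^{(l)}$ for the standard generators of $A_l(n,k)$ and $u_{i,j}^{(i)}$ for those of $A_i(n,k)$, the family $\{u_{i,j}^{(i)}\}$ consists of orthogonal projections whose columns sum to $1$, so it satisfies the two defining relations of $A_l(n,k)$; the increasing-sequence relation of $A_i(n,k)$ is an extra constraint that plays no role here. By the universal property of $A_l(n,k)$ there is therefore a unital $*$-homomorphism $\pi:A_l(n,k)\rightarrow A_i(n,k)$ with $\pi(u_{i,j}^{(l)})=u_{i,j}^{(i)}$, and $\pi$ is surjective since its image contains all generators of $A_i(n,k)$.

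Next I would check that $\pi$ intertwines the two coactions and the two slice maps. On generators, $(id\otimes\pi)(\alpha^{(fp)}_{n,k}(X_j))=\sum_{i=1}^n X_i\otimes\pi(u_{i,j}^{(l)})=\sum_{i=1}^n X_i\otimes u_{i,j}^{(i)}=\alpha_{n,k}(X_j)$; since all three maps are homomorphisms, this gives $(id\otimes\pi)\circ\alpha^{(fp)}_{n,k}=\alpha_{n,k}$ on all of $\C\langle X_1,\dots,X_k\rangle$. Moreover, evaluating on an elementary tensor $a\otimes b$ shows $\pi\circ(\mu_{x_1,\dots,x_n}\otimes id_{A_l(n,k)})=(\mu_{x_1,\dots,x_n}\otimes id_{A_i(n,k)})\circ(id\otimes\pi)$, because both sides send $a\otimes b$ to $\mu_{x_1,\dots,x_n}(a)\,\pi(b)$.

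Finally I would apply $\pi$ to the $A_l(n,k)$-invariance identity $\mu_{x_1,\dots,x_k}(p)\,1_{A_l(n,k)}=\mu_{x_1,\dots,x_n}\otimes id_{A_l(n,k)}(\alpha^{(fp)}_{n,k}(p))$, valid for all $p\in\C\langle X_1,\dots,X_k\rangle$. The left side becomes $\mu_{x_1,\dots,x_k}(p)\,1_{A_i(n,k)}$ because $\pi$ is unital, and the right side becomes $\mu_{x_1,\dots,x_n}\otimes id_{A_i(n,k)}(\alpha_{n,k}(p))$ by the two intertwining relations just established. Since $\mu_{x_1,\dots,x_n}(p)=\mu_{x_1,\dots,x_k}(p)$ whenever $p$ involves only $X_1,\dots,X_k$, the resulting identity is exactly the $A_i(n,k)$-spreadability of $\mu_{x_1,\dots,x_n}$, i.e. its $(n,k)$-quantum spreadability. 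I expect no genuine obstacle beyond this bookkeeping; the substance is entirely contained in the observation that $A_i(n,k)$ is a unital quotient of $A_l(n,k)$.
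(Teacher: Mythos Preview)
Your proof is correct and follows essentially the same route as the paper: both arguments produce the unital $*$-homomorphism $A_l(n,k)\to A_i(n,k)$ from universality (since the generators of $A_i(n,k)$ satisfy the defining relations of $A_l(n,k)$) and then push the $A_l(n,k)$-invariance identity through it. Your version is somewhat more explicit about the intertwining relations $(id\otimes\pi)\circ\alpha^{(fp)}_{n,k}=\alpha_{n,k}$ and $\pi\circ(\mu\otimes id)=(\mu\otimes id)\circ(id\otimes\pi)$, but the content is the same.
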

\begin{proof}  
Let $\{u_{i,j}\}_{i=1,...,n;j=1,...,k}$ be generators of  $A_i(n,k)$ and $\{u'_{i,j}\}_{i=1,...,n;j=1,...,k}$ be generators of  $A_l(n,k)$. Then, there is a well defined $C^*$-homomorphism $\beta:A_l(n,k)\rightarrow A_i(n,k)$ such that $\beta(u_{i,j}=u'_{i,j})$. The existence of $\beta$ is given by the universality of $A_l(n,k)$. Since $\mu_{x_1,...,x_n}$ is $A_l(n,k)$-invariant, for all monomials $p=X_{i_1}\cdots X_{i_m}\in\C\langle X_1,...,X_k\rangle$, we have 
$$\mu_{x_1,...,x_k}(p)1_{A_{l}(n,k)}= \mu_{x_1,...x_n}\otimes id_{A_l(n,k)}(\alpha^{(fp)}_{n,k}(p))=\sum\limits_{j_1,...,j_m} \phi(x_{j_1}\cdots x_{j_m})u_{j_1,i_1}\cdots u_{j_m,i_m}.$$
Apply $\beta$ on both sides of the above equation, we have 
$$\mu_{x_1,...,x_k}(p)1_{A_{i}(n,k)}=\sum\limits_{j_1,...,j_m} \phi(x'_{j_1}\cdots x_{j_m})u'_{j_1,i_1}\cdots u_{j_m,i_m}=\mu_{x_1,...x_n}\otimes id_{A_i(n,k)}(\alpha_{n,k}(p)).$$
The proof is complete.
\end{proof}
The same, we have 
\begin{corollary}
$\mu_{x_1,...,x_n}$ is $(n,k)$-boolean spreadable if it is $B_l(n,k)$-invariant.
\end{corollary}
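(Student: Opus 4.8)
The plan is to mimic the argument just given for quantum spreadability, exploiting that the boolean increasing space $B_i(n,k)$ is obtained from the boolean exchangeability semigroup $B_l(n,k)$ by imposing one additional family of relations, namely the increasing condition $\ub_{i,j}\ub_{i',j'}=0$ for $j<j'$ and $i\geq i'$. First I would note that the standard generators of $B_i(n,k)$, together with its invariant projection $\p$, already satisfy every defining relation of $B_l(n,k)$: each $\ub_{i,j}$ is an orthogonal projection and $\sum_{i=1}^n\ub_{i,j}\p=\p$ for every $1\leq j\leq k$. By the universal property of $B_l(n,k)$ this yields a $C^*$-homomorphism $\beta:B_l(n,k)\to B_i(n,k)$ with $\beta(u_{i,j})=\ub_{i,j}$ and, crucially, $\beta(\p)=\p$; in other words $B_i(n,k)$ is a quotient of $B_l(n,k)$ compatible with the invariant projection.

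Next I would transport the hypothesis through $\beta$. For an arbitrary monomial $p=X_{i_1}\cdots X_{i_m}\in\C\langle X_1,\dots,X_k\rangle$, unwinding the coaction $\alpha^{(bp)}_{n,k}$ rewrites the $B_l(n,k)$-invariance of $\mu_{x_1,\dots,x_n}$ as
\[
\mu_{x_1,\dots,x_k}(p)\,\p=\p\Big(\sum_{j_1,\dots,j_m=1}^{n}\phi(x_{j_1}\cdots x_{j_m})\,u_{j_1,i_1}\cdots u_{j_m,i_m}\Big)\p .
\]
Applying $\beta$ to both sides, and using that $\beta$ is an algebra homomorphism fixing $\p$ while leaving the scalars $\phi(x_{j_1}\cdots x_{j_m})$ untouched, the right-hand side becomes the same sum with every $u_{j,i}$ replaced by $\ub_{j,i}$, which is exactly $\p\,\mu_{x_1,\dots,x_n}\otimes\mathrm{id}_{B_i(n,k)}\big(\alpha^{(b)}_{n,k}(p)\big)\p$, while the left-hand side is unchanged because $\beta(\p)=\p$ and $\mu_{x_1,\dots,x_k}(p)$ is a scalar. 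This is precisely the $(n,k)$-boolean spreadability identity for $p$; by linearity it then holds for all $p\in\C\langle X_1,\dots,X_k\rangle$, which is the claim.

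I do not expect a serious obstacle: the whole content is that the quotient map $\beta$ intertwines the two coactions and fixes $\p$, so the invariance identity passes verbatim from $B_l(n,k)$ to $B_i(n,k)$. The only point that genuinely needs care, the boolean analogue of a subtlety absent in the unital quantum setting, is bookkeeping of the invariant projection $\p$: one must check $\beta(\p)=\p$ so that the $\p$-sandwiched equalities defining $B_l(n,k)$-invariance are carried to the $\p$-sandwiched equalities defining boolean spreadability, rather than to some weaker unsandwiched statement.
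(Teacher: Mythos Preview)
Your proposal is correct and is exactly the argument the paper has in mind: the paper states this corollary with the phrase ``The same, we have'' immediately after the analogous lemma for $A_l(n,k)\to A_i(n,k)$, leaving the boolean case as the evident adaptation you have written out. Your only addition to the free case---tracking that $\beta(\p)=\p$ so the $\p$-sandwich survives---is precisely the point that distinguishes the boolean setting, and you handle it correctly.
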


\begin{corollary}
$(x_1,...,x_n)$ is  boolean spreadable if it is boolean exchangeable. $(x_1,...,x_n)$ is quantum spreadable if it is quantum exchangeable.
\end{corollary}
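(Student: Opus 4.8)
The plan is to chain together the three consistency facts just established, reading \emph{boolean exchangeable} as $B_l(n,n)$-invariance and \emph{quantum exchangeable} as $A_l(n,n)$-invariance; this reading is legitimate because $B_l(n,n)=B_s(n)$ and $A_l(n,n)=A_s(n)$ by the remarks preceding the statement, and because exchangeability of an $n$-tuple is invariance under the full (quantum permutation / boolean) structure on $n$ points.

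First I would treat the quantum case. Suppose $(x_1,\dots,x_n)$ is quantum exchangeable, i.e.\ $\mu_{x_1,\dots,x_n}$ is $A_l(n,n)$-invariant. The Proposition stating that $A_l(n,k)$-invariance follows from $A_l(n,k+1)$-invariance applies for every $k$ with $n>k$, so iterating it downward, starting from $k=n-1$ (which uses $A_l(n,n)$-invariance) and descending step by step to $k=1$, yields that $\mu_{x_1,\dots,x_n}$ is $A_l(n,k)$-invariant for every $1\le k\le n-1$. The Lemma relating $A_l(n,k)$-invariance to $(n,k)$-quantum spreadability then upgrades each of these to $A_i(n,k)$-spreadability. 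Since this holds for all $k=1,\dots,n-1$, the tuple $(x_1,\dots,x_n)$ is quantum spreadable by definition.

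The boolean case is formally identical, with the Proposition replaced by its Corollary ($B_l(n,k)$-invariance follows from $B_l(n,k+1)$-invariance) and the Lemma replaced by the Corollary relating $B_l(n,k)$-invariance to $(n,k)$-boolean spreadability. Starting from $B_l(n,n)$-invariance, the same downward iteration gives $B_l(n,k)$-invariance for all $1\le k\le n-1$, and each such invariance yields $B_i(n,k)$-spreadability; hence $(x_1,\dots,x_n)$ is boolean spreadable.

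I do not anticipate any genuine obstacle: the argument is a bookkeeping assembly of the already-proven consistency chain. The only point requiring care is the base and the coverage of the descent, namely that exchangeability is correctly identified with invariance under the top algebra $A_l(n,n)$ (resp.\ $B_l(n,n)$), and that the downward-step Proposition (resp.\ Corollary) is invoked at every intermediate level so that no gap is left between $k=n$ and the full range $k=1,\dots,n-1$ demanded by the definition of spreadability.
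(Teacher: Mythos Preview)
Your proposal is correct and is precisely the argument the paper intends: the Corollary is stated without proof immediately after the Proposition ($A_l(n,k+1)\Rightarrow A_l(n,k)$), its boolean Corollary, the Lemma ($A_l(n,k)\Rightarrow A_i(n,k)$), and its boolean Corollary, and your chaining of these results together with the identifications $A_l(n,n)=A_s(n)$ and $B_l(n,n)=B_s(n)$ from the preceding Remarks is exactly the intended deduction.
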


In summary, for fixed $n,k\in\mathbb{N}$ such that $k<n$, we have the following diagrams:

\begin{displaymath}
    \xymatrix{ B(n,n)_\text{inv}\ar[r]\ar[dd]& B_l(n,k)_\text{inv} \ar[r]\ar[dd] & B_i(n,k)_\text{inv} \ar[d] \\  
                         &    & M_i(n,k)_\text{inv} \ar[d] \\ 
                A(n,n)_\text{inv}\ar[r] & A_l(n,k)_\text{inv} \ar[r] & A_i(n,k)_\text{inv}  \\ }
\end{displaymath}

and

\begin{displaymath}
    \xymatrix{ Booolean\,\, exchangeability \ar[r]\ar[dd]&  Boolean\,\, spreadability \ar[d] \\  
                         &   Monotone\,\, spreadability\ar[d] \\ 
                Quantum\,\, exchangeability \ar[r] & quantum\,\,spreadability  \\ }
\end{displaymath}
The arrow \lq\lq$\text{condtion a)}\rightarrow \text{condition b)}$" means that condition a) implies condition b).

\section{Monotonically equivalent sequences }
In order to study monotone spreadability, we need find  relations between mixed moments of monotonically spreadable sequences of random variables. Since all the mixed moments can be denoted by  finite sequences of indices, we will turn to study finite sequences of ordered indices. In this section, we introduce an equivalent relation, which has a deep relation with monotone spreadability, on finite sequences of ordered indices.   
\begin{definition}\normalfont
Given two pairs of integers $(a, b)$, $(c, d)$ , we say these two pairs have the same order if  $a-b, c-d$ are both  positive or negative or 0. 
\end{definition}
For example, $(1, 2)$ and $(3, 5)$ have the same order but $(1, 2)$ and $(5, 3)$ do not have the same order.

\begin{definition}\normalfont  Let $\mathbb{Z}$ be the set of integers with natural order and  $\mathbb{Z}^L=\mathbb{Z}\times \cdots \times \mathbb{Z}$ be the set of finite sequences of length $L$. We define a partial relation $\sim_m$ on $\mathbb{Z}^L$.  Given two sequences of indices $\I=\{i_1,....,i_L\}, \J=\{j_1,...,j_L\}\in \mathbb{Z}^L$. If for all $1\leq l_1<l_2\leq L$ such that $i_{l_3}> \max\{i_{l_1}, i_{l_2}\}$ for all $l_1<l_3<l_2$,  $(i_{l_1},i_{l_2})$ and $(j_{l_1},j_{l_2})$ have the same order, then we denote $\I\sim_m\J$. 
\end{definition}
{\bf Example:}   $(5,3,4)\sim_m(5,3,5)$ but $(5,6,4)\not\sim_m(5,6,5)$. 

\begin{remark}
In general, the relation can be defined on any ordered set but not only $\mathbb{Z}$. We will show this partial relation is exactly an equivalence relation on the set of finite sequences of ordered indices.
\end{remark}
It follows the definition that $(i_{l},i_{l+1})$ and $(j_{l},j_{l+1})$  have the same order for all $1\leq l<L$   if $\I\sim_m \J$.\\
Now we turn to show that $\sim_m$ is actually an equivalent relation.  To achieve it,  we need to show  that the relation $\sim_m$ is  reflexive, symmetric and transitive.\\
(Reflexivity) First, reflexivity  is obvious, because a pair $(i_{l_1}, i_{l_2})$  always has the same order with itself. 
\begin{lemma}\label{symmetry}
(Symmetry) Let  $\I=\{i_1,....,i_L\}, \J=\{j_1,...,j_L\}\in \mathbb{Z}^L$ such that $\I\sim_m\J$, then $\J\sim_m\I$. 
\end{lemma}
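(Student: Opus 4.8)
The plan is to reduce the statement to a single structural fact: that the ``peak condition'' built into the definition of $\sim_m$ can be transferred from $\J$ to $\I$. Since ``having the same order'' is a symmetric relation between pairs of integers, $\J\sim_m\I$ will follow once I show that every pair of positions $1\le l_1<l_2\le L$ for which $j_{l_3}>\max\{j_{l_1},j_{l_2}\}$ holds for all $l_1<l_3<l_2$ (call this the $\J$-peak condition) automatically satisfies the corresponding $\I$-peak condition $i_{l_3}>\max\{i_{l_1},i_{l_2}\}$. Indeed, once the $\I$-peak condition is known for such a pair, the hypothesis $\I\sim_m\J$ forces $(i_{l_1},i_{l_2})$ and $(j_{l_1},j_{l_2})$ to have the same order, which is exactly what $\J\sim_m\I$ demands there.

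The heart of the argument is therefore to prove, under the standing hypothesis $\I\sim_m\J$, that the $\J$-peak condition for $(l_1,l_2)$ forces $i_m>\max\{i_{l_1},i_{l_2}\}$ for every intermediate $m$. First I record the base fact used throughout: for adjacent positions the peak condition is vacuous, so $\I\sim_m\J$ already gives that $(i_l,i_{l+1})$ and $(j_l,j_{l+1})$ share the same order. I then show $i_m>i_{l_1}$ for every $l_1<m<l_2$ by induction on $m$, with the strengthened hypothesis $Q(m)$ that $i_{m'}>i_{l_1}$ for all $l_1<m'\le m$. The base case $m=l_1+1$ holds because $j_{l_1+1}>j_{l_1}$ (it is an intermediate index, so the $\J$-peak condition applies) and adjacency transfers this to $i_{l_1+1}>i_{l_1}$. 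For the inductive step, suppose toward a contradiction that $i_m\le i_{l_1}$. Then $\max\{i_{l_1},i_m\}=i_{l_1}$, while by $Q(m-1)$ every intermediate index $m'$ of the pair $(l_1,m)$ satisfies $i_{m'}>i_{l_1}$; hence $(l_1,m)$ satisfies the $\I$-peak condition. Applying $\I\sim_m\J$ to this pair, $(i_{l_1},i_m)$ has the same order as $(j_{l_1},j_m)$, which is strictly increasing since $m$ is intermediate for $(l_1,l_2)$; thus $i_{l_1}<i_m$, contradicting $i_m\le i_{l_1}$. This establishes $Q(m)$ and hence $i_m>i_{l_1}$ for all intermediate $m$.

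A symmetric downward induction on $m$ from $l_2-1$, now using the pairs $(m,l_2)$ together with the relation $j_m>j_{l_2}$, gives $i_m>i_{l_2}$ for every intermediate $m$. Combining the two one-sided estimates yields $i_m>\max\{i_{l_1},i_{l_2}\}$ for all $l_1<m<l_2$, i.e.\ the $\I$-peak condition for $(l_1,l_2)$, which completes the reduction and hence the proof. The one delicate point—where the asymmetry of the definition genuinely bites—is the inductive step above: the $\J$-peak condition never directly bounds the $i$'s, so the $\I$-peak condition must be bootstrapped one index at a time, each new intermediate being certified large by turning an already-established partial peak into an instance of the $\I$-peak condition and feeding it through $\I\sim_m\J$. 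I expect this bootstrapping to be the only real obstacle; the right-hand induction is its exact mirror, and the adjacency base case is immediate.
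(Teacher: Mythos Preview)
Your proof is correct and follows essentially the same strategy as the paper's: both reduce to showing that the $\J$-peak condition for a pair $(l_1,l_2)$ forces the $\I$-peak condition for that pair, and both bootstrap this one intermediate index at a time by forming an auxiliary pair (your $(l_1,m)$, the paper's $(l_1,l_3)$) that already satisfies the $\I$-peak condition and applying $\I\sim_m\J$ there. The only difference is packaging---you run an explicit forward induction for $i_m>i_{l_1}$ and a mirror backward induction for $i_m>i_{l_2}$, whereas the paper phrases the same step as ``choose the smallest violating $l_3$'' under a WLOG on which of $i_{l_1},i_{l_2}$ is larger; your two-sided treatment is arguably cleaner since it avoids justifying that WLOG.
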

\begin{proof}
Suppose that $\J\not\sim_m\I$. 
Then,  there exist two natural numbers  $1\leq l_1<l_2\leq L$ such that $$j_{l_3}> \max\{j_{l_1}, j_{l_2}\}$$ for all $l_1<l_3<l_2$, but  $(j_{l_1},j_{l_2})$ and $(i_{l_1},i_{l_2})$ do not have the same order.  Fix $l_1$,  we choose the  smallest $l_2$ which satisfies the above property.  Notice that $\I\sim_m\J$, $(j_{l_1},j_{l_1+1})$ and $(i_{l_1},i_{l_1+1})$ have the same order, then 
$$l_2\neq l_1+1.$$
  According to our assumption,  we have $$j_{l'_3}> \max\{j_{l_1},j_{l_2}\}$$
  for  $l_1<l'_3<l_2$. \\
Suppose that there exists an $l_3$  between $l_1$ and $l_2$ such that $$i_{l_3}\leq \max\{i_{l_1},i_{l_2}\}.$$ 
Without loss of generality, we assume that $$i_{l_1}\geq i_{l_2},$$then $$i_{l_3}\leq i_{l_1}.$$ Again, among these $l_3$, we choose the smallest one. Then, we have  $i_l> i_{l_1}\geq i_{l_3}$ for $$l_1<l<l_3.$$
Since  $\I\sim_m\J$, $(i_{l_1},i_{l_3})$ and $(j_{l_1},j_{l_3})$ must have the same order, but $i_{l_1}\geq i_{l_3}$ and $i_{l_1}<j_{l_3}$. It contradicts the existence of our $l_3$. Hence, $i_{l'_3}>\max\{i_{l_1},i_{l_2}\}$ for all $l_1<l'_3<l_2$.  It follows that   $(i_{l_1},i_{l_2})$ and $(j_{l_1},j_{l_2})$ have the same order. But, it contradicts our original assumption.  Therefore, $\J\sim_m\I$.\\
\end{proof}

\begin{lemma} Given two sequences  $\I=\{i_1,....,i_L\}, \J=\{j_1,...,j_L\}\in \mathbb{Z}^L$ such that $I\sim_m\J$. Let $1\leq l_1<l_2\leq L$ such that $i_{l_3}>\max{i_{l_1},i_{l_2}}$ for all $l_1<l_3<l_2$. Then, we have $$j_{l_3}>\max\{j_{l_1},j_{l_2}\}$$ for all $l_1<l_3<l_2$.
\end{lemma}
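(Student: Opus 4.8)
The plan is to deduce the claim from the symmetry of $\sim_m$ (Lemma \ref{symmetry}) together with a minimality argument in the same spirit as the one used there. First I would invoke Lemma \ref{symmetry} to pass from $\I\sim_m\J$ to $\J\sim_m\I$; this is the crucial preparatory step. The hypothesis only controls valley pairs of $\I$, i.e. pairs $(a,b)$ with $i_l>\max\{i_a,i_b\}$ for $a<l<b$, whereas the conclusion concerns the behaviour of the $j$'s. It is precisely $\J\sim_m\I$ that will let me read off the order of $(i_a,i_b)$ from a valley pair $(a,b)$ of $\J$, which is the direction I need.

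Then I would argue by contradiction. Suppose some $l_1<l_3<l_2$ violates $j_{l_3}>\max\{j_{l_1},j_{l_2}\}$. I split into two cases according to whether $\max\{j_{l_1},j_{l_2}\}$ is attained at $l_1$ or at $l_2$. In the first case, $j_{l_1}\geq j_{l_2}$, so there is an offending index with $j_{l_3}\leq j_{l_1}$, and I would choose the smallest such $l_3$. By minimality every $l$ with $l_1<l<l_3$ is not offending, hence $j_l>\max\{j_{l_1},j_{l_2}\}=j_{l_1}=\max\{j_{l_1},j_{l_3}\}$, so that $(l_1,l_3)$ is a genuine valley pair of $\J$. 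In the second case, $j_{l_1}<j_{l_2}$, and I would instead choose the largest offending index, which makes $(l_3,l_2)$ a valley pair of $\J$ by the symmetric argument. The point of selecting the extremal offending index is exactly to manufacture a valley pair, since an arbitrary subinterval of a valley pair need not itself be one.

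Having produced a valley pair of $\J$, I would apply $\J\sim_m\I$ to conclude that its endpoints have the same order in $\I$. In the first case $(j_{l_1},j_{l_3})$ and $(i_{l_1},i_{l_3})$ have the same order, and since $j_{l_1}\geq j_{l_3}$ this forces $i_{l_1}\geq i_{l_3}$. But the hypothesis that $(l_1,l_2)$ is a valley pair of $\I$ gives $i_{l_3}>\max\{i_{l_1},i_{l_2}\}\geq i_{l_1}$, a contradiction. The second case is entirely parallel, using $(j_{l_3},j_{l_2})$ versus $(i_{l_3},i_{l_2})$ and the inequality $i_{l_3}>i_{l_2}$. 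This contradiction shows that no offending index exists, which is exactly the assertion.

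The step I expect to be the main obstacle is the honest verification that the extremal choice really yields a valley pair of $\J$, i.e. that minimality (respectively maximality) of the offending index forces all strictly intermediate $j$'s to exceed the relevant endpoint. Once this is in place, the remainder is the same bookkeeping—comparison of orders combined with the $\I$-valley inequality—that already appears in the proof of Lemma \ref{symmetry}, so I would keep that part brief and simply record the two cases.
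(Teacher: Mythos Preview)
Your proposal is correct and follows essentially the same argument as the paper: contradiction, case split on which of $j_{l_1},j_{l_2}$ is larger, extremal choice of the offending index to manufacture a valley pair of $\J$, then apply $\J\sim_m\I$ (via Lemma~\ref{symmetry}) to contradict the hypothesis $i_{l_3}>\max\{i_{l_1},i_{l_2}\}$. Your write-up is in fact slightly more careful than the paper's in justifying why the extremal choice gives $j_l>\max\{j_{l_1},j_{l_3}\}$ for the intermediate $l$'s.
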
\label{connected}
\begin{proof}
If the statement is false, then there exists $l_3$ between $l_1$ and $l_2$ such that $$j_{l_3}\leq \max \{j_{l_1},j_{l_2}\}.$$ Suppose $j_{l_1}\geq j_{l_2}$, then $$j_{l_3}\leq j_{l_1}.$$  Among all these $l_3$, we take the smallest one. Then, we have $$j_{l_4}>\max\{j_{l_1},j_{l_3}\}$$ for all  $ l_1<l_4<l_3$.  By Lemma\ref{symmetry}, $\J\sim_m \I$ since $\I\sim_{m}\J$.  Therefore, $(j_{l_1},j_{l_3})$ and $(i_{l_1},i_{l_3})$ must have the same order which means  $$i_{l_{1}}\geq i_{l_3}.$$ 
This is a contradiction. If we assume that $j_{l_1}<j_{l_2}$, then we just need  to consider the largest one among those $l_3$ and  we will get the same contradiction. The proof is complete.
\end{proof}

\begin{lemma}\label{Trasitivity}
(Transitivity)Given  three sequences $\I=\{i_1,....,i_L\}, \J=\{j_1,...,j_L\}, \Q=\{q_1,...q_L\}\in \mathbb{Z}^L$, such that $\I\sim_m\J$ and $\J\sim_m\Q$, then $\I\sim_m\J$
\end{lemma}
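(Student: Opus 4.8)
The plan is to reduce transitivity directly to two facts already in hand: the defining property of $\sim_m$ preserves the order type of an admissible endpoint pair, and the preceding lemma (Lemma \ref{connected}) shows that the underlying ``valley'' condition is itself preserved across $\sim_m$. Given $\I\sim_m\J$ and $\J\sim_m\Q$, I want $\I\sim_m\Q$, and the statement ``$\I\sim_m\J$'' at the end of the display is of course a typo for ``$\I\sim_m\Q$''. First I would fix an arbitrary admissible pair of positions for $\I$: choose $1\leq l_1<l_2\leq L$ with $i_{l_3}>\max\{i_{l_1},i_{l_2}\}$ for all $l_1<l_3<l_2$. The goal is to show that $(i_{l_1},i_{l_2})$ and $(q_{l_1},q_{l_2})$ have the same order; since $l_1,l_2$ range over all such admissible pairs, this is exactly $\I\sim_m\Q$.

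Next I would exploit $\I\sim_m\J$ in two distinct ways for this single pair. On one hand, the definition of $\sim_m$ applied to the admissible pair $(l_1,l_2)$ gives that $(i_{l_1},i_{l_2})$ and $(j_{l_1},j_{l_2})$ have the same order. On the other hand, Lemma \ref{connected}, applied to $\I\sim_m\J$ and this same pair, yields $j_{l_3}>\max\{j_{l_1},j_{l_2}\}$ for all $l_1<l_3<l_2$; in other words, $(l_1,l_2)$ is also an admissible pair for $\J$. This transfer of admissibility is the whole point of having proved Lemma \ref{connected} (itself obtained from the symmetry Lemma \ref{symmetry}).

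With $(l_1,l_2)$ now known to be admissible for $\J$, I can invoke the hypothesis $\J\sim_m\Q$ to conclude that $(j_{l_1},j_{l_2})$ and $(q_{l_1},q_{l_2})$ have the same order. Finally, I would note that ``having the same order'' is a transitive relation, since it records only the sign of the difference of the two entries; chaining the comparison $(i_{l_1},i_{l_2})\leftrightarrow(j_{l_1},j_{l_2})$ with $(j_{l_1},j_{l_2})\leftrightarrow(q_{l_1},q_{l_2})$ gives that $(i_{l_1},i_{l_2})$ and $(q_{l_1},q_{l_2})$ have the same order, and the proof is complete.

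The step I expect to be the crux is the middle one. A priori the definition of $\sim_m$ only permits comparing two endpoints whose enclosed entries all exceed both of them, and there is no reason the admissible pairs for $\I$ should remain admissible for $\J$; without this, one could not legitimately apply the hypothesis $\J\sim_m\Q$ to the pair $(l_1,l_2)$. It is precisely Lemma \ref{connected} that supplies the missing link, ensuring the valley structure transfers so that the one admissible pair is usable simultaneously for $\I\sim_m\J$ and for $\J\sim_m\Q$. Everything else is routine bookkeeping with the three possible order types.
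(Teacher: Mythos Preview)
Your proof is correct and follows essentially the same route as the paper: fix an admissible pair for $\I$, use Lemma~\ref{connected} to transfer admissibility to $\J$, then chain the two same-order conclusions from $\I\sim_m\J$ and $\J\sim_m\Q$. You also correctly note the typo in the statement (the conclusion should read $\I\sim_m\Q$).
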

\begin{proof}
Given  $1\leq l_1<l_2\leq L$ such that $$i_{l_3}>\max\{i_{l_1},i_{l_2}\}$$ for all $l_1<l_3<l_2$. By Lemma \ref{connected},  we have
$$j_{l_3}>\max\{j_{l_1},j_{l_2}\}$$ for all $l_1<l_3<l_2$. 
It follows the definition that $(i_{l_1},i_{l_2})$, $(j_{l_1},j_{l_2})$ have the same order and $(j_{l_1},j_{l_2})$, $(q_{l_1},q_{l_2})$ have the same order. Therefore, $(i_{l_1},i_{l_2})$, $(q_{l_1},q_{l_2})$ have the same order. Since $l_1,l_2$ are arbitrary, it completes the proof.
\end{proof}
By now, we have shown that the relation $\sim_m$ is reflexive, symmetric and transitive. Therefore, we have
\begin{proposition}
$\sim_m$ is an equivalence relation on $\mathbb{Z}^L$.
\end{proposition}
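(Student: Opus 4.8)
The plan is to assemble the proposition directly from the three defining properties of an equivalence relation, each of which has been established separately for $\sim_m$ in the preceding discussion. Recall that a relation is an equivalence relation precisely when it is reflexive, symmetric, and transitive, so it suffices to verify these three in turn and then conclude.

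First I would dispose of reflexivity, which is immediate: for any $\I\in\mathbb{Z}^L$ and any relevant pair of indices $(l_1,l_2)$, the pair $(i_{l_1},i_{l_2})$ trivially has the same order as itself, so $\I\sim_m\I$. This needs no real argument. Symmetry is then exactly the content of Lemma~\ref{symmetry}, and transitivity is exactly Lemma~\ref{Trasitivity}; both may be invoked verbatim.

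The substantive work therefore lies entirely in those two lemmas, and in particular in the auxiliary Lemma~\ref{connected}. The delicate point is that the definition of $\sim_m$ quantifies only over the \emph{relevant} pairs $(l_1,l_2)$, namely those whose strictly intermediate entries all exceed $\max\{i_{l_1},i_{l_2}\}$, and a priori the set of relevant pairs determined by $\I$ need not coincide with the one determined by $\J$. Lemma~\ref{connected} is precisely what guarantees that this relevant-pair structure transfers from $\I$ to $\J$ whenever $\I\sim_m\J$; this is what makes the two-pair order condition well posed symmetrically on both sequences. Once it is available, symmetry follows by a contradiction argument on a minimally chosen offending pair, and transitivity follows because ``having the same order'' is itself a transitive relation on individual pairs.

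Having verified reflexivity, symmetry (Lemma~\ref{symmetry}), and transitivity (Lemma~\ref{Trasitivity}), the proposition follows immediately from the definition of an equivalence relation. The genuine obstacle — reconciling the possibly different peak structures of two related sequences — has already been absorbed into Lemma~\ref{connected}, so at the level of the proposition itself there is nothing further to prove; it is a one-line assembly of the three properties.
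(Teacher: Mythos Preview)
Your proposal is correct and matches the paper's approach exactly: the paper's proof of this proposition is simply the one-line observation that reflexivity, symmetry (Lemma~\ref{symmetry}), and transitivity (Lemma~\ref{Trasitivity}) have already been established. Your additional commentary on the role of Lemma~\ref{connected} is accurate and even a bit more explanatory than the paper itself.
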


As we mentioned before,  $\mathbb{Z}$ can be replaced by any ordered set $I$. When there is no confusion, we always use  $\sim_m$ to denote the monotone equivalence relation on $I^L$ for ordered set $I$ and positive integers $L$. For example, $I$ can be $[n]=\{1,...,n\}$.

\begin{definition}\normalfont Let $\I=(i_1,...,i_L)$ be a sequence of ordered indices.  An ordered subsequence  $(i_{l'_1},...,i_{l'_2})$ of $\I$ is called an interval if the sequence contains all the elements $i_{l'_3}$ whose position  $l'_3$ is between $l'_1$ and $l'_2$. An interval  $(i_{l'_1},...,i_{l'_2})$ of  $\I$  is called a crest if $i_{l_1'}=i_{l_1'+1}\cdots=i_{l'_2}>\max\{ i_{l_1'-1},i_{l_2'+1}\}$. In addition , we assume that  $i_0<i_1$ and $i_L>i_{L+1}$ even though $i_0, i_{L+1}$ are not in $\I$. 
\end{definition}
{\bf Example:}   $(1,2,3,4)$  has one crest of length 1, namely $(4)$. $(1,2,1,3,4,4,3,5)$ has 3 crests $(2),(4,4),(5)$ and $2$ is the first peak of the sequence. $(1,1,1,1,1)$ has one crest $(1,1,1,1,1)$ which is the sequence itself, because we assumed $i_0<i_1$ and $i_6<i_5$.

\begin{lemma}
Given $\I=(i_1,...,i_L)\in\mathbb{Z}^L$, $\I$ has at least one crest.
\end{lemma}
\begin{proof}
Since $\I$ consists of  finite elements, it has a maximal one, i.e. $i_l$ such that $i_l\geq i_{l'}$ for $1\leq l'\leq L$ . It is obvious that $i_l$ must be contained in an interval $(i_{l'_1},...,i_{l_2'}) $ such that $$i_{l_1'}=i_{l_1'+1}\cdots=i_{l'_2}=i_{l'}$$ and $$i_{l'}>\max\{ i_{l_1'-1},i_{l_2'+1}\}.$$  Therefore, $\I$ contains a crest.
\end{proof}

\begin{lemma}\label{remove }
Given two index sequences $\I,\J\in\mathbb{Z}^L$ such that $\I\sim_m\J$.  If $(i_{l'_1},...,i_{l'_2})$ is a crest of $\I$, then $(j_{l'_1},...,j_{l'_2})$ is a crest of $\J$
\end{lemma}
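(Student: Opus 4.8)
The plan is to reduce the whole statement to the elementary observation that $\I\sim_m\J$ forces every \emph{consecutive} pair $(i_l,i_{l+1})$ and $(j_l,j_{l+1})$ to have the same order. This is immediate from the definition of $\sim_m$: taking $l_1=l$ and $l_2=l+1$, the hypothesis ``$i_{l_3}>\max\{i_{l_1},i_{l_2}\}$ for all $l_1<l_3<l_2$'' is vacuously satisfied (there is no index strictly between $l$ and $l+1$), so $(i_l,i_{l+1})$ and $(j_l,j_{l+1})$ are required to have the same order. The key point of the proof is that the notion of crest is purely local: it constrains only the internal equalities $i_{l'_1}=\cdots=i_{l'_2}$ together with the two strict comparisons $i_{l'_1}>i_{l'_1-1}$ and $i_{l'_2}>i_{l'_2+1}$ against the immediate neighbours, so this consecutive-pair observation is all that is needed.

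First I would transfer the plateau. For each $l$ with $l'_1\le l<l'_2$ we have $i_l=i_{l+1}$; since the pair $(j_l,j_{l+1})$ must have the same order and ``equal'' is one of the three order types, we get $j_l=j_{l+1}$. Chaining these equalities gives $j_{l'_1}=j_{l'_1+1}=\cdots=j_{l'_2}$ (trivially true when the crest has length one).

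Next I would transfer the two boundary inequalities. If $l'_1>1$, the crest condition gives $i_{l'_1-1}<i_{l'_1}$, and applying the consecutive-pair observation at positions $l'_1-1,l'_1$ yields $j_{l'_1-1}<j_{l'_1}$; if $l'_1=1$, the standing convention $j_0<j_1$ gives the same inequality. Symmetrically, $i_{l'_2}>i_{l'_2+1}$ (or the convention $j_L>j_{L+1}$ when $l'_2=L$) yields $j_{l'_2}>j_{l'_2+1}$. Combining the plateau equalities with these two strict inequalities gives $j_{l'_1}=\cdots=j_{l'_2}>\max\{j_{l'_1-1},j_{l'_2+1}\}$, i.e.\ $(j_{l'_1},\ldots,j_{l'_2})$ is a crest of $\J$.

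I do not anticipate a genuine obstacle; the only care needed is bookkeeping at the ends of the sequence — invoking the boundary conventions $j_0<j_1$ and $j_L>j_{L+1}$ when the crest touches position $1$ or $L$ — and noting that the strict inequalities against the neighbours already force the plateau to be maximal, so no separate maximality step is required. Notably, the full strength of $\sim_m$ never enters the argument; only its consecutive-pair consequence is used.
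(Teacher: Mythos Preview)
Your proof is correct and follows the same approach as the paper's: both reduce to the observation that $\I\sim_m\J$ forces consecutive pairs $(i_l,i_{l+1})$ and $(j_l,j_{l+1})$ to have the same order, then transfer the plateau equalities and the two boundary inequalities directly. Your version is simply more detailed, in particular making explicit the boundary conventions when the crest touches position $1$ or $L$.
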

\begin{proof}
Since $\I\sim_m\J$,  all consecutive  pairs $(i_l,i_{l+1})$ and $(j_l,j_{l+1})$  have the order.  According to the definition, we have 
$$i_{l'_1-1}<i_{l_1'}=i_{l_1'+1}\cdots=i_{l'_2}>j_{l'_2+1} $$
If follows that 
$$j_{l'_1-1}<j_{l_1'}=j_{l_1'+1}\cdots=j_{l'_2}>j_{l'_2+1} ,$$
thus $(j_{l'_1},...,j_{l'_2})$ is a crest of $\J$.
 
\end{proof}

Now, we will introduce some $\sim_m$ preserving operations on index sequences. The first operation is to remove a crest from a sequence.  Let $(i_{l'_1},...,i_{l'_2})$ be an interval of $\I=(i_1,...,i_L)$, we denote by $\I\setminus(i_{l'_1},...,i_{l'_2})$ the new sequence $(i_1,...,i_{l'_1-1},i_{l'_2+1},...,i_L) $.  We denote by the empty set $\emptyset=\I\setminus \I$ and  we assume $\emptyset\sim_m\emptyset$.

\begin{lemma}\label{remove1 }
Let $\I=(i_1,....,i_l), \J=(j_1,...,j_L)\in \mathbb{Z}^L$ such that $\I\not\sim_m\J$. If $(i_{l'_1},...,i_{l'_2})$ is a crest of $\I$ and $(j_{l'_1},...,j_{l'_2})$ is a crest of $\J$.  Then, 
$$\I\setminus(i_{l'_1},...,i_{l'_2})\not\sim_m \J\setminus (j_{l'_1},...,j_{l'_2})$$
\end{lemma}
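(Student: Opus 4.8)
The plan is to prove the implication directly, by transforming a ``witness'' for $\I\not\sim_m\J$ into a witness for $\I\setminus(i_{l'_1},\dots,i_{l'_2})\not\sim_m \J\setminus(j_{l'_1},\dots,j_{l'_2})$. Write $C=\{l'_1,\dots,l'_2\}$ for the common set of positions occupied by the two crests and $h=i_{l'_1}$ for the height of the crest of $\I$. Unwinding the definition of $\sim_m$, the hypothesis $\I\not\sim_m\J$ furnishes a pair of positions $l_1<l_2$ which is \emph{checked} in $\I$, meaning $i_{l_3}>\max\{i_{l_1},i_{l_2}\}$ for all $l_1<l_3<l_2$, and for which $(i_{l_1},i_{l_2})$ and $(j_{l_1},j_{l_2})$ do not have the same order; I call such a pair a witness. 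The goal is to produce a checked pair of the truncated sequence whose endpoint pairs in the two truncations again disagree in order.

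First I would record the crucial structural claim: if $(l_1,l_2)$ is any checked pair of $\I$ with $\{l_1,l_2\}\cap C\neq\emptyset$, then $l_2=l_1+1$. This is a short contradiction argument. Suppose $l_2\geq l_1+2$, so a strictly-between position exists. If $l_1\in C$, then the position $l_1+1$ satisfies $i_{l_1+1}\leq h=i_{l_1}\leq\max\{i_{l_1},i_{l_2}\}$ (it equals $h$ when $l_1+1\in C$, and is $<h$ when $l_1=l'_2$), contradicting the strict inequality demanded of a between-element; the case $l_2\in C$ is symmetric using $l_2-1$. Hence any checked pair meeting the crest is one of the consecutive pairs $(l'_1-1,l'_1)$, an internal pair $(l,l+1)$ with $l'_1\leq l<l'_2$, or $(l'_2,l'_2+1)$.

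Next I would observe that each of these boundary consecutive pairs has the same order in $\I$ as in $\J$, directly from the two crest conditions: $(l'_1-1,l'_1)$ is increasing in both sequences because $i_{l'_1-1}<h$ and $j_{l'_1-1}<j_{l'_1}$, the internal pairs are constant in both, and $(l'_2,l'_2+1)$ is decreasing in both because $h>i_{l'_2+1}$ and $j_{l'_2}>j_{l'_2+1}$ (this is the same bookkeeping that underlies Lemma \ref{remove }). A witness, by definition, has \emph{differing} order, so it cannot be any of these pairs; combined with the claim this forces $l_1\notin C$ and $l_2\notin C$.

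Finally I would push the witness through the deletion. Since $C$ is an interval and $l_1,l_2\notin C$, the intersection of $C$ with the open range $(l_1,l_2)$ is either empty or all of $C$; in both situations removing $C$ only discards between-elements of the pair and keeps $i_{l_1},i_{l_2}$ as well as $j_{l_1},j_{l_2}$ in place. Because exactly the positions $C$ are deleted from both sequences, the reindexing is identical for $\I$ and $\J$, and the images of $l_1,l_2$ form a checked pair of $\I\setminus(i_{l'_1},\dots,i_{l'_2})$ whose endpoint values are still $(i_{l_1},i_{l_2})$ while the counterpart in $\J\setminus(j_{l'_1},\dots,j_{l'_2})$ is still $(j_{l_1},j_{l_2})$ --- a pair of differing order. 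This is precisely a witness for $\I\setminus(i_{l'_1},\dots,i_{l'_2})\not\sim_m \J\setminus(j_{l'_1},\dots,j_{l'_2})$, completing the proof. The main obstacle is the structural claim of the second paragraph: one must check carefully that a checked pair cannot straddle the crest with an endpoint inside it, since everything else is routine reindexing.
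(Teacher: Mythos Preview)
Your argument is correct and proves exactly the implication stated: from a witness pair $(l_1,l_2)$ for $\I\not\sim_m\J$ you first rule out that either endpoint lies in the crest $C$ (any checked pair meeting $C$ is forced to be consecutive, and every consecutive pair touching $C$ has matching order in $\I$ and $\J$ because both sequences have a crest there), and then push the surviving witness through the deletion. The reindexing step is clean since $C$ is an interval and $l_1,l_2\notin C$, so the between-elements of the image pair in $\I\setminus C$ are a subset of the original between-elements and the endpoint values are unchanged.

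It is worth noting that your proof and the paper's proof establish \emph{opposite} implications. Despite the lemma's hypothesis $\I\not\sim_m\J$, the paper's argument takes an arbitrary checked pair of the \emph{truncated} sequence $\I'$ and, using ``Since $\I\sim_m\J$'' explicitly, shows it has the same order in $\J'$; that is, the paper actually proves $\I\sim_m\J\Rightarrow\I'\sim_m\J'$. Your proof gives $\I\not\sim_m\J\Rightarrow\I'\not\sim_m\J'$, equivalently $\I'\sim_m\J'\Rightarrow\I\sim_m\J$. These are not contrapositives of one another but two independent directions, and the application in Proposition~\ref{orthogonal} invokes the full equivalence $\J'\sim_m\I'\Leftrightarrow\J\sim_m\I$, so both are required. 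In effect the paper has a mismatch between statement and proof, and your argument supplies precisely the direction the lemma claims.
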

\begin{proof}
If $\I\setminus(i_{l'_1},...,i_{l'_2})$ is empty,  then $\J\setminus (j_{l'_1},...,j_{l'_2})$ must be empty because the lengths of $\I$, $\J$ are the same.
If $\I\setminus(i_{l'_1},...,i_{l'_2})$ is non empty, then $\I$ can be written as 
$$  (i_1,...,i_{l'_1},...,i_{l'_2},...,i_{L}) $$
and  
$$\I\setminus(i_{l'_1},...,i_{l'_2})=(i_1,...,i_{l'_1-1},i_{l'_2+1},...,i_{L})= (i'_1,...,i'_{l'_1-1},i'_{l'_1},...,i'_{L-l'_2+l'_1-1})$$ and 
$$\J\setminus(j_{l'_1},...,j_{l'_2})=(j_1,...,j_{l'_1-1},j_{l'_2+1},...,j_{L})= (j'_1,...,j'_{l'_1-1},j'_{l'_1},...,j'_{L-l'_2+l'_1-1})$$
For any indices $1\leq l_1<l_2<L-l'_2+l'_1-1$ such that $i_{l_3}>\max\{i'_{l_1},i'_{l_2}\}$:\\
If $ l_1,l_2\leq l'_1-1$ or $ l_1,l_2\geq l'_1$, then $(i'_{l_1},...,i'_{l_2})$ is an interval of $\I$.  Since $\I\sim_m\J$,  $(i'_{l_1},i'_{l_2})$ and $(j'_{l_1},j'_{l_2})$ have the same order .\\
If $ l_1< l'_1\leq l_2$, then $i'_{l_2}=i_{l_2+l'_2-l'_1+1}$.  We have 
$$i_{l_3}>i_{l'_1-1}\geq \max\{i'_{l_1},i'_{l_2} \}$$
for all $l'_1\leq l_3\leq l'_2$.  It follows that 
$$i_{l_3}>\max\{i_{l_1},i_{l_2} \}$$
for all $l_1<l_3<l_2+l'_2-l'_1+1$. It follows that $(i_{l_1},i_{l_2+l'_2-l'_1+1})$ and $(j_{l_1},j_{l_2+l'_2-l'_1+1})$ have the same order.  Thus, 
$(i'_{l_1},i'_{l_2})$ and $(j'_{l_1},j'_{l_2})$ have the same order. \\
The proof is complete.
\end{proof}

The same as the previous proof, by checking the definition of $\sim_m$, we have 
\begin{lemma}\label{increasing}
Let $\I=(i_1,....,i_L)\in \mathbb{Z}^L$ and $(i_{l'_1},...,i_{l'_2})$  is a crest of $\I$,  then  we have 
$$\I=(i_1,...i_L)\sim_m (i_1,...,i_{l'_1-1},i_{l'_1}+K,...,i_{l'_2}+K , i_{l'_2+1},...,i_l)$$
for any integer $K$ such that $i_{l'_1}+K>\max\{i_{l'_1-1},i_{l'_2+1}\}$.
\end{lemma}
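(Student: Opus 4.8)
The plan is to verify the defining relation of $\sim_m$ directly, by the same kind of case analysis used in the preceding lemmas. Write $\J=(j_1,\dots,j_L)$ for the sequence on the right-hand side, so that $j_l=i_l$ for $l\notin\{l'_1,\dots,l'_2\}$ and $j_l=i_l+K$ for $l'_1\le l\le l'_2$. Since $(i_{l'_1},\dots,i_{l'_2})$ is a crest, its entries share a common value $c:=i_{l'_1}=\cdots=i_{l'_2}$, and after the shift every modified entry equals $c+K$. The standing hypothesis on $K$ says exactly that $c+K>\max\{i_{l'_1-1},i_{l'_2+1}\}$, i.e. the shifted crest still strictly dominates its two immediate neighbours, whose values are unchanged. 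By the definition of $\sim_m$ it then suffices to fix a pair $1\le l_1<l_2\le L$ with $i_{l_3}>\max\{i_{l_1},i_{l_2}\}$ for all $l_1<l_3<l_2$, and to show that $(i_{l_1},i_{l_2})$ and $(j_{l_1},j_{l_2})$ have the same order.

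I would first dispose of the configurations in which the modification is invisible. If neither $l_1$ nor $l_2$ lies in $\{l'_1,\dots,l'_2\}$ then $j_{l_1}=i_{l_1}$ and $j_{l_2}=i_{l_2}$, so the orders agree trivially; this already covers the case in which the entire crest lies strictly between $l_1$ and $l_2$. If both endpoints lie in the crest then $i_{l_1}=i_{l_2}=c$ and $j_{l_1}=j_{l_2}=c+K$, so both pairs have equal entries and again agree.

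The only remaining configurations have exactly one endpoint inside the crest, and here the crest structure pins the pair down. Suppose $l_1\in\{l'_1,\dots,l'_2\}$ while $l_2\notin\{l'_1,\dots,l'_2\}$, so $l_2>l'_2$. Any $l_3$ with $l_1<l_3\le l'_2$ would have $i_{l_3}=c=i_{l_1}$, violating $i_{l_3}>i_{l_1}$; hence $l_1=l'_2$. Likewise, if $l_2>l'_2+1$ then the choice $l_3=l'_2+1$ gives $i_{l_3}=i_{l'_2+1}<c=i_{l_1}$, again violating the valley condition; hence $l_2=l'_2+1$. Thus $i_{l_1}=c>i_{l'_2+1}=i_{l_2}$, while $j_{l_1}=c+K>i_{l'_2+1}=j_{l_2}$ by the hypothesis on $K$, so both pairs are strictly decreasing. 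The mirror-image configuration $l_2\in\{l'_1,\dots,l'_2\}$, $l_1\notin\{l'_1,\dots,l'_2\}$, forces $l_2=l'_1$ and $l_1=l'_1-1$, giving $i_{l_1}=i_{l'_1-1}<c=i_{l_2}$ and $j_{l_1}=i_{l'_1-1}<c+K=j_{l_2}$, so both pairs are strictly increasing. In every case the orders coincide, which is precisely $\I\sim_m\J$.

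The step I expect to need the most care is this last one: the key point is that a valley pair can never have a single endpoint strictly interior to a constant crest, since the constancy of the crest contradicts the strict inequality in the valley condition. This forces the only genuinely mixed pairs to be the two boundary pairs $(l'_1-1,l'_1)$ and $(l'_2,l'_2+1)$, and for exactly these the order is preserved not automatically but thanks to the hypothesis $i_{l'_1}+K>\max\{i_{l'_1-1},i_{l'_2+1}\}$ — which is the reason that condition is imposed. The boundary conventions $i_0<i_1$ and $i_L>i_{L+1}$ make the argument uniform when the crest abuts an end of the sequence, since the corresponding boundary pair then simply fails to exist inside $\{1,\dots,L\}$.
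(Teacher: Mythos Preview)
Your proof is correct and is exactly the direct verification the paper has in mind: the paper itself gives no details here, merely remarking that one checks the definition of $\sim_m$ as in the preceding lemma. Your case analysis --- in particular the observation that a valley pair with one endpoint inside a constant crest is forced to be one of the two boundary pairs $(l'_1-1,l'_1)$ or $(l'_2,l'_2+1)$ --- is precisely the content that the paper leaves implicit.
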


The following proposition shows a deep relation between the set of standard generators of $\M(n,k)$ and $\sim_m$: 
\begin{proposition}\label{orthogonal}
Given two sequences $\I=\{i_1,...,i_L\}\in [k]^L,\J=\{j_1,...,j_L\}\in [n]^L$, let $\{\um_{i,j}\}_{i=1,...,n; j=1,..., k}$ be the set of standard generators of $\M(n,k)$,  then  we have 
$$\sum\limits_{(q_1,...,q_L)\sim_m \J}\um_{q_1,i_1}\cdots \um_{q_L,i_L}\p=\left\{\begin{array}{cc}
\p &  \text{ if}\,\,\,\, \J\sim_m\I\\
0 & \text{otherwise}
\end{array}\right.
$$
\end{proposition}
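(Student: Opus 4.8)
The plan is to prove the identity by induction on the length $L$, combining the crest-removal operations on index sequences (viewing $\I,\J$ as sequences in an ordered set so that $\sim_m$ makes sense across the ranges $[k]$ and $[n]$) with the structural relations of $M_i(n,k)$. First I would record the algebraic facts I need about the generators. Setting $P_j=\sum_{i=1}^n\um_{i,j}$, the monotone condition with $j'=j$, summed over $i'$, gives $P_j^2=P_j$, and since $P_j$ is visibly self-adjoint it is an orthogonal projection; summing the monotone condition over $i'$ for general $j'\le j$ gives the nesting $P_{j'}\le P_j$, with $\p=P_1$ the smallest. Because the projection $P_j$ is a sum of the projections $\{\um_{i,j}\}_i$, these must be mutually orthogonal, so $\um_{i,j}\um_{i',j}=0$ whenever $i\ne i'$. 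Combining this within-column orthogonality with the increasing condition and its adjoint yields the decisive fact: a product of two adjacent factors $\um_{q_t,i_t}\um_{q_{t+1},i_{t+1}}$ vanishes unless the pairs $(q_t,q_{t+1})$ and $(i_t,i_{t+1})$ have the same order.

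Next I would run the induction. The base case $L=1$ is immediate, since every one-term sequence is $\sim_m$-equivalent to every other and $\sum_{q}\um_{q,i_1}\p=P_{i_1}\p=\p$. For the inductive step, choose a crest $(i_{l'_1},\dots,i_{l'_2})$ of $\I$, of common (locally maximal) value $c$; such a crest exists by the lemma that every finite sequence has one. For a word $\um_{q_1,i_1}\cdots\um_{q_L,i_L}$ to be nonzero, the same-order rule forces the row indices over the crest to satisfy $q_{l'_1}=\cdots=q_{l'_2}=:q$ (equal columns) and $q>\max\{q_{l'_1-1},q_{l'_2+1}\}$ (smaller neighbouring columns), so every surviving $Q$ has a crest at the same positions. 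If $\J$ has no crest at $[l'_1,l'_2]$, then no $Q\sim_m\J$ has one either, so the whole sum vanishes; and by the lemma that $\sim_m$ preserves crests this is exactly the case $\I\not\sim_m\J$, as required.

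If instead $\J$ does have a crest there, I collapse it: grouping the surviving terms by $Q'=Q\setminus(\text{crest})$ and summing over the crest value $q$, the terms with $q\le\max\{q_{l'_1-1},q_{l'_2+1}\}$ already vanish, so the sum over $q$ may be extended to all of $[n]$, giving $\sum_q\um_{q,c}=P_c$. Since $c$ is maximal among the columns, $P_c$ is absorbed into the neighbouring factor (using $P_c\um_{i',j'}=\um_{i',j'}$ for $j'\le c$, and $P_c\p=\p$ at a boundary), reducing $\sum_{Q\sim_m\J}\um_{q_1,i_1}\cdots\um_{q_L,i_L}\p$ to the corresponding sum for the shortened sequences $\I'=\I\setminus(\text{crest})$ and $\J'=\J\setminus(\text{crest})$. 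Invoking Lemma \ref{increasing} to see that the crest value ranges freely above its neighbours, and the crest-removal lemmas to identify $\{Q\sim_m\J\}$ with $\{Q'\sim_m\J'\}$, completes the reduction; since crest removal preserves both $\sim_m$ and $\not\sim_m$, the equivalence $\I\sim_m\J\iff\I'\sim_m\J'$ lets the induction hypothesis finish the argument.

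The two steps I expect to need the most care are, first, deriving the within-column orthogonality $\um_{i,j}\um_{i',j}=0$ — this is not a stated axiom but follows from $P_j$ being a projection, and it is precisely what rules out the otherwise-troublesome terms where the columns are constant but the rows are not. Second, the bookkeeping that matches the summation index set $\{Q:Q\sim_m\J\}$ with $\{(Q',q):Q'\sim_m\J',\ q>\max(\text{neighbours})\}$ across the crest removal: one must check that inserting any admissible crest value into a class representative stays inside the class of $\J$, that distinct $Q$ give distinct $(Q',q)$, and one must separately treat crests sitting at the two ends of the sequence, where the freed projection $P_c$ is absorbed by $\p$ rather than by a neighbouring generator.
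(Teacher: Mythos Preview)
Your proposal is correct and follows essentially the same route as the paper's own proof: induction on $L$, picking a crest of $\I$, splitting into the case where $\J$ has no crest at those positions (forcing every term to vanish) versus the case where it does (collapsing the crest to a column sum $P_c$, absorbing it into the neighbour, and invoking the induction hypothesis on the shortened sequences via the crest-removal lemmas). Your explicit derivation of the within-column orthogonality $\um_{i,j}\um_{i',j}=0$ from $P_j$ being a projection, and your flagging of the boundary-crest absorption by $\p$, are points the paper uses tacitly without spelling out.
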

\begin{proof}
We will prove the proposition by induction. \\
When $L=1$,   the statement is apparently true.\\
Suppose the statement is true for all $L\leq L'$. Let us consider the case  $L=L'+1$. Let $(i_{l'_1},...,i_{l'_2})$ be a crest of $\I$:\\
{\bf Case 1:} If $(j_{l'_1},...,j_{l'_2})$ is not a crest of $\J$, then $\I\not\sim_m \J$  and  one of the following cases  happens:
\begin{itemize}
\item[1.] There exists an index $j_{l'_3}$  of $\J$ such that $j_{l'_3}\neq j_{l'_3+1}$ for some $ l'_1\leq l'_3<l'_2$. 
\item[2.]$j_{l'_1}\leq j_{l'_1-1}$. 
\item[3.] $j_{l'_2}\leq j_{l'_2+1}$. 
\end{itemize}

But, for all $\Q=(q_1,...,q_L)\sim_m\J$, we have:
\begin{itemize}
\item[1.]  $(q_{l'_3},q_{l'_3-1}) $ and $(j_{l'_3},j_{l'_3-1}) $ have the same order.
\item[2.]  $(q_{l'_1},q_{l'_1-1}) $ and $(j_{l'_1},j_{l'_1-1}) $ have the same order. 
\item[3.]  $(q_{l'_2},q_{l'_2+1}) $ and $(j_{l'_2},j_{l'_2+1}) $ have the same order. 
\end{itemize}
Therefore, we have:
\begin{itemize}
\item[1.]  $q_{l'_3}\neq q_{l'_3-1} $ and $i_{l'_3}=i_{l'_3-1} $ for some $ l'_1\leq l'_3<l'_2$.
\item[2.]  $q_{l'_1}\leq q_{l'_1-1} $ and $i_{l'_1}>i_{l'_1-1}$.
\item[3.]  $q_{l'_2}\leq q_{l'_2+1} $ and $i_{l'_2}>i_{l'_2+1} $. 
\end{itemize}
 According to the definition of $M_i(n,k)$,  we have one of the following equations:
 \begin{itemize}
\item[1.]  $\um_{q_{l'_3},i_{l'_3}} \um_{q_{l'_3+1},i_{l'_3+1}}=0$ for some $ l'_1\leq l'_3<l'_2$.
\item[2.]  $\um_{q_{l'_1-1},i_{l'_1-1}} \um_{q_{l'_1},i_{l'_1}}=0$.
\item[3.]  $\um_{q_{l'_2},i_{l'_2}} \um_{q_{l'_2+1},i_{l'_2+1}}=0$.
\end{itemize}
In this case,  we have  
$$
\sum\limits_{(q_1,...,q_L)\sim_m \J}\um_{q_1,i_1}\cdots \um_{q_L,i_L}\p=0.
$$
{\bf Case 2:} If $(j_{l'_1},...,j_{l'_2})$ is  a crest of $\J$, then $(q_{l'_1},...,q_{l'_2})$ is  a crest of $\Q$. Therefore,
$$\um_{q_{l'_1},i_{l'_1}}\cdots \um_{q_{l'_2},i_{l'_2}}=\um_{q_{l'_1},i_{l'_1}}.$$

By Lemma \ref{increasing}, if we fix the indices of $\Q\setminus (q_{l'_1},...,q_{l'_2})$, then $q_{l'_1},...,q_{l'_2}$  can be any integers such that $q_{l'_1}=...=q_{l'_2}$ and $\max\{q_{l'_1-1},q_{l'_2+1})\}<q_{l'_1}\leq n$.  Therefore, we have   
$$
\begin{array}{rcl}
&&\sum\limits_{\max\{q_{l'_1-1},q_{l'_2+1})\}<q_{l'_1}\leq n}\um_{q_{l'_1-1},i_{l'_1-1}}\um_{q_{l'_1},i_{l'_1}}\um_{q_{l'_2+1},i_{l'_2+1}}\\
&=&\sum\limits_{1\leq q_{l'_1}\leq n}\um_{q_{l'_1-1},i_{l'_1-1}}\um_{q_{l'_1},i_{l'_1}}\um_{q_{l'_2+1},i_{l'_2+1}}\\
&=&\um_{q_{l'_1-1},i_{l'_1-1}}\um_{q_{l'_2+1},i_{l'_2+1}}\\
\end{array}.
$$
The first equality holds because the extra terms are 0. The second equality uses the monotone universal condition of $M_i(n,k)$. Let $L''=L-l'_2+l'_1+1\leq L'$, then $\J\setminus (j_{l'_1},...,j_{l'_2})\in[n]^{L''}$
By Lemma \ref{remove }, $ \Q\setminus (q_{l'_1},...,q_{l'_2})\sim_m \J\setminus (j_{l'_1},...,j_{l'_2})$. If we denote by $(i'_1,...,i'_{L''})$ the sequence $\I\setminus (i_{l'_1},...,i_{l'_2})$, then we have  
\[
\begin{array}{rcl}
&&\sum\limits_{(q_1,...,q_L)\sim_m \J}\um_{q_1,i_1}\cdots \um_{q_L,i_L}\p\\
&=&\sum\limits_{(q'_1,...,q'_{L''})\sim_m \J\setminus (j_{l'_1},...,j_{l'_2})}\um_{q'_1,i'_1}\cdots \um_{q'_{L''},i'_{L''}}\p\\
&=&\left\{\begin{array}{cc}
\p &\text{ if}\,\,\,\, \J\setminus (j_{l'_1},...,j_{l'_2})\sim_m  \I\setminus (i_{l'_1},...,i_{l'_2})\\
0 & \text{otherwise}
\end{array}\right.
\end{array}
\]

The last equality comes from the assumption of our induction. By Lemma\ref{remove } and Lemma\ref{remove1 }, $\J\setminus (j_{l'_1},...,j_{l'_2})\sim_m  \I\setminus (i_{l'_1},...,i_{l'_2})$ iff $\J \sim_m \I$.\\
 The proof is complete.
\end{proof}

\subsection{Operator valued monotone sequences are monotonically spreadable}
In this subsection, we will show that operator valued monotone finite sequences of random variables are monotonically spreadable. To achieve it,  we need to consider the positions of the smallest elements of  indices sequences.

\begin{definition}\normalfont Let $\I=(i_1,...,i_L)$ be a sequence of ordered indices and $a=\min\{i_1,...,i_L\}$. We call the set $\S(\I)=\{l|i_l=a\}$ the positions of the smallest elements of $\I$. An interval of $(i_{l'_1},...,i_{l'_2})$ is called a hill of $\I$ if $ i_{l'_1-1}=i_{l'_2+1}=a$ and $i'_{l_3}\neq a$ for all $l'_1\leq l'_3\leq l'_2$, here we assume $i_0=i_{L+1}=a$ for convenience.
\end{definition}
{\bf Example:}  $(1,2,3,4,1,2,1)$ has two hills $(2,3,4)$ and $(2)$.  $(1,2,1,3,4,)$ has two hills $(2)$ and $(3,4)$. $(1,1,1,1,1)$ has no hill.
 
\begin{lemma}\label{minimal} Given two sequences $\I=\{i_1,...,i_L\}, \J=\{j_1,...,j_L\}\in [n]^L$ such that $\I\sim_m\J $, then $\S(\I)=\S(\J)$. Let $(i_{l'_1},...,i_{l'_2})$ be a hill of $\I$, then $$(i_{l'_1},...,i_{l'_2})\sim_m(j_{l'_1},...,j_{l'_2}).$$
\end{lemma}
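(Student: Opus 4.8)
The plan is to treat the two assertions separately, dealing with the statement about hills first since it is essentially immediate, and then establishing $\S(\I)=\S(\J)$ by a descending-chain argument. Throughout I will use two facts that are already available: that $\I\sim_m\J$ forces every pair of consecutive entries $(i_l,i_{l+1})$ and $(j_l,j_{l+1})$ to have the same order, and Lemma~\ref{connected}, which says that the ``everything strictly between is higher'' condition transfers from $\I$ to $\J$. Lemma~\ref{symmetry} lets me pass from $\I\sim_m\J$ to $\J\sim_m\I$ whenever convenient.

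For the hill statement, the key observation is that a hill $(i_{l'_1},\dots,i_{l'_2})$ is a \emph{contiguous} interval of positions. Hence, for any $l'_1\le p<q\le l'_2$, the positions lying strictly between $p$ and $q$ are exactly the same whether computed inside the hill or inside the full sequence. Consequently the defining condition of $\sim_m$ for the restricted sequences $(i_{l'_1},\dots,i_{l'_2})$ and $(j_{l'_1},\dots,j_{l'_2})$ is a \emph{subcollection} of the conditions that $\I\sim_m\J$ already guarantees, so each relevant pair $(i_p,i_q)$, $(j_p,j_q)$ has the same order and the restricted sequences are $\sim_m$-related with no further work.

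For $\S(\I)=\S(\J)$, write $a=\min\{i_1,\dots,i_L\}$ and let $\S(\I)=\{l_1<\cdots<l_r\}$. First I would show $j$ is constant on $\S(\I)$: for consecutive minima $l_s<l_{s+1}$ every position strictly between them has value $>a=\max\{i_{l_s},i_{l_{s+1}}\}$, so $\sim_m$ applies to the pair $(l_s,l_{s+1})$; since $i_{l_s}=i_{l_{s+1}}$, the pair $(j_{l_s},j_{l_{s+1}})$ also has equal order, giving a common value $c:=j_{l_1}=\cdots=j_{l_r}$. The substantive step, and the one I expect to be the main obstacle, is to show $j_m>c$ for every $m\notin\S(\I)$. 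Here I would build a chain $m=q_0,q_1,\dots,q_t$ with $i_{q_0}>i_{q_1}>\cdots>i_{q_t}=a$ along which $j$ strictly decreases. Given $q_s$ with $v:=i_{q_s}>a$, some value $<v$ (for instance $a$) is attained on at least one side of $q_s$; looking toward that side, let $r$ be the first position with $i_r<v$. Every position between $q_s$ and $r$ then has value $\ge v$, and the positions of value exactly $v$ in this stretch are linked to one another and to $q_s$ through intervening strictly higher values, so they all share $q_s$'s $j$-value; the last such position $r'$ satisfies the between-condition with $r$, whence $j_{q_s}=j_{r'}>j_r$. Setting $q_{s+1}=r$ continues the chain, and since the integer values strictly decrease and are bounded below by $a$ it terminates at a global minimum, giving $j_m=j_{q_0}>j_{q_t}=c$.

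The delicate point throughout the chain construction is the presence of \emph{flats} (repeated equal values), which can make the naive pair $(q_s,r)$ fail the between-condition; the device of routing through the equal-valued ``visible'' positions, each consecutive pair of which does satisfy the condition, is what resolves it. Once $j_m>c$ is known for all $m\notin\S(\I)$, we obtain $\min\J=c$ and $\{l:j_l=c\}=\S(\I)$, that is $\S(\J)=\S(\I)$; no appeal to symmetry is even needed, although $\J\sim_m\I$ provides a consistency check. This completes both assertions.
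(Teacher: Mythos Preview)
Your argument is correct, but it takes a genuinely different route from the paper for the claim $j_m>c$ at positions $m\notin\S(\I)$.

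The paper splits into two cases. For $m$ lying strictly between two consecutive minimum positions $l''_p<l''_{p+1}$ of $\I$, it simply invokes Lemma~\ref{connected}: since every position between $l''_p$ and $l''_{p+1}$ has $i$-value strictly above $a=\max\{i_{l''_p},i_{l''_{p+1}}\}$, the same holds for $\J$, giving $j_m>\max\{j_{l''_p},j_{l''_{p+1}}\}=c$ in one line. For $m<l''_1$ (and symmetrically $m>l''_{k'}$), the paper argues by contradiction using the reverse relation $\J\sim_m\I$: if some $j_l\le j_{l''_1}$ with $l<l''_1$, take the largest such $l$; then everything between $l$ and $l''_1$ has $j$-value above both, so $\sim_m$ forces $i_l\le i_{l''_1}=a$, contradicting $l\notin\S(\I)$.

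Your descending-chain construction handles all $m\notin\S(\I)$ uniformly and never appeals to symmetry or to Lemma~\ref{connected}; it works directly from the definition of $\sim_m$. The trade-off is length and the need to deal carefully with flats, which you do correctly by routing through the equal-valued ``visible'' positions. The paper's approach is shorter because the transfer Lemma~\ref{connected} and symmetry have already absorbed the work; your approach is more self-contained. Both give the hill statement the same way, by noting that a contiguous interval inherits the $\sim_m$ condition from the full sequence.
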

\begin{proof}
Let us check the values of $\J$ one by one. Suppose $$\S(\I)=\{l''_1<\cdots<l''_{k'}\},$$ where $k'$ is the number of elements of $\S(\I)$. Let $b=\min\{j_1,...j_L\}$,  we want to show that $j_{l''_1}=\cdots=j_{l''_{k'}}=b$ and $j_{l}>b$ for all $l\not\in\S(\I)$.\\
Given an integer  $1\leq p<k'$, we have   $$i_l>a=i_{l''_p}=i_{l''_{p+1}}$$ for all $l''_p<l<l''_{p+1}$.
According to  the definition of $\sim_m$ and Lemma\ref{connected},  we have  $$j_{l''_p}=j_{l''_{p+1}}$$  and $$j_l> \max\{j_{l''_p},j_{l''_{p+1}}\}$$ for all $l''_p<l<l"_{p+1}$. The left is to check the elements $j_l$ with $l<l''_1$ or $l>l''_{k'}. $ If there exists and $l<l''_1$ such that $j_l\leq j_{l''_1}$, we chose the greatest such $l$. Then,  we have $$j_{l'}>\max\{j_l,j_{l''_1}\}$$ for all $l<l'<l''_1$. Therefore, we have  $$i_l\leq i_{l''_1}$$ which is a contradiction.  It implies that 
$$j_{l}>j_{l''_1}$$ for all $l<l''_1$.
the same we have 
$$j_{l}>j_{l''_1}$$ for all $l>l''_k$.
Therefore, $j_{l''_1}=\cdots=j_{l''_{k'}}=\min\{j_1,...,j_L\}$. The last statement is obvious from the definition of $\sim_m$. The proof is complete.
\end{proof}

Given $\I=\{i_1,...,i_L\}\in\mathbb{Z}^L$, we will denote by $x_{\I}=x_{i_1}x_{i_2}\cdots x_{j_L}$ for short .  Then, we have 

\begin{proposition}
Let $(\A,\B, E)$ be an operator valued probability space, and $(x_i)_{i=1,...,n}$ be a sequence of random variables in $\A$. If $(x_i)_{i=1,...,n}$ are identically distributed and monotonically independent. Then,  for indices sequences $\I=\{i_1,...,i_L\},\J=\{j_1,...,j_L\}\in [n]^L$ such that $\I\sim_m\J$, $L\in\mathbb{N}$,  we have 
$$E[x_{\I}]=E[x_{\J}].$$
\end{proposition}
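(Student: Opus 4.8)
The plan is to prove the identity by induction on the length $L$, reducing moments of identically distributed monotonically independent variables by repeatedly extracting crests. The one preliminary point is that the operator-valued monotone independence relation requires the index word to have distinct consecutive entries, whereas $\I$ may repeat an index several times in a row. So first I would pass to reduced form: since $\I\sim_m\J$ forces $(i_l,i_{l+1})$ and $(j_l,j_{l+1})$ to have the same order for every $l$, one has $i_l=i_{l+1}$ exactly when $j_l=j_{l+1}$, so the two words collapse their repeated blocks in the same positions and with the same multiplicities. Writing $x_\I=p_1(x_{a_1})\cdots p_r(x_{a_r})$ with $a_1\neq a_2\neq\cdots\neq a_r$ and $p_t(X)=X^{m_t}\in\B\langle X\rangle_0$, and checking (by choosing the last, resp. first, representative of each block) that the reduced sequences are again $\sim_m$-equivalent, I reduce to the following generalized claim, proved by induction on $r$:

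\emph{For reduced sequences $\I,\J\in[n]^r$ (consecutive entries distinct) with $\I\sim_m\J$, and for any $p_1,\dots,p_r\in\B\langle X\rangle_0$, one has $E[p_1(x_{i_1})\cdots p_r(x_{i_r})]=E[p_1(x_{j_1})\cdots p_r(x_{j_r})]$.}

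The generalized form (allowing arbitrary $\B\langle X\rangle_0$-weights, not just powers) is essential, because extracting a crest leaves behind a $\B$-coefficient that must be absorbed into a neighbouring polynomial.

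For the inductive step I would select a crest of $\I$, which exists by the lemma guaranteeing every sequence has one; in reduced form this is a single peak position $t$ with $a_{t-1}<a_t>a_{t+1}$. By Lemma~\ref{remove } the same position $t$ is a peak of $\J$. Applying the monotone independence relation at $t$ replaces $p_t(x_{a_t})$ by $E[p_t(x_{a_t})]\in\B$, and likewise $p_t(x_{b_t})$ by $E[p_t(x_{b_t})]$; since the variables are identically distributed these two $\B$-elements coincide, so call the common value $c\in\B$. Thus both moments reduce to words over the shorter sequences $\I'=\I\setminus(a_t)$, $\J'=\J\setminus(b_t)$ with the factor $c$ inserted between the images of positions $t-1$ and $t+1$. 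I would then absorb $c$: if $t$ is interior and $a_{t-1}\neq a_{t+1}$, fold $c$ into the left polynomial ($p_{t-1}\mapsto p_{t-1}(X)\,c$); if $a_{t-1}=a_{t+1}$ the two neighbours become adjacent equal indices and I merge them into $p_{t-1}(X)\,c\,p_{t+1}(X)\in\B\langle X\rangle_0$, where crucially $\sim_m$ applied to the pair $(t-1,t+1)$, whose sole interior entry $t$ exceeds both endpoints, forces $b_{t-1}=b_{t+1}$ as well, so the identical merge occurs on the $\J$ side; if $t$ is a boundary peak I instead pull $c$ out of $E$ by the bimodule property. In every case $\I'$ and $\J'$ remain reduced, the modified polynomials are identical for $\I$ and $\J$, and by Lemma~\ref{remove } together with Lemma~\ref{remove1 } they satisfy $\I'\sim_m\J'$, so the induction hypothesis closes the step.

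The main obstacle is precisely this bookkeeping of the $\B$-valued coefficient $c$ produced at each peak: I must verify that folding or merging it into a neighbour keeps the remaining data in the exact form demanded by the induction hypothesis, and that the \emph{same} modification is forced on both sides. The latter is where identical distribution (yielding a common $c$) and the order-preservation of $\sim_m$ on the relevant pair (yielding matching adjacencies) are used together, while the purely combinatorial fact that crest removal preserves $\sim_m$-equivalence is supplied by Lemma~\ref{remove } and Lemma~\ref{remove1 }.
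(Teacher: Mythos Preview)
Your proof is correct but proceeds \emph{dually} to the paper's. The paper argues bottom-up: it locates the minimal index $a=\min\I$ and uses Lemma~\ref{minimal} to show that the positions where the minimum is attained, and the resulting hill decomposition, coincide for $\I$ and $\J$. Writing $x_\I=x_a^{n_1}x_{\I_1}x_a^{n_2}\cdots x_{\I_l}x_a^{n_{l+1}}$, monotone independence gives $E[x_\I]=E[x_a^{n_1}E[x_{\I_1}]\cdots E[x_{\I_l}]x_a^{n_{l+1}}]$ in one stroke; the induction hypothesis applies to each hill, and the outer expression is a $\B$-polynomial in the single variable $x_a$, so identical distribution finishes the step directly. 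You instead work top-down, peeling off one crest at a time and invoking Lemmas~\ref{remove } and~\ref{remove1 } rather than Lemma~\ref{minimal}.

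The trade-off is this: the paper's minimal-index decomposition avoids your preliminary reduction to alternating words and the whole bookkeeping of absorbing the extracted coefficient $c$ into neighbouring polynomials, because after pulling out all hills what remains is already a $\B$-polynomial in one variable. Your route is more local and requires strengthening the induction hypothesis to arbitrary $p_t\in\B\langle X\rangle_0$; the case split on whether $a_{t-1}=a_{t+1}$ (forcing a merge and a further reduction) is exactly the price paid for extracting a single peak rather than an entire level. Both arguments are sound, and yours has the minor advantage of using only the crest lemmas already established for Proposition~\ref{orthogonal}, whereas the paper introduces the separate hill machinery of Lemma~\ref{minimal}.
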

\begin{proof}
When $L=1$, the statement is true since the sequence is identically distributed.\\
Suppose the statement is true for all $L\leq L'\geq 1$.
Let us consider the case  $L=L'+1$. If $\I$ has no hill, then $i_1=\cdots=i_L$ which implies $j_1=\cdots=j_L$. 
The statement is true for this case, because the sequence is identically distributed. 
Suppose $\I$ has hills $\I_1,...,\I_l$ and $a=\min\{i_1,...,i_L\}$. 
Then, $x_{\I}$ can be written as 
$$x_a^{n_1}x_{\I_1}x_a^{n_2}x_{\I_2}\cdots x_a^{n_l}x_{\I_l}x_a^{n_{l+1}},$$
where $n_2,...,n_l\in\mathbb{Z}^+$ and $n_1,n_{l+1}\in\mathbb{Z}\cup\{0\}$. 
Since $(x_i)_{i=1,...,n}$ are monotonically independent, we have
$$E[x_\I]=E[x_a^{n_1}E[x_{\I_1}]x_a^{n_2}E[x_{\I_2}]\cdots x_a^{n_l}E[x_{\I_l}]x_a^{n_{l+1}}].$$
Let $b=\min\{j_1,...,j_L\}$, by Lemma \ref{minimal}, $\J$ has hills $\J_1,...,\J_l$ whose positions of elements correspond to the positions of elements of $\I_1,...,\I_l$ and $\J_{l'}\sim_m\J_{l'}$ for all $1\leq l'\leq k'$. Therefore, we have 
$$\begin{array}{rcl}
E[x_\J]&=&E[x_b^{n_1}E[x_{\J_1}]x_b^{n_2}E[x_{\J_2}]\cdots x_b^{n_l}E[x_{\J_l}]x_b^{n_{l+1}}]\\
&=&E[x_b^{n_1}E[x_{\I_1}]x_b^{n_2}E[x_{\I_2}]\cdots x_b^{n_l}E[x_{\I_l}]x_b^{n_{l+1}}]\\
&=&E[x_a^{n_1}E[x_{\I_1}]x_a^{n_2}E[x_{\I_2}]\cdots x_a^{n_l}E[x_{\I_l}]x_a^{n_{l+1}}]\\
&=&E[x_\I],
\end{array}
$$
where the second equality follows the induction and the third equality holds because $x_a$ and $x_b$ are identically distributed. The proof is complete.
\end{proof}

\begin{proposition}\label{mi is ms}
Let $(\A,\B, E)$ be an operator valued probability space, and $(x_i)_{i=1,...,n}$ be a sequence of random variables in $\A$. If $(x_i)_{i=1,...,n}$ are identically distributed and monotonically independent with respect to $E$.  Let $\phi$ be a state on $\A$ such that $\phi(\cdot)=\phi(E[\cdot])$. Then, $(x_i)_{i=1,...,n}$ is monotonically spreadable with respect to $\phi$.
\end{proposition}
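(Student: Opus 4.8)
The plan is to verify the $M_i(n,k)$-invariance condition directly for every $k<n$, reducing it to the combinatorial identity of Proposition \ref{orthogonal} together with the $\sim_m$-invariance of $E$-moments just established. By linearity of the defining equation it suffices to test it on an arbitrary monomial $p=X_{j_1}\cdots X_{j_m}\in\C\langle X_1,\dots,X_k\rangle$; writing $\I=(j_1,\dots,j_m)\in[k]^m$, the left-hand side is $\mu_{x_1,\dots,x_k}(p)\p=\phi(x_{\I})\p$.

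First I would expand the coaction. Since $\am_{n,k}$ is multiplicative with $\am_{n,k}(X_j)=\sum_{i=1}^n X_i\otimes\um_{i,j}$, applying $\mu_{x_1,\dots,x_n}\otimes\mathrm{id}$ yields
\[
\p\,\big(\mu_{x_1,\dots,x_n}\otimes\mathrm{id}\big)\!\big(\am_{n,k}(p)\big)\,\p
=\p\sum_{\Q\in[n]^m}\phi(x_{\Q})\,\um_{q_1,j_1}\cdots\um_{q_m,j_m}\,\p,
\]
where $\Q=(q_1,\dots,q_m)$ ranges over all length-$m$ sequences of first indices. The structural point is that both factors of each summand are organized by $\sim_m$: the scalar $\phi(x_\Q)=\phi(E[x_\Q])$ depends only on the $\sim_m$-class of $\Q$ by the preceding proposition (here one uses $\phi=\phi\circ E$), while the operator part is controlled by Proposition \ref{orthogonal}.

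Next I would partition the sum over $\Q$ into $\sim_m$-equivalence classes. Pulling the now class-constant scalar out of each class and invoking Proposition \ref{orthogonal}, the trailing $\p$ makes the inner operator sum $\sum_{\Q\sim_m\J}\um_{q_1,j_1}\cdots\um_{q_m,j_m}\,\p$ equal to $\p$ when $\J\sim_m\I$ and $0$ otherwise. Hence only the class of $\I$ survives, and since $\phi(x_\Q)=\phi(x_\I)$ on that class we are left with $\p\,\phi(x_\I)\,\p=\phi(x_\I)\p$, which matches the left-hand side. As $p$, hence $\I$, is arbitrary and $k<n$ was arbitrary, monotone spreadability follows.

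The routine points are the bookkeeping of which index of $\um_{q_l,j_l}$ is the summation variable versus the fixed monomial label, and that scalars commute past $\p$ with $\p^2=\p$. The one place demanding care --- and the conceptual heart of the argument --- is the alignment of the two independent uses of $\sim_m$: that the \emph{same} equivalence relation simultaneously governs the moments $\phi(x_\Q)$ (through monotone independence and identical distribution) and the products of generators of $M_i(n,k)$ (through Proposition \ref{orthogonal}). This matching is precisely what makes monotone independence the distributional symmetry detected by the spaces $M_i(n,k)$, so the substance of the proof already resides in Proposition \ref{orthogonal}; what remains is to assemble the pieces.
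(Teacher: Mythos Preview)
Your argument is correct and is essentially identical to the paper's own proof: expand the coaction, partition the sum over $[n]^m$ into $\sim_m$-classes, use the preceding proposition (together with $\phi=\phi\circ E$) to make the scalar constant on each class, and then invoke Proposition~\ref{orthogonal} so that only the class of the original index sequence survives. The paper carries out exactly this computation, with the same bookkeeping.
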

\begin{proof}
For fixed natural numbers $n,k\in\mathbb{N}$, let $(u_{i,j})_{i=1,...,n; j=1,...,k}$ be standard generators of $M_i(n,k)$. Let $\J=(j_1,...,j_L)\in [k]^L$ and denote $x_{j_1}\cdots x_{j_L}$ by $x_{\J}$.   We denote the equivalent class of $[n]^L$ associated with $\sim_m$ by $\overline{[n^L]}$. For each $\I\in [n]^L$, we denote $u_{i_1,j_1}\cdots u_{i_L,j_L}$ by $u_{\I,\J}$. Then, by proposition \ref{orthogonal}, we have 
$$\begin{array}{rcl}
&&\sum\limits_{\I\in [n]^L}\phi(x_{\I}) \p u_{\I,\J} \p\\
&=&\sum\limits_{\I\in [n]^L}\phi(E[x_{\I}])\p u_{\I,\J}\p\\
&=&\sum\limits_{\bar Q\in \overline{[n]^L}}\sum\limits_{\I\in \bar Q}\phi(E[x_{\I}]) \p u_{\I,\J}\p\\
&=&\sum\limits_{\J\not\in\bar Q\in \overline{[n]^L}}\sum\limits_{\I\in \bar Q}\phi(E[x_{\I}]) \p u_{\I,\J}\p+\sum\limits_{\J\in\bar Q\in \overline{[n]^L}}\sum\limits_{\I\in \bar Q}\phi(E[x_{\I}]) \p u_{\I,\J}\p\\
&=&\sum\limits_{\J\not\in\bar Q\in \overline{[n]^L}}\sum\limits_{\I\in \bar Q}\phi(E[x_{\Q}]) \p u_{\I,\J}\p+\sum\limits_{\I\sim_m\J}\phi(E[x_{\J}]) \p u_{\I,\J}\p\\
&=&0+\phi(E[x_{\J}])\p\\
&=&\phi(x_{\J})\p\\
\end{array}
$$
Since $n,k$ are arbitrary, the proof is complete.

\end{proof}

\section{Tail algebras}
In the previous work on distributional symmetries, infinite sequences of objects are indexed by natural numbers.  
For this kind of  infinite sequences of random variables,  the conditional expectations in de Finetti type theorems are defined via the limit of  unilateral shifts. 
It is shown in \cite{Ko}  that unilateral shift is an isometry  frome $\A$ to itself if  $(\A,\phi)$ is a $W^*$-probability space generated by a spreadable sequence of random variables and  $\phi$ is faithful. 
Therefore,   WOT continuous conditional expectations defined via the limit of unilateral shift exist in a very weak situation, i.e. the sequence of random variables just need to be spreadable.  However,  our works are in a more general situation that the state $\phi$ is not necessarily faithful. 
 In our framework, we will provide an example in which the sequence of random variables  is  monotone spreadable but the unilateral shift is not an isometry. Therefore,  we can not get an extended de Finetti type theorem for monotone independence in the usual way. The key change in this paper is that we will  consider bilateral sequences of random variables.   We begin with an interesting example :
\subsection{Unbounded spreadable sequences}  Unlike the situation in probability spaces with faithful states, an infinite spreadable sequence of random variables indexed by natural numbers needs not to be bounded. Even more, there exists  an infinite monotonically spreadable unbounded sequence of bounded random variables in a non-degenerated $W^*$-probability space.\\
\noindent{\bf Example:}
Let  $\hh$ be the standard 2-dimensional Hilbert space with orthonormal basis $$\{v=\left(
\begin{array}{c}
1\\
0\\
\end{array}
\right),
w=\left(
\begin{array}{c}
0\\
1\\
\end{array}
\right)
\}.$$ Let $ p, A, x \in B(\hh)$  be operators on $\hh$ with the following matrix forms:
$$p=\left(
\begin{array}{cc}
1&0\\
0&0\\
\end{array}
\right),\,\,\,\,\,A=\left(
\begin{array}{cc}
1&0\\
0&2\\
\end{array}
\right),\,\,\,\,\,x=\left(
\begin{array}{cc}
0&1\\
1&0\\
\end{array}
\right).
$$

Let $\mathscr{H}=\bigotimes\limits_{n=1}^{\infty}\hh$ the infinite tensor product of $\hh$.  Let $\{x_i\}_{i=1}^\infty$ be a sequence of selfadjoint operators in $B(\mathscr{H})$ defined as follows:
$$x_i=\bigotimes\limits_{n=1}^{i-1}A\otimes x\otimes \bigotimes\limits_{m=1}^{\infty}p$$

Let $\phi$ be the vector state $\langle \cdot v,v\rangle$ on $\hh$ and $\Phi=\bigotimes\limits_{n=1}^{\infty}\phi$ be a state on $B(\mathscr{H})$. It is obvious that   $\Phi(x_i^n)=\phi(x^n)$ for for $i$. Therefore, the sequence $(x_i)_{i\in\mathbb{N}}$ is identically distributed.  For any $x,y \in B(\hh)$, an elementary computation shows $$\phi(xpy)=\phi(x)\phi(y).$$
For convenience, we  will denote  $A^{\otimes i-1}=\bigotimes\limits_{n=1}^{i-1}A$ and $P^{\otimes \infty}=\bigotimes\limits_{n=1}^{\infty}P$. Also, we  denote $x_{i_1}\cdots x_{i_L}=x_{\I}$ for $\I=(i_1,...,i_L)\in\mathbb{N}^L$ . We will show that the sequence $\{x_i\}_{i\in\mathbb{N}}$ is $M_i(n, k)$-spreadable with respect to $\Phi$.
\begin{lemma}
For indices sequences $\I=(i_1,...,i_L),\J=(j_1,...,j_L)\in [n]^L$ such that $\I\sim_m\J$ and $L\in\mathbb{Z}^+$,  we have 
$$\Phi(x_{\I})=\Phi(x_{\J})$$
\end{lemma}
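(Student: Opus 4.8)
The plan is to compute $\Phi(x_\I)$ explicitly by exploiting that $\Phi=\bigotimes_n\phi$ is a product state while each $x_i$ is an elementary tensor, and then to deduce the invariance by induction on the length $L$, using the decomposition of $\I$ into hills furnished by Lemma \ref{minimal}. I emphasize at the outset that this sequence is \emph{not} monotonically independent with respect to the scalar state $\Phi$ — for instance $\Phi(x_1x_2^2x_1)=4\neq 1=\Phi(x_2^2)\Phi(x_1^2)$ — so the invariance cannot be read off from the operator-valued monotone moment proposition and genuinely requires the tensor computation below.

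First I would record the factorisation. Since $x_i$ acts as $A$ on the legs $n<i$, as $x$ on the leg $n=i$, and as $p$ on the legs $n>i$, the product $x_\I=x_{i_1}\cdots x_{i_L}$ is again an elementary tensor whose $m$-th leg is the word $W_m(\I)=T_m(i_1)\cdots T_m(i_L)$, where $T_m(i)$ equals $A$, $x$, or $p$ according as $i>m$, $i=m$, or $i<m$. Because $\Phi$ is the product state, $\Phi(x_\I)=\prod_{m\ge 1}\phi(W_m(\I))$, and only finitely many factors differ from $1$: for $m>\max\I$ the leg is $p$ with $\phi(p)=1$, and for $m<\min\I$ the leg is $A^L$ with $\phi(A^L)=1$. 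I would also use, besides the identity $\phi(\alpha p\beta)=\phi(\alpha)\phi(\beta)$ from the text, its degenerate forms $\phi(p\beta)=\phi(\beta)$ and $\phi(\beta p)=\phi(\beta)$, both immediate from $pv=v$.

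Next I would set up a recursion on hills. Let $a=\min\I$, let $\S(\I)$ be the positions of the minimal entries, and let $H_0,\dots,H_r$ be the (possibly empty) hills of $\I$, i.e. the blocks of positions lying between, before, and after the minimal positions. For $m=a$ the leg $W_a(\I)$ is a word in $A$ and $x$ alone, carrying $x$ exactly on $\S(\I)$ and $A$ elsewhere, and it is never cut. For every $m>a$, each minimal position contributes a factor $p$ to $W_m(\I)$; cutting at each such $p$ via $\phi(\alpha p\beta)=\phi(\alpha)\phi(\beta)$ factorises $\phi(W_m(\I))$ into the product over the hills of the $\phi$ of the corresponding sub-words, which are precisely the legs $W_m(H_t)$ of the shorter sequences $x_{H_t}$. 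Since the entries of each hill exceed $a$, the legs $W_m(H_t)$ with $m\le a$ are powers of $A$ and contribute $1$, whence $\prod_{m>a}\phi(W_m(H_t))=\Phi(x_{H_t})$. Collecting all legs gives
\[
\Phi(x_\I)=\phi\bigl(W_a(\I)\bigr)\,\prod_{t=0}^{r}\Phi(x_{H_t}),
\]
with the convention $\Phi(x_\emptyset)=1$, each hill being strictly shorter than $\I$. The induction then closes easily: for $L=1$ the assertion is the identical distribution already noted, and for the inductive step, if $\I\sim_m\J$ then Lemma \ref{minimal} gives $\S(\I)=\S(\J)$, so the two sequences share the same hill pattern and satisfy $H_t(\I)\sim_m H_t(\J)$ for each $t$; the word $W_a(\I)$ depends only on which positions are minimal, so $W_a(\I)$ and $W_b(\J)$ (with $b=\min\J$) are the same word in $A$ and $x$ and have equal $\phi$, while $\Phi(x_{H_t(\I)})=\Phi(x_{H_t(\J)})$ by induction; substitution yields $\Phi(x_\I)=\Phi(x_\J)$.

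The step I expect to demand the most care is the factorisation of $\phi(W_m(\I))$ over the hills for $m>a$: one must verify that cutting at every minimal-position $p$ is legitimate even when an interior hill or one of the boundary blocks $H_0,H_r$ is empty — this is where the degenerate identities $\phi(p\beta)=\phi(\beta)$ and $\phi(\beta p)=\phi(\beta)$ are needed — and that the surviving factors really are the legs of the sub-sequences $x_{H_t}$, so that the recursion matches the induction hypothesis. Once that bookkeeping is done, the $\sim_m$-invariance of both $\phi(W_a(\I))$ (through $\S(\I)=\S(\J)$) and of the hill contributions (through $H_t(\I)\sim_m H_t(\J)$) delivers the result.
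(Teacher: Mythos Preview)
Your proof is correct and follows essentially the same route as the paper's: both argue by induction on $L$, factor $\Phi(x_\I)$ over the tensor legs, cut the legs $m>a$ at the minimal positions via $\phi(\alpha p\beta)=\phi(\alpha)\phi(\beta)$ to isolate the hill contributions $\Phi(x_{H_t})$, and then invoke Lemma~\ref{minimal} to conclude that both the uncut leg $W_a(\I)$ and the hill factors are $\sim_m$-invariant. Your explicit treatment of the degenerate identities $\phi(p\beta)=\phi(\beta)$, $\phi(\beta p)=\phi(\beta)$ and the remark that $(x_i)$ is \emph{not} monotonically independent with respect to the scalar state $\Phi$ are nice clarifications, but the argument is the paper's own.
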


\begin{proof}
When $L=1$, the statement is true since the sequence is identically distributed.\\
Suppose the statement is true for all $L\leq L'$. Let us consider the case  $L=L'+1$. If $\I$ has no hill, then $i_1=\cdots=i_L$ which implies $j_1=\cdots=j_L$. The statement is true for this case, because the sequence is identically distributed.  Also, we denote by $x_i^{(n)}$ the $n$-the component of $x_i$. Then,  
$$
x_i^{(n)}=\left\{
\begin{array}{cc}
a&\text{if}\,\,\, n<i\\
 x&\text{if}\,\,\, n=i\\
p&\text{if}\,\,\, n>i\\
\end{array}
\right.
$$
and $x^{(n)}_{\I}=x^{(n)}_{i_1}x^{(n)}_{i_2}\cdots x^{(n)}_{i_L}$.\\
 According to the definition of $\Phi$, we have that 
 $$ \Phi(x_{i_1}x_{i_2}\cdots x_{j_L})=\prod\limits_{n=1}^{\infty}\phi(\prod\limits_{l=1}^Lx_i^{(n)}).$$
 Notice that all the terms $\phi(\prod\limits_{l=1}^Lx_i^{(n)})$ are $1$ except finite terms.
Suppose $\I$ has hills $\I_1,...,\I_l$ and $a=\min\{i_1,...,i_L\}$, then $x_{\I}$ can be written as 
$$x_a^{n_1}x_{\I_1}x_a^{n_2}x_{\I_2}\cdots x_a^{n_l}x_{\I_l}x_a^{n_{l+1}}.$$
Therefore,
$$
\phi(\prod\limits_{l=1}^Lx_i^{(n)})=\left\{
\begin{array}{cc}
1&\text{if}\,\,\, n<a\\
\phi(x^{n_1}A^{|\I_1|}x^{n_2}A^{|\I_2|}\cdots x^{n_l}A^{|\I_l|}x^{n_{l+1}})&\text{if}\,\,\, n=a\\
\phi(px^{(n)}_{\I_1}px^{(n)}_{\I_2}p\cdots px^{(n)}_{\I_l}p)&\text{if}\,\,\, n>a\\
\end{array}
\right.
$$
It follows that
$$\phi(\prod\limits_{l=1}^Lx_i^{(n)})=\prod\limits_{n\geq \min\{\I\}}^{\infty}\phi(\prod\limits_{l=1}^Lx_i^{(n)}).
$$
Because 
$$\phi(px^{(n)}_{\I_1}px^{(n)}_{\I_2}p\cdots px^{(n)}_{\I_l}p)=\phi(x^{(n)}_{\I_1})\phi(x^{(n)}_{\I_2})\cdots \phi(x^{(n)}_{\I_l}),$$
we have 
$$\begin{array}{rcl}
&&\Phi(x_{i_1}x_{i_2}\cdots x_{j_L})\\
&=& \phi(x^{n_1}A^{|\I_1|}x^{n_2}A^{|\I_2|}\cdots x^{n_l}A^{|\I_l|}x^{n_{l+1}}) \prod\limits_{n>a}^{\infty}\phi(px^{(n)}_{\I_1}px^{(n)}_{\I_2}p\cdots px^{(n)}_{\I_l}p).\\
&=&\phi(x^{n_1}A^{|\I_1|}x^{n_2}A^{|\I_2|}\cdots x^{n_l}A^{|\I_l|}x^{n_{l+1}}) \prod\limits_{n>a}^{\infty}\phi(x^{(n)}_{\I_1})\phi(x^{(n)}_{\I_2})\cdots \phi(x^{(n)}_{\I_l})\\
&=&\phi(x^{n_1}A^{|\I_1|}x^{n_2}A^{|\I_2|}\cdots x^{n_l}A^{|\I_l|}x^{n_{l+1}}) \Phi(x_{\I_1})\Phi(x_{\I_2})\cdots \Phi(x_{\I_l})\\
\end{array}
$$
Let $b=\min\{j_1,...,j_L\}$, by Lemma\ref{minimal}, $\J$ has hills $\J_1,...,\J_l$ whose positions of elements correspond to the positions of elements of $\I_1,...,\I_l$ and $\J_{l'}\sim_m\J_{l'}$ for all $1\leq l'\leq k'$. 
Therefore, we have 
$$\begin{array}{rcl}
\Phi(x_{\J})&=&\Phi(x_{i_1}x_{i_2}\cdots x_{i_L})\\
&=&\phi(x^{n_1}A^{|\J_1|}x^{n_2}A^{|\J_2|}\cdots x^{n_l}A^{|\J_l|}x^{n_{l+1}}) \Phi(x_{\J_1})\Phi(x_{\J_2})\cdots \Phi(x_{\J_l})\\
&=&\phi(x^{n_1}A^{|\I_1|}x^{n_2}A^{|\I_2|}\cdots x^{n_l}A^{|\I_l|}x^{n_{l+1}}) \Phi(x_{\I_1})\Phi(x_{\I_2})\cdots \Phi(x_{\I_l})\\
&=&\Phi(x_{\I})
\end{array}
$$
where the second equality follows the induction and the true that $\J_k\sim_m\I_k$ and $|\J_k|=|\I_k|$ for all $1\leq k\leq l$.   The proof is complete.
\end{proof}

\begin{proposition}
The joint distribution of $(x_i)_{i\in\mathbb{N}}$  with respect to $\Phi$  is monotonically spreadable.
\end{proposition}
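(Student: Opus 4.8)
The plan is to imitate, essentially verbatim, the proof of Proposition \ref{mi is ms}, with the state $\Phi$ playing the role that $\phi\circ E$ plays there. The one substantive input is the lemma just proved, which says that $\Phi$ is constant on $\sim_m$-equivalence classes, i.e. $\Phi(x_{\I})=\Phi(x_{\Q})$ whenever $\I\sim_m\Q$; this is the exact analogue for $\Phi$ of the operator-valued identity $E[x_{\I}]=E[x_{\Q}]$ used in Proposition \ref{mi is ms}. By the definition of monotone spreadability for infinite sequences it is enough to show that every finite subsequence is monotonically spreadable, and by Lemma \ref{submono} it suffices to prove that each finite initial segment $(x_1,\dots,x_n)$ is $M_i(n,k)$-invariant for every $k<n$, since any finite subsequence of $(x_i)_{i\in\mathbb{N}}$ lies inside some $(x_1,\dots,x_n)$.

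So first I would fix $n>k$ and a monomial $p=X_{\J}=X_{j_1}\cdots X_{j_L}$ with $\J=(j_1,\dots,j_L)\in[k]^L$, and write $\{\um_{i,j}\}$ for the standard generators of $M_i(n,k)$ with invariant projection $\p$. Expanding the coaction gives
$$\am_{n,k}(X_{\J})=\sum_{\I\in[n]^L}X_{\I}\otimes\um_{i_1,j_1}\cdots\um_{i_L,j_L},$$
so after applying $\mu_{x_1,\dots,x_n}\otimes id$ and compressing by $\p$ on both sides, the right-hand side of the $M_i(n,k)$-invariance condition becomes
$$\sum_{\I\in[n]^L}\Phi(x_{\I})\,\p\,\um_{i_1,j_1}\cdots\um_{i_L,j_L}\,\p.$$
Next I would group the index sequences $\I$ into their $\sim_m$-classes, use the lemma to pull the common value $\Phi(x_{\I})$ out of each class, and then apply Proposition \ref{orthogonal}, which evaluates the inner sum over each class to $\p$ when that class is the class of $\J$ and to $0$ for every other class. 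The whole expression therefore collapses to $\Phi(x_{\J})\,\p=\mu_{x_1,\dots,x_k}(p)\,\p$, which is exactly the $M_i(n,k)$-invariance identity; as $p$, $n$, $k$ are arbitrary this finishes the initial segments, and Lemma \ref{submono} then delivers spreadability of the whole sequence.

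The heavy lifting has already been done before the statement, so I do not expect a genuine obstacle here. Proposition \ref{orthogonal} contributes the combinatorial core (the interplay of the relations defining $M_i(n,k)$ with crests and $\sim_m$), while the preceding lemma contributes the analytic core (the factorization $\phi(apb)=\phi(a)\phi(b)$ together with the infinite-tensor structure of $\Phi$, organized hill by hill). What is left is bookkeeping: matching the two notational conventions for row and column indices between Propositions \ref{orthogonal} and \ref{mi is ms}, and noting that every sum in sight is finite, so regrouping by $\sim_m$-classes raises no convergence question. The only mild care needed is the reduction via Lemma \ref{submono} from initial segments to arbitrary finite subsequences.
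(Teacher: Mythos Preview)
Your proposal is correct and follows essentially the same approach as the paper's own proof: expand the coaction, group the sum over $[n]^L$ into $\sim_m$-equivalence classes, use the preceding lemma to replace $\Phi(x_{\I})$ by the common value on each class, and invoke Proposition \ref{orthogonal} to collapse all classes except that of $\J$. The only difference is that you are slightly more careful in invoking Lemma \ref{submono} to pass from initial segments $(x_1,\dots,x_n)$ to arbitrary finite subsequences, a reduction the paper leaves implicit.
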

\begin{proof}
Fixed $n>k\in\mathbb{N}$,  let $\{\um_{i,j}\}_{i=1,...,n; j=1,..., k}$ be the set of standard generators of $\M(n,k)$. For all $\I=(i_1,...,i_L)\in[k]^L$,  we denote by $\overline{[n]^L}$ the $\sim_m$ equivalence class of $[n]^L $, then
we have 
$$
\begin{array}{rcl}
&&\p\mu_{x_1,...,x_n}\otimes(id_{\M(n,k)})(\am_{n,k}(X_{\I}))\p\\
&=&\sum\limits_{\J\in[n]^L}\mu_{x_1,...,x_n}(X_{\J})\p\um_{\J,\I}\p\\
&=&\sum\limits_{\bar \Q \in \overline{[n]^L}}\sum\limits_{\J\in \bar{\Q}}\mu_{x_1,...,x_n}(X_{\J})\p\um_{\J,\I}\p\\
&=&\sum\limits_{\I\not\in\bar \Q\in \overline{[n]^L}}\sum\limits_{\J\in \bar{\Q}}\mu_{x_1,...,x_n}(X_{\J})\p\um_{\J,\I}\p+\sum\limits_{\J\sim_m\I}\mu_{x_1,...,x_n}(X_{\J})\p\um_{\J,\I}\p\\
&=&\sum\limits_{\I\not\in\bar \Q\in \overline{[n]^L}}\sum\limits_{\J\in \bar{\Q}}\mu_{x_1,...,x_n}(X_{\Q})\p\um_{\J,\I}\p+\sum\limits_{\J\sim_m\I}\mu_{x_1,...,x_n}(X_{\I})\p\um_{\J,\I}\p\\
&=&\sum\limits_{\I\not\in\bar \Q\in \overline{[n]^L}}\mu_{x_1,...,x_n}(X_{\Q})\sum\limits_{\J\in \bar{\Q}}\p\um_{\J,\I}\p+\sum\limits_{\J\sim_m\I}\mu_{x_1,...,x_n}(X_{\I})\p\um_{\J,\I}\p\\
&=&\sum\limits_{\I\not\in\bar \Q\in \overline{[n]^L}}\mu_{x_1,...,x_n}(X_{\Q})\cdot 0+\sum\limits_{\J\sim_m\I}\mu_{x_1,...,x_n}(X_{\I})\p\um_{\J,\I}\p\\
&=&\sum\limits_{\J\sim_m\I}\mu_{x_1,...,x_n}(X_{\I})\p\um_{\J,\I}\p\\
&=&\Phi(x_{\I})\p
\end{array}
$$
 
The proof is complete. 
 
\end{proof}
By direct computations, we have 
$$ \prod\limits_{i=1}^n x_{n+1-i} v^{\otimes\infty}= w^{\otimes n}\otimes v^{\otimes \infty}$$
and 
\begin{equation} \label{eq:norm}
x_{n+1} w^{\otimes n}\otimes v^{\otimes \infty}=2^n w^{\otimes n+1}\otimes v^{\otimes \infty}
\end{equation}
Let $(\hh',\pi',\xi') $ be the GNS representation of the von Neumann algebra generated by $(x_i)_{i=1,...,\infty}$ associated with $\Phi$. We have 
$$\|\pi'(x_{n+1})\|\leq \|x_{n+1}\|=2^{n},$$
but equation \ref{eq:norm} shows that 
$\|\pi'(x_{n+1})\|\geq 2^n$. Therefore, $\|\pi'(x_{n+1})\|=2^n$.

Therefore, there is no bounded endomorphism $\alpha$ on  $\A$ such that $\alpha(x_i)=x_{i+1}$.

\subsection{Tail algebras of bilateral sequences of random variables}  
In the last subsection, we showed that, in a $W^*$-probability space with a non-degenerated normal state, the unilateral shift of a spreadable unilateral sequence of random variables may not be extended to be a bounded endomorphism. Therefore, in general,  we can not define a normal condition expectation by taking the limit of unilateral shifts of variables.  The main reason here is that  the spreadability of variables does not give enough restrictions to control the norms of the variables in our probability space.  In $(\A,\phi)$, a $W^*$-probability space with a faithful state, the norm of a selfadjoint random variable $x\in\A$ is controlled by the moments of $X$, i.e. 
$$\| x\|=\lim\limits_{n\rightarrow \infty}\phi(|x|^n)^{\frac{1}{n}}.$$
But, in our non-degenerated $W^*$-probability spaces, the norm of a random variable depends on all  mixed moments which involve it.  To make the conditional expectation exist, we will consider spreadable sequences of random variables indexed by $\mathbb{Z}$ but not $\mathbb{N}$. In this case, the sequence $(x_i)_{i\in\mathbb{Z}}$ is bilateral. As a consequence, we will have two choices to take limits on defining normal conditional expectations and tail algebras. Before studying properties of tail algebras of bilateral sequences, we introduce some necessary notations and assumptions first.

Let $(\A,\phi)$  is a $W^*-$probability space generated by a spreadable bilateral  sequence of bounded random variables $(x_i)_{i\in\mathbb{Z}}$ and $\phi$ is a non-degenerated normal state.  
We assume that the unit of $\A$ is contained in the WOT-closure of the non-unital algebra generated by $(x_i)_{i\in\mathbb{Z}}$.  
Let  $(\HH, \pi, \xi)$ be the GNS representation of $\A$ associated with $\phi$. Then,  $\{\pi(P(x_i|i\in\mathbb{Z}))\xi | P\in\C\langle X_i |   i\in\mathbb{Z}\rangle\}$ is dense in $\HH$.  For convenience, we will denote  $\pi(y)\xi$ by $\hat y$ for all $y\in\A$. When there is no confusion, we will write $y$ short for $\pi(y)$.  
We  denote by $A_{k+}$ the non-unital algebra generated by $(x_i)_{i\geq k}$ and $A_{k-}$ the non-unital algebra generated by $(x_i)_{i\leq k}$. Let $\A_k^+$ and $\A_k^-$ be the WOT-closure of  $A_{k+}$ and  $A_{k-}$, respectively.

\begin{definition}\normalfont
Let $(\A,\phi)$ be a no-degenerated noncommutative $W^*$-probability space, $(x_i)_{i\in\mathbb{Z}}$ be a bilateral sequence of bounded random variables in $\A$ such that $\A$ is the WOT closure of the non-unital algebra generated by $(x_i)_{i\in\mathbb{Z}}$. The positive tail algebra $\ATP$ of $(x_i)_{i\in\mathbb{Z}}$ is defined as following:
$$ \ATP=\bigcap\limits_{k>0} \A^+_k.$$
In the opposite direction, we define the negative tail algebra $\ATN$ of $(x_i)_{i\in\mathbb{Z}}$ as following:
$$ \ATN=\bigcap\limits_{k<0} \A^-_k.$$
\end{definition}

\begin{remark}
In general, the positive tail algebra and the negative tail algebra are different.
\end{remark}

Even though our framework looks quit different from the framework in \cite{Ko}, we can show that there exists a normal bounded shift of the sequence in a similar way. For completeness, we provide the details here.

\begin{lemma}
There exists a unitary map $U:\HH\rightarrow \HH$ such that $ U(P(x_i| i\in\mathbb{Z}))\xi=P(x_{i+1}| i\in\mathbb{Z})\xi$
\end{lemma}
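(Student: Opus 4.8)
The plan is to define $U$ first on the dense subspace spanned by the vectors $\hat P := \pi(P(x_i\,|\,i\in\mathbb{Z}))\xi$, $P\in\C\langle X_i\,|\,i\in\mathbb{Z}\rangle$, and then extend by continuity. Let $S$ be the $*$-automorphism of $\C\langle X_i\,|\,i\in\mathbb{Z}\rangle$ induced by the bijection $i\mapsto i+1$ of $\mathbb{Z}$, i.e.\ $S(X_i)=X_{i+1}$, and set $U\hat P:=\widehat{SP}$. Everything then reduces to a single point: that $\phi$ is invariant under $S$, that is, $\phi(x_{i_1}\cdots x_{i_n})=\phi(x_{i_1+1}\cdots x_{i_n+1})$ for every word (stationarity). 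Granting this, the assignment $P\mapsto SP$ preserves the GNS inner product, since
\begin{equation*}
\langle \widehat{SP},\widehat{SQ}\rangle=\phi\bigl((SQ)^*(SP)\bigr)=\phi\bigl(S(Q^*P)\bigr)=\phi(Q^*P)=\langle\hat P,\hat Q\rangle,
\end{equation*}
using that $S$ is a $*$-automorphism (so $(SQ)^*=S(Q^*)$) and that $\phi\circ S=\phi$. In particular $U$ is well defined on the dense subspace, because $\hat P=\hat Q$ forces $\|\widehat{S(P-Q)}\|^2=\phi\bigl(S((P-Q)^*(P-Q))\bigr)=\phi\bigl((P-Q)^*(P-Q)\bigr)=0$, and it is isometric there, hence extends to an isometry of $\HH$.

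The substantive step is thus the shift-invariance of $\phi$, which I would deduce from monotone spreadability. First, since $t\mapsto t+1$ preserves the order of $\mathbb{Z}$, the sequences $\I=(i_1,\dots,i_n)$ and $\I+1=(i_1+1,\dots,i_n+1)$ have identical order relations, so every condition in the definition of $\sim_m$ holds trivially and $\I\sim_m\I+1$. Second, I would show that monotone spreadability forces the moment $\phi(x_{\J})$ to depend only on the $\sim_m$-class of $\J$; applied to $\J=\I$ and $\J=\I+1$ this yields $\phi(x_{\I})=\phi(x_{\I+1})$, which is exactly stationarity. To obtain this moment-invariance on $\sim_m$-classes I would fix a symmetric window $\{-N,\dots,N\}$ containing all indices involved (legitimate by Proposition \ref{sub spreadable} together with Lemma \ref{submono}, which make the finite block monotonically spreadable), and extract the required equality from the $M_i(n,k)$-invariance condition $\mu_{x_1,\dots,x_k}(p)\p=\p\,\mu_{x_1,\dots,x_n}\otimes\mathrm{id}(\am_{n,k}(p))\,\p$ by evaluating it in the concrete representations of Proposition \ref{nontrivial representation} and using the orthogonality relations of Proposition \ref{orthogonal} to collapse the right-hand side onto a single $\sim_m$-class.

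I expect this extraction to be the main obstacle. The invariance condition is one-directional and, on its right-hand side, mixes the moments $\phi(x_{\J})$ of many words $\J$; the work is to choose the dimensions $l_1,\dots,l_k$ and the evaluation vectors in Proposition \ref{nontrivial representation} so that precisely the terms of one $\sim_m$-class survive, reducing the sum to a single moment. This is essentially the computation underlying Proposition \ref{orthogonal} and Proposition \ref{mi is ms}, now run in reverse to read off $\phi(x_{\I})=\phi(x_{\I+1})$.

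Finally, to upgrade the isometry $U$ to a unitary I would note that the backward shift $S^{-1}\colon X_i\mapsto X_{i-1}$ yields, by the identical argument, an isometry $V$ with $V\hat P=\widehat{S^{-1}P}$. Then $UV\hat P=U\widehat{S^{-1}P}=\widehat{SS^{-1}P}=\hat P$ and $VU\hat P=\widehat{S^{-1}SP}=\hat P$ on the dense subspace, hence $UV=VU=\mathrm{id}$ on all of $\HH$ by continuity. Therefore $U$ is invertible with $U^{-1}=V$, so $U$ is the desired unitary satisfying $U\,\widehat{P(x_i\,|\,i\in\mathbb{Z})}=\widehat{P(x_{i+1}\,|\,i\in\mathbb{Z})}$.
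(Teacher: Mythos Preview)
Your proposal is correct in structure but takes a far heavier route than the paper, and it also assumes more than the lemma actually has available. At this point in Section~6.2 the standing hypothesis is merely that $(x_i)_{i\in\mathbb{Z}}$ is (classically) spreadable, not monotonically spreadable. The paper's proof obtains stationarity $\phi\circ S=\phi$ in a single stroke from spreadability: for any finite index set $\{j_1<\cdots<j_m\}$ the tuples $(x_{j_1},\dots,x_{j_m})$ and $(x_{j_1+1},\dots,x_{j_m+1})$ are increasing subsequences of the same length of the bilateral sequence, hence equidistributed, so $\phi((P(x_i))^*P(x_i))=\phi((P(x_{i+1}))^*P(x_{i+1}))$ for every polynomial $P$. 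That is the whole argument for the isometry; your detour through $\sim_m$, Proposition~\ref{orthogonal}, and the concrete representations of Proposition~\ref{nontrivial representation} is unnecessary here and, as you yourself flag, would require nontrivial extra work (extracting equality of individual moments from the $M_i(n,k)$-invariance identity) that the paper never performs for this lemma. If you insist on starting from monotone spreadability, the clean move is simply to pass to classical spreadability via the chain $M_i(n,k)_{\mathrm{inv}}\Rightarrow A_i(n,k)_{\mathrm{inv}}\Rightarrow C(I_{n,k})_{\mathrm{inv}}$ from the diagram in Section~4, and then argue as the paper does. Your handling of well-definedness and of unitarity via the backward shift $V$ is fine and matches the paper, which phrases surjectivity equivalently as ``the dense subspace lies in the range of $U$.''
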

\begin{proof}
Since $(x_i)_{i\in\mathbb{Z}}$ is spreadable,  we have
$$ \phi((P(x_i|i\in\mathbb{Z}))^*P(x_i|i\in\mathbb{Z}))= \phi((P(x_{i+1}|i\in\mathbb{Z}))^*P(x_{i+1} |i\in\mathbb{Z})).$$
It implies that 
 $$U( P(x_i|i\in\mathbb{Z})\xi)=P(x_{i+1}|i\in\mathbb{Z})\xi$$ 
 is a well defined  isometry on $\{\pi(P(x_i|i\in\mathbb{Z}))\xi | P\in\C\langle X_i |   i\in\mathbb{Z}\rangle\}$ .
 Since $\{\pi(P(x_i|i\in\mathbb{Z}))\xi | P\in\C\langle X_i |   i\in\mathbb{Z}\rangle\}$ is dense in $\HH$, $U$ can be extended to the whole space $\HH$.  It is obvious that $\{\pi(P(x_i|i\in\mathbb{Z}))\xi | P\in\C\langle X_i |   i\in\mathbb{Z}\rangle\}$ is contained  in the range of $U$.  Therefore, the extension of $U$ is a unitary map on $\HH$. 
\end{proof}

Now, we can define an automorphism $\alpha$ on  $\A$ by the following formula:
$$\alpha(y)=UyU^{-1}.$$

\begin{lemma}
$\alpha$ is the bilateral shift of $(x_i)_{i\in\mathbb{Z}}$, i.e. $$\alpha(x_k)=x_{k+1}$$ for all $k\in\mathbb{Z}$.
\end{lemma}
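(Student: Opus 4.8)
The plan is to verify the identity $\alpha(x_k)=x_{k+1}$ directly on the dense subspace $\{\pi(P(x_i|i\in\mathbb{Z}))\xi\mid P\in\C\langle X_i|i\in\mathbb{Z}\rangle\}$ of $\HH$ and then extend by boundedness. Since $\alpha(x_k)=Ux_kU^{-1}$ by definition, and both $\alpha(x_k)$ and $x_{k+1}$ are bounded operators on $\HH$, it suffices to show that they agree on this dense set; under the identification of $\A$ with its (faithful) GNS image $\pi(\A)$, agreement of the operators is exactly the asserted equality in $\A$.

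First I would fix a noncommutative polynomial $P$ and compute the action of $Ux_kU^{-1}$ on $\pi(P(x_i|i\in\mathbb{Z}))\xi$. The defining property of $U$ gives $U\,\pi(P(x_i|i\in\mathbb{Z}))\xi=\pi(P(x_{i+1}|i\in\mathbb{Z}))\xi$, and inverting this yields $U^{-1}\,\pi(P(x_i|i\in\mathbb{Z}))\xi=\pi(P(x_{i-1}|i\in\mathbb{Z}))\xi$. Here the crucial observation is that $P(x_{i-1}|i\in\mathbb{Z})$ is again a polynomial in the generators, so this vector lies in the dense domain and the defining formula for $U$ continues to apply to it.

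Next I would left-multiply by $x_k$ and re-apply $U$. Multiplication by $x_k$ turns the vector into $\pi\big(x_k\,P(x_{i-1}|i\in\mathbb{Z})\big)\xi$, which corresponds to the word obtained by prepending the letter $x_k$ to the shifted polynomial. Applying $U$ raises every index by one, so the prepended $x_k$ becomes $x_{k+1}$ while $P(x_{i-1}|i\in\mathbb{Z})$ returns to $P(x_i|i\in\mathbb{Z})$; hence
$$ Ux_kU^{-1}\,\pi(P(x_i|i\in\mathbb{Z}))\xi=\pi\big(x_{k+1}\,P(x_i|i\in\mathbb{Z})\big)\xi=x_{k+1}\,\pi(P(x_i|i\in\mathbb{Z}))\xi. $$
This is the desired identity on the dense domain, and boundedness of both sides extends it to all of $\HH$. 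The only point requiring care — the mild obstacle — is the bookkeeping of the index shift through the noncommutative product, namely that prepending the letter $x_k$ and then shifting agrees with shifting and prepending $x_{k+1}$. This is immediate from the defining formula for $U$ once one notes that both $P(x_{i-1}|i\in\mathbb{Z})$ and $x_k\,P(x_{i-1}|i\in\mathbb{Z})$ are polynomials in the generators, so the formula applies to them verbatim.
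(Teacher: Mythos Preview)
Your proof is correct and follows essentially the same route as the paper: both verify $Ux_kU^{-1}=x_{k+1}$ on the dense set of vectors $\pi(P(x_i\mid i\in\mathbb{Z}))\xi$ by applying $U^{-1}$, then $x_k$, then $U$, and conclude by density. Your write-up is simply more explicit about the index bookkeeping and the extension by boundedness.
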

\begin{proof}
For all $y=P(x_i|i\in\mathbb{Z})\xi $, we have
$$\alpha(x_k)y=Ux_kU^{-1}P(x_i|i\in\mathbb{Z})\xi=Ux_kP(x_{i-1}|i\in\mathbb{Z})\xi=x_{k+1}P(x_i|i\in\mathbb{Z})\xi.$$
By the density of  $\{\pi(P(x_i|i\in\mathbb{Z}))\xi | P\in\C\langle X_i |   i\in\mathbb{Z}\rangle\}$,  we have $\alpha(x_k)=x_{k+1}$.
The proof is complete.
\end{proof}

Since $\alpha$ is a normal automorphism of $\A$, we have
\begin{corollary}
For all $k\in\mathbb{Z}$, we have $\alpha(\A_{k}^+)=\A_{k+1}^+$.
\end{corollary}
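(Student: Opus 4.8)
The plan is to prove the equality first at the algebraic level of the generating non-unital algebras and then lift it to the WOT-closures using normality. First I would note that, by the preceding lemmas, $\alpha$ is an algebra automorphism of $\A$ satisfying $\alpha(x_i)=x_{i+1}$ for every $i\in\mathbb{Z}$. Consequently $\alpha$ maps the generating set $\{x_i\mid i\geq k\}$ of $A_{k+}$ bijectively onto $\{x_{i+1}\mid i\geq k\}=\{x_j\mid j\geq k+1\}$, the generating set of $A_{(k+1)+}$. Since the non-unital algebra generated by a family is spanned by its constant-term-free monomials and $\alpha$ is a homomorphism, this yields $\alpha(A_{k+})=A_{(k+1)+}$; the surjectivity half uses that $\alpha^{-1}$ is the reverse shift $x_i\mapsto x_{i-1}$, which sends the generators of $A_{(k+1)+}$ back into $A_{k+}$.

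Next I would check that $\alpha$ commutes with passage to WOT-closures. Because $\alpha(y)=UyU^{-1}$ is implemented by the unitary $U$, it is continuous for the weak operator topology: for a net $y_\lambda\to y$ in WOT and any $\eta,\zeta\in\HH$ one has $\langle Uy_\lambda U^{-1}\eta,\zeta\rangle=\langle y_\lambda U^{-1}\eta,U^{-1}\zeta\rangle\to\langle yU^{-1}\eta,U^{-1}\zeta\rangle$. Applying the same to $U^{-1}$ shows $\alpha^{-1}$ is WOT-continuous, so $\alpha$ is a WOT-homeomorphism and therefore satisfies $\alpha(\overline{S}^{\mathrm{WOT}})=\overline{\alpha(S)}^{\mathrm{WOT}}$ for every subset $S\subseteq\A$. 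Taking $S=A_{k+}$ and combining with the first step gives $\alpha(\A_k^+)=\overline{\alpha(A_{k+})}^{\mathrm{WOT}}=\overline{A_{(k+1)+}}^{\mathrm{WOT}}=\A_{k+1}^+$, which is the claim.

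The computation is short, and the only point demanding any care is the commutation of $\alpha$ with WOT-closures; this is exactly where the normality (equivalently, the unitary implementation) of $\alpha$ enters, and it is the reason the statement is phrased as a consequence of $\alpha$ being a normal automorphism. The algebraic identity $\alpha(A_{k+})=A_{(k+1)+}$ is purely formal once $\alpha(x_i)=x_{i+1}$ is known, so I expect no genuine obstacle beyond that mild topological remark.
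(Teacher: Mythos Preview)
Your proof is correct and follows exactly the approach the paper intends: the paper states the corollary with only the preamble ``Since $\alpha$ is a normal automorphism of $\A$'' and no further argument, and your write-up is precisely the unpacking of that sentence---first the algebraic identity $\alpha(A_{k+})=A_{(k+1)+}$ from $\alpha(x_i)=x_{i+1}$, then the passage to WOT-closures via the unitary implementation of $\alpha$.
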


\begin{lemma}\label{6.7}
Fix $n\in\mathbb{Z}$. Let $y_1, y_2\in A_{n-}$. Then, we have 
$$ \langle \alpha^l(a)\hat y_1,\hat y_2 \rangle=\langle a\hat y_1,\hat y_2 \rangle,$$
where $l\in\mathbb{N}$ and $a\in\A_{n+1}^+$.
\end{lemma}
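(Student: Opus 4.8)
The plan is to pass to joint moments via the GNS construction and then deduce everything from spreadability. Writing $\alpha^l(a)\hat y_1=\pi(\alpha^l(a)y_1)\xi$ and using $\langle\pi(b)\xi,\pi(c)\xi\rangle=\phi(c^*b)$, the two inner products become
$$\langle\alpha^l(a)\hat y_1,\hat y_2\rangle=\phi(y_2^*\,\alpha^l(a)\,y_1),\qquad \langle a\hat y_1,\hat y_2\rangle=\phi(y_2^*\,a\,y_1),$$
so the assertion is exactly the moment identity $\phi(y_2^*ay_1)=\phi(y_2^*\alpha^l(a)y_1)$ for every $a\in\A_{n+1}^+$.

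First I would reduce to monomials. Both $a\mapsto\phi(y_2^*ay_1)$ and $a\mapsto\phi(y_2^*\alpha^l(a)y_1)$ are normal linear functionals on $\A_{n+1}^+$, since $\phi$ is normal, $\alpha^l$ is a normal automorphism, and left and right multiplication by the fixed elements $y_2^*$ and $y_1$ are $\sigma$-weakly continuous. By Kaplansky's density theorem the unit ball of the non-unital $*$-algebra generated by $\{x_i:i\ge n+1\}$ is $\sigma$-weakly dense in the unit ball of its WOT-closure $\A_{n+1}^+$, so it suffices to check the identity for $a$ in that generating algebra; by (conjugate-)linearity I may then take $a=x_{\K}$, $y_1=x_{\I}$, $y_2^*=x_{\J}$ to be monomials, where $\K$ is a sequence of indices $\ge n+1$ and $\I,\J$ are sequences of indices $\le n$ (the $x_i$ are selfadjoint, so $A_{n-}$ is a $*$-algebra and $y_2^*\in A_{n-}$).

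Since $\alpha^l(x_{\K})=x_{\K+l}$, where $\K+l$ denotes $\K$ with every entry raised by $l$, the identity to prove is $\phi(x_{\J}x_{\K}x_{\I})=\phi(x_{\J}x_{\K+l}x_{\I})$. Here I would invoke spreadability in the form of invariance of joint moments under order-preserving relabeling of indices: if a word has distinct index values $v_1<\cdots<v_p$ and $\sigma$ is any strictly increasing injection of these values into $\mathbb{Z}$, then $(x_{v_1},\dots,x_{v_p})\overset{d}{=}(x_{\sigma(v_1)},\dots,x_{\sigma(v_p)})$ and the word is the same noncommutative monomial evaluated on each tuple, so its moment is unchanged. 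I apply this with $\sigma$ the map fixing every index $\le n$ and sending each index $v\ge n+1$ to $v+l$. Because every index of $\I,\J$ is $\le n$ while every index of $\K$ is $\ge n+1$, one has $\max\{\text{left indices}\}\le n<n+1\le\min\{\text{right indices}\}$, so $\sigma$ is strictly increasing on the whole set of indices occurring in $x_{\J}x_{\K}x_{\I}$, hence an admissible relabeling; as it fixes $\I,\J$ and shifts $\K$ to $\K+l$, it carries $x_{\J}x_{\K}x_{\I}$ to $x_{\J}x_{\K+l}x_{\I}$, giving the claimed equality.

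The argument is short, and its heart is the observation that spreadability, though stated only for increasing subsequences, is equivalent to invariance of all joint moments under order-preserving relabelings. The two points that need care are the normality/density reduction (one must use that $\alpha^l$ is $\sigma$-weakly continuous and Kaplansky density to pass from the generating algebra to $\A_{n+1}^+$) and the verification that the gap between the indices $\le n$ and $\ge n+1$ is exactly what makes $\sigma$ monotone; this is precisely why $a$ is taken from the positive algebra at level $n+1$ rather than $n$. I do not expect any genuine obstacle beyond these bookkeeping points.
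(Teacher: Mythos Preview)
Your proof is correct and follows essentially the same route as the paper: reduce to the polynomial algebra via Kaplansky density and normality, then invoke spreadability as invariance of joint moments under order-preserving relabelings of indices. The paper's version is terser (it reduces to $l=1$ and iterates, and leaves the spreadability step implicit), but the substance is identical.
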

\begin{proof}
It is sufficient to prove the statement under the assumption that $l=1$. Since $a\in\A_{n+1}^+$, by Kaplansky's theorem, there exists a sequence $(a_{m})_{m\in\mathbb{N}}\subset A_{(n+1)+}$ such that $\|a_{m}\|\leq\|a\|$ for all $m$ and $a_{m}$ converges to $a$ in WOT. Then, by the  spreadability of $(x_i)_{i\in\mathbb{Z}}$, we have 
$$
\langle \alpha(a)\hat y_1,\hat y_2 \rangle=
\lim\limits_{m\rightarrow \infty}\langle \alpha(a_m)\hat y_1,\hat y_2 \rangle=\lim\limits_{m\rightarrow \infty}\phi(y_2^*a_m\hat y_1)=\langle a\hat y_1,\hat y_2 \rangle
$$
\end{proof}

In the following context, we fix $k\in\mathbb{Z}$.

\begin{lemma}\label{fixed point}
 For all $a\in\A_k ^+$, we have that
$$E^+[a]=WOT-\lim\limits_{l\rightarrow \infty} \alpha^l(a)$$ exists.   Moreover, $E^+[a]\in\ATP$
\end{lemma}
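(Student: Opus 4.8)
The plan is to produce the WOT-limit by testing $\alpha^l(a)$ against a dense family of vectors, where the spreadability packaged in Lemma~\ref{6.7} forces the relevant inner products to be eventually constant, and then to locate the limit inside $\ATP$ using that each $\A^+_m$ is WOT-closed. First I would record the uniform bound: since $\alpha(y)=UyU^{-1}$ for a unitary $U$, the map $\alpha$ is isometric, so $\|\alpha^l(a)\|=\|a\|$ for every $l$, and $\{\alpha^l(a)\}_{l\geq 0}$ is a norm-bounded sequence in $B(\HH)$. Consequently it suffices to show that $\langle \alpha^l(a)\eta_1,\eta_2\rangle$ converges for all $\eta_1,\eta_2$ in a dense subspace, and then to upgrade this to all of $\HH$ by a standard $3\varepsilon$-argument that uses the uniform bound $\|\alpha^l(a)\|\leq\|a\|$.

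For the dense subspace I would take the linear span $D_0$ of the vectors $\hat y$ with $y\in\C\langle X_i\mid i\in\mathbb{Z}\rangle_0$, which is dense in $\HH$ by the construction of the GNS representation. Any such $y$ involves only finitely many of the $x_i$, so given $y_1,y_2$ there is an $n$ with $y_1,y_2\in A_{n-}$. The key point is then that $\langle\alpha^l(a)\hat y_1,\hat y_2\rangle$ is \emph{eventually constant}: since $a\in\A^+_k$ and $\alpha(\A^+_k)=\A^+_{k+1}$, we have $\alpha^l(a)\in\A^+_{k+l}$, so once $l\geq n+1-k$ the element $b:=\alpha^l(a)$ lies in $\A^+_{n+1}$. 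Applying Lemma~\ref{6.7} to $b$ (with $y_1,y_2\in A_{n-}$) gives $\langle\alpha^{l'}(b)\hat y_1,\hat y_2\rangle=\langle b\hat y_1,\hat y_2\rangle$ for all $l'\geq 0$, that is $\langle\alpha^{l+l'}(a)\hat y_1,\hat y_2\rangle=\langle\alpha^l(a)\hat y_1,\hat y_2\rangle$. Hence the sequence is constant for $l\geq\max(0,n+1-k)$, in particular convergent. The resulting limit is a sesquilinear form of norm at most $\|a\|$ on $D_0\times D_0$, which extends to a bounded operator $E^+[a]$ with $\|E^+[a]\|\leq\|a\|$, and the boundedness argument then yields $\alpha^l(a)\to E^+[a]$ in WOT on all of $\HH$.

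For the membership $E^+[a]\in\ATP$ I would fix $m>0$ and observe that for every $l\geq m-k$ we have $\alpha^l(a)\in\A^+_{k+l}\subseteq\A^+_m$, since $\{i\geq k+l\}\subseteq\{i\geq m\}$. Thus the tail of $\{\alpha^l(a)\}$ lies in the WOT-closed algebra $\A^+_m$, and as a WOT-limit of such elements, $E^+[a]\in\A^+_m$. Since $m>0$ was arbitrary, $E^+[a]\in\bigcap_{m>0}\A^+_m=\ATP$.

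I expect the only genuinely delicate point to be the passage from convergence on $D_0$ to honest WOT-convergence on all of $\HH$; this is exactly where the uniform bound $\|\alpha^l(a)\|=\|a\|$ is indispensable, and where one must be slightly careful because the WOT need not be metrizable, so the argument is run through the sesquilinear-form extension rather than through sequential limits directly. Everything else is bookkeeping with the index ranges to guarantee that Lemma~\ref{6.7} applies, namely ensuring $k+l\geq n+1$ before invoking it.
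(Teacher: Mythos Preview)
Your proposal is correct and follows essentially the same route as the paper: test $\alpha^l(a)$ against the dense family of vectors coming from noncommutative polynomials in the $x_i$, invoke Lemma~\ref{6.7} to see the inner products are eventually constant, and then use WOT-closedness of each $\A^+_m$ to place the limit in $\ATP$. You are in fact more explicit than the paper about the uniform bound $\|\alpha^l(a)\|=\|a\|$ and the $3\varepsilon$ passage from $D_0$ to all of $\HH$; the paper simply asserts pointwise convergence to an element $E^+[a]$ without spelling this out.
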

\begin{proof}
For all  $y_1, y_2\in\{\pi(P(x_i|i\in\mathbb{Z}))\xi | P\in\C\langle X_i |   i\in\mathbb{Z}\rangle\}$, there exits $n\in\mathbb{Z}$ such that $y_1,y_2\in\A_{n-}$. For all $l> n-k$,  we have $\alpha^l(a)\in\A_{(n+1)+}$. By Lemma \ref{6.7}, we have 
$$\langle \alpha^{n+1-k}(a)y_1,y_2\rangle=\langle \alpha^{n+2-k}(a)y_1,y_2\rangle=\cdots.$$
Therefore,
$$\lim\limits_{l\rightarrow \infty}\langle \alpha^l(a) y_1,y_2\rangle=\langle \alpha^{n+1-k}(a)y_1,y_2\rangle.$$
$ \alpha^l(a)$  converges pointwisely to an element $E^+[a]$.  Since for all $n>0$, we have $\alpha^{l}(a)\in \A_n^+$ for all $l>n-k+1$. It follows that 
$WOT-\lim\limits_{l\rightarrow \infty} \alpha^l(a)\in\A_n^+$ for all $n$. Hence, $E^+[a]\in\ATP$.
\end{proof}

\begin{proposition}
$E^+$ is normal on $\A^+_k$ for all $k\in\mathbb{Z}$.
\end{proposition}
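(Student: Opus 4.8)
The plan is to reduce the normality of $E^+$ to the normality of the shift automorphisms $\alpha^l$, which are implemented by the unitary $U^l$ and hence automatically $\sigma$-weakly continuous, by using the explicit description of $E^+$ extracted from the proof of Lemma \ref{fixed point}. First I would record that $E^+$ is a contraction: since $\alpha$ is a $*$-automorphism, $\|\alpha^l(a)\|=\|a\|$ for every $a\in\A^+_k$, and because the operator norm is lower semicontinuous for the weak operator topology, the WOT-limit satisfies $\|E^+[a]\|\leq\|a\|$. This uniform bound is precisely what will let me pass from a dense set of vectors to all of $\HH$ at the end.

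Next, fix $n\in\mathbb{Z}$ and $y_1,y_2\in A_{n-}$; the vectors $\hat y_1,\hat y_2$ obtained in this way, as $n$ and the $y$'s vary, span a dense subspace of $\HH$. Exactly as in Lemma \ref{fixed point}, Lemma \ref{6.7} forces the sequence $\langle\alpha^l(a)\hat y_1,\hat y_2\rangle$ to be constant once $\alpha^l(a)\in\A^+_{n+1}$, i.e.\ once $l\geq n+1-k$; writing $l_0=n+1-k$ this gives
$$\langle E^+[a]\hat y_1,\hat y_2\rangle=\langle\alpha^{l_0}(a)\hat y_1,\hat y_2\rangle$$
for every $a\in\A^+_k$. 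The right-hand side is the vector functional $\langle\,\cdot\,\hat y_1,\hat y_2\rangle$ composed with the normal (unitarily implemented) map $\alpha^{l_0}$, hence a normal functional of $a$; in particular $a\mapsto\langle E^+[a]\hat y_1,\hat y_2\rangle$ is continuous on bounded subsets of $\A^+_k$ for the weak operator topology.

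Finally I would remove the restriction on the vectors. Given arbitrary $\eta,\zeta\in\HH$ and $\varepsilon>0$, choose $n$ and $y_1,y_2\in A_{n-}$ with $\|\eta-\hat y_1\|<\varepsilon$ and $\|\zeta-\hat y_2\|<\varepsilon$. The contraction estimate $\|E^+[a]\|\leq\|a\|$ bounds the difference $\langle E^+[a]\eta,\zeta\rangle-\langle E^+[a]\hat y_1,\hat y_2\rangle$ uniformly over the unit ball of $\A^+_k$, so a standard $3\varepsilon$ argument combined with the preceding paragraph shows that $a\mapsto\langle E^+[a]\eta,\zeta\rangle$ is WOT-continuous on bounded sets for \emph{all} $\eta,\zeta\in\HH$. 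Since the weak operator topology and the $\sigma$-weak topology coincide on bounded sets, this is exactly the assertion that $E^+$ is normal on $\A^+_k$.

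The contraction bound and the $3\varepsilon$ estimate are routine. The one genuine subtlety, which I would flag as the main obstacle, is that the stabilization index $l_0=n+1-k$ depends on the chosen vectors $y_1,y_2$ (through $n$), so there is no single automorphism $\alpha^{l_0}$ that realizes $E^+$ against all vectors simultaneously; it is precisely the uniform boundedness of $E^+$ that compensates for this vector-dependence and lets the normality established on the dense subspace propagate to all of $\HH$.
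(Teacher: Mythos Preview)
Your proposal is correct and follows essentially the same approach as the paper: both use the identity $\langle E^+[a]\hat y_1,\hat y_2\rangle=\langle\alpha^{n+1-k}(a)\hat y_1,\hat y_2\rangle$ for $y_1,y_2\in A_{n-}$ (extracted from Lemma~\ref{fixed point}) to reduce normality of $E^+$ to that of the unitarily implemented shift $\alpha^{n+1-k}$. The paper's proof is terser---it checks WOT-convergence only against the dense family of vectors $\hat y_1,\hat y_2$ and stops there---whereas you explicitly record the contraction bound $\|E^+[a]\|\leq\|a\|$ and carry out the $3\varepsilon$ argument to pass from the dense subspace to all of $\HH$; this extra care is justified and fills in a step the paper leaves implicit.
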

\begin{proof}
Let $(a_m)_{m\in\mathbb{N}}\subset\A^+_k$ be a bounded sequence which converges to $0$ in WOT.  For all  $y_1, y_2\in\{\pi(P(x_i|i\in\mathbb{Z}))\xi | P\in\C\langle X_i |   i\in\mathbb{Z}\rangle\}$, there exits $n\in\mathbb{Z}$ such that $y_1,y_2\in\A_{n-}$. Then, we have
$$\lim\limits_{m\rightarrow \infty} \langle E^+[a_{m}]y_1,y_2\rangle=\lim\limits_{m\rightarrow \infty} \langle \alpha^{n+1-k}(a_m)y_1,y_2\rangle=0.$$
The last equality holds because $\alpha^l$ is normal for all $l\in\mathbb{N}$. The proof is complete.
\end{proof}

\begin{remark}
$E^+$ is defined on $\bigcup\limits_{k\in\mathbb{Z}} \A_{k}^+$ but not on $\A$. In general, we can not extend $E^+$ to the whole algebra $\A$.
\end{remark}

\begin{lemma} \label{6.11}$E^+[a]=a$ for all $a\in\ATP$.
\end{lemma}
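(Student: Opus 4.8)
The plan is to reduce everything to Lemma \ref{6.7} by exploiting that a tail element lies in $\A^+_{n+1}$ for arbitrarily large $n$. Recall that $E^+[a]$ is defined as the WOT-limit of $\alpha^l(a)$, which exists by Lemma \ref{fixed point}, and that a bounded operator is determined by the inner products $\langle\,\cdot\,\hat y_1,\hat y_2\rangle$ as $\hat y_1,\hat y_2$ range over the dense set $\{\pi(P(x_i|i\in\mathbb{Z}))\xi\mid P\in\C\langle X_i|i\in\mathbb{Z}\rangle\}$. Thus it suffices to show $\langle E^+[a]\hat y_1,\hat y_2\rangle=\langle a\hat y_1,\hat y_2\rangle$ for all such $y_1,y_2$.

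First I would fix $a\in\ATP$ and arbitrary vectors $\hat y_1,\hat y_2$ in the dense polynomial set. Each of $y_1,y_2$ is a polynomial in finitely many of the $x_i$, so there is an integer $n$, which I may enlarge to be $\geq 0$, such that $y_1,y_2\in A_{n-}$; here I use that $A_{m-}\subseteq A_{n-}$ whenever $m\leq n$. Insisting $n\geq 0$ is the crucial point, because then $n+1>0$, and the defining intersection $\ATP=\bigcap_{k>0}\A^+_k$ forces $a\in\A^+_{n+1}$.

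With $a\in\A^+_{n+1}$ and $y_1,y_2\in A_{n-}$ in hand, Lemma \ref{6.7} applies verbatim and yields $\langle\alpha^l(a)\hat y_1,\hat y_2\rangle=\langle a\hat y_1,\hat y_2\rangle$ for every $l\in\mathbb{N}$. Passing to the WOT-limit in $l$, whose existence is guaranteed by Lemma \ref{fixed point}, gives $\langle E^+[a]\hat y_1,\hat y_2\rangle=\langle a\hat y_1,\hat y_2\rangle$. Since $y_1,y_2$ were arbitrary in a dense set and both $E^+[a]$ and $a$ are bounded operators, I conclude $E^+[a]=a$.

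The only delicate point, and the step I would treat with care, is the simultaneous fit of the indices: one must arrange that the $n$ chosen so that $y_1,y_2\in A_{n-}$ can \emph{also} be taken so that $n+1$ falls in the range of the tail intersection. This is exactly where the tail hypothesis does its work, since $a$ belongs to $\A^+_k$ for \emph{all} $k>0$ at once, enlarging $n$ never costs membership of $a$, while it only helps the containment $y_1,y_2\in A_{n-}$. No genuine estimate is required beyond this bookkeeping, so I do not expect a substantial obstacle.
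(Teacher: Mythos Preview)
Your argument is correct and follows the same strategy as the paper: fix polynomial vectors $y_1,y_2\in A_{n-}$, use that $a\in\ATP\subseteq\A_{n+1}^+$, and conclude that the shifts $\alpha^l(a)$ do not change the inner product $\langle\,\cdot\,\hat y_1,\hat y_2\rangle$. The only cosmetic difference is that the paper re-runs the Kaplansky approximation inside the proof to show $\alpha(a)=a$ (hence $E^+[a]=a$), whereas you invoke Lemma~\ref{6.7} directly and pass to the WOT limit; your route is in fact the more economical packaging of the same idea. Your care about choosing $n\geq 0$ is harmless but unnecessary: since $\A_{k'}^+\subseteq\A_k^+$ for $k\leq k'$, the tail algebra is contained in every $\A_k^+$, not just those with $k>0$.
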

\begin{proof}
 For all  $\hat y_1,\hat  y_2\in\{\pi(P(x_i|i\in\mathbb{Z}))\xi | P\in\C\langle X_i |   i\in\mathbb{Z}\rangle\}$, there exits $n\in\mathbb{Z}$ such that $y_1,y_2\in A_{n-}$. Since $a\in\ATP\subset \A_{n+1}^+$, by Kaplansky's theorem,  there exists a sequence of $(a_m)_{m\in\mathbb{N}}\subset A_{(n+1)+}$ such that $a_m\rightarrow a$ in WOT and $\|a_m\|\leq \|a\|$ for all $m$. Then we have.
 $$\langle a \hat y_1,\hat y_2\rangle=\lim\limits_{m\rightarrow \infty} \langle a \hat y_1,\hat y_2\rangle=\lim\limits_{m\rightarrow \infty} \langle \alpha(a_m) \hat y_1,\hat y_2\rangle= \langle \alpha(a)\hat  y_1, \hat y_2\rangle.$$ 
Since $y_1, y_2$ are arbitrary, we have $a=\alpha(a)$.
\end{proof}

\begin{remark}  One should be careful that $\ATP$ could be a proper subset of the fixed points set of $\alpha$.
\end{remark}

\begin{lemma} $$E^+[a_1ba_2]=a_1E^+[b]a_2$$ for all $ b\in\A^+_k$, $a_1, a_2 \in\ATP$.
\end{lemma}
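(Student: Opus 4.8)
The plan is to exploit two facts already established: that $a_1,a_2\in\ATP$ are actually fixed points of the shift $\alpha$ (this is exactly what the proof of Lemma \ref{6.11} produces, since there one shows $a=\alpha(a)$ for every $a\in\ATP$), and that multiplication by a fixed bounded operator on each side is \emph{separately} WOT-continuous. First I would check that the left-hand side even makes sense. Since the algebras $\A_k^+$ are decreasing in $k$, we have $\ATP\subseteq\A_k^+$ for the fixed $k$ in context; as $\A_k^+$ is a WOT-closed subalgebra and $b\in\A_k^+$, the product $a_1ba_2$ again lies in $\A_k^+$, so $E^+[a_1ba_2]$ is defined by Lemma \ref{fixed point}.

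Next, because $\alpha$ is an automorphism and $\alpha^l(a_1)=a_1$, $\alpha^l(a_2)=a_2$ for every $l\in\mathbb{N}$, I would compute
$$\alpha^l(a_1ba_2)=\alpha^l(a_1)\,\alpha^l(b)\,\alpha^l(a_2)=a_1\,\alpha^l(b)\,a_2.$$
By definition of $E^+$ the left-hand side converges in WOT to $E^+[a_1ba_2]$. For the right-hand side I keep $a_1,a_2$ as fixed multipliers: for any $\hat y_1,\hat y_2$ in the dense set $\{\pi(P(x_i|i\in\mathbb{Z}))\xi\}$,
$$\langle a_1\alpha^l(b)a_2\,\hat y_1,\hat y_2\rangle=\langle \alpha^l(b)\,(a_2\hat y_1),(a_1^*\hat y_2)\rangle\longrightarrow \langle E^+[b]\,(a_2\hat y_1),(a_1^*\hat y_2)\rangle=\langle a_1E^+[b]a_2\,\hat y_1,\hat y_2\rangle,$$
where the convergence is precisely the WOT-convergence $\alpha^l(b)\to E^+[b]$ of Lemma \ref{fixed point}. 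Hence $a_1\alpha^l(b)a_2\to a_1E^+[b]a_2$ in WOT.

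Comparing these two limits of the one sequence $\alpha^l(a_1ba_2)$ gives $E^+[a_1ba_2]=a_1E^+[b]a_2$. The only point requiring care — and what I expect to be the main obstacle — is that WOT-convergence is not preserved under \emph{joint} multiplication, so I must never multiply two converging sequences together; instead I rely solely on separate continuity with $a_1,a_2$ fixed. This is exactly why the fixed-point property of tail elements (Lemma \ref{6.11}) is indispensable: it lets me pull $a_1$ and $a_2$ outside the shift \emph{before} passing to the limit, reducing everything to the single WOT-limit $\alpha^l(b)\to E^+[b]$. The implicit uniform boundedness is automatic, since $\alpha$ is a $*$-automorphism and hence isometric, giving $\|\alpha^l(b)\|=\|b\|$ for all $l$.
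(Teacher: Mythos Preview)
Your proposal is correct and follows essentially the same line as the paper's own proof: use that elements of $\ATP$ are fixed by $\alpha$ (from the proof of Lemma \ref{6.11}), apply $\alpha^l$ multiplicatively, and pass to the WOT limit. You are in fact more careful than the paper, which writes the last limit $\lim_l a_1\alpha^l(b)a_2=a_1E^+[b]a_2$ without explicitly invoking separate WOT-continuity of multiplication as you do.
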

\begin{proof}
By Lemma\ref{6.11}, we have 
$$E^+[a_1ba_2]=\lim\limits_{l\rightarrow\infty} \alpha^l(a_1ba_2)= \lim\limits_{l\rightarrow\infty} \alpha^l(a_1)\alpha^l(b)\alpha^l(a_2)= \lim\limits_{l\rightarrow\infty} a_1\alpha^l(b)a_2=a_1E^+[b]a_2$$
\end{proof}

\section{Conditional expectations of bilateral monotonically spreadable sequence}

In this section, we assume that the joint distribution of  $(x_i)_{i\in\mathbb{Z}}$ is monotonically spreadable.
 
\begin{lemma}\label{7.1}
Fix $n>k\in\mathbb{N}$, let $(u_{i,j})_{i=1,...,n;\, j=1,...,k}$ be the standard generators of $M_i(n,k)$. Then, we have 
$$\phi(a_1x^{l_1}_{i_1}b_1x^{l_2}_{i_2}b_2\cdots b_{m-1}x^{l_m}_{i_m}a_2)\p=\sum\limits_{j_1,...,j_m=1}^n\phi(a_1x^{l_1}_{j_1}b_1x^{l_2}_{j_2}b_2\cdots b_{m-1}x^{l_m}_{j_m}a_2)\p u_{j_1,i_1}\cdots u_{j_m,i_m}\p,$$
where $1\leq i_1,...i_m\leq k$,  $b_1,...,b_{m-1}\in A_{(n+1)+}$ and $a_1,a_2\in A_{0-}$.
\end{lemma}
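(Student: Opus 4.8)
The plan is to deduce the identity from the $M_i(n+n_1+n_2,\,k+n_1+n_2)$-invariance of a suitably chosen finite window, transported down to $M_i(n,k)$ through the homomorphism $\pi$ of Lemma \ref{extension}. First I would reduce to the case where $a_1,a_2,b_1,\dots,b_{m-1}$ are monomials: since $A_{0-}$ and $A_{(n+1)+}$ are the non-unital polynomial algebras generated by $\{x_i: i\le 0\}$ and $\{x_i: i\ge n+1\}$, and both sides are multilinear in these entries, it suffices to take $a_1,a_2$ to be words in $x_i$ with $i\le 0$ and each $b_r$ a word in $x_i$ with $i\ge n+1$. Then I would pick $n_1,n_2\in\mathbb{N}$ so large that every index occurring in $a_1,a_2$ lies in $\{1-n_1,\dots,0\}$ and every index occurring in the $b_r$ lies in $\{n+1,\dots,n+n_2\}$. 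By Lemma \ref{submono} and Proposition \ref{sub spreadable}, the contiguous window $x_{1-n_1},\dots,x_{n+n_2}$, relabelled as $y_1,\dots,y_N$ with $y_j=x_{j-n_1}$ and $N=n+n_1+n_2$, is monotonically spreadable, hence $M_i(N,K)$-invariant for $K=k+n_1+n_2<N$.

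Next I would feed the right word into this invariance and apply $\pi$. Let $p$ be obtained from $a_1x_{i_1}^{l_1}b_1\cdots x_{i_m}^{l_m}a_2$ by placing $a_1,a_2$ in the source variables $X_1,\dots,X_{n_1}$, each factor $x_{i_r}^{l_r}$ in $X_{n_1+i_r}^{l_r}$, and each $b_r$ in $X_{n_1+k+1},\dots,X_{n_1+k+n_2}$ (that is, $b_r$ with its indices lowered by $n-k$). Applying $id\otimes\pi$ to $\mu_{y_1,\dots,y_K}(p)\,\p'=\p'\,(\mu_{y_1,\dots,y_N}\otimes id)(\am_{N,K}(p))\,\p'$, where $\p'=P_1^{(N,K)}$ and $\pi(\p')=\p$, the block structure of Lemma \ref{extension} makes the bookkeeping transparent: the below-block sends each source variable to $X\otimes\p$, so $a_1,a_2$ supply the two outer $\p$'s; the above-block sends its source variables to the genuine $b$-variables $x_{n+1},\dots$ tensored with $I$, so the $b_r$ reappear and contribute only identities on the $M_i(n,k)$ side; and each middle factor spreads via $\am_{n,k}(X_{n_1+i_r})=\sum_{r'}x_{r'}\otimes u_{r',i_r}$. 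The powers collapse because the generators in each column are mutually orthogonal: the relation $P_j u_{i,j}=u_{i,j}$ forces $\sum_{i\ne i'}u_{i,j}u_{i',j}=0$, whence $u_{i,j}u_{i',j}=\delta_{i,i'}u_{i,j}$, so $\am_{n,k}(X_{n_1+i_r}^{l_r})=\sum_{r'}x_{r'}^{l_r}\otimes u_{r',i_r}$. Collecting terms produces exactly $\sum_{j_1,\dots,j_m=1}^n\phi(a_1x_{j_1}^{l_1}b_1\cdots x_{j_m}^{l_m}a_2)\,\p u_{j_1,i_1}\cdots u_{j_m,i_m}\p$ on the right.

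The one step that is not routine, and which I expect to be the main obstacle, is that the left-hand side produced this way is $\mu_{y_1,\dots,y_K}(p)\,\p=\phi\big(a_1x_{i_1}^{l_1}\tilde b_1\cdots x_{i_m}^{l_m}a_2\big)\,\p$, where $\tilde b_r$ is $b_r$ with its indices pushed down from $\{n+1,\dots,n+n_2\}$ to $\{k+1,\dots,k+n_2\}$; this discrepancy is intrinsic to Lemma \ref{extension}, since the above-block sits directly after the $k$ source columns but after the $n$ target columns. To remove it I would use classical spreadability, which monotone spreadability implies: for every increasing tuple $l_1<\dots<l_k$ in $\{1,\dots,n\}$ the assignment $u_{i,j}\mapsto\delta_{i,l_j}$, $\p\mapsto 1$ satisfies the defining relations of $M_i(n,k)$ and hence is a character, and evaluating the $M_i(n,k)$-invariance against it gives $\phi(x_{i_1}\cdots)=\phi(x_{l_{i_1}}\cdots)$, i.e. moments are invariant under order-preserving reindexing of the occurring indices. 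The injection fixing $\{1-n_1,\dots,k\}$ and sending $\{k+1,\dots,k+n_2\}$ onto $\{n+1,\dots,n+n_2\}$ is such an order-preserving map, so $\phi(a_1x_{i_1}^{l_1}\tilde b_1\cdots a_2)=\phi(a_1x_{i_1}^{l_1}b_1\cdots a_2)$, identifying the left-hand side with the one in the statement and completing the proof.
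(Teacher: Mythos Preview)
Your proposal is correct and follows essentially the same route as the paper: reduce to monomials, embed the whole word into a finite window $(y_1,\dots,y_{n+n_1+n_2})$, apply $M_i(n+n_1+n_2,k+n_1+n_2)$-invariance, push it down through the homomorphism $\pi$ of Lemma~\ref{extension}, and then repair the $b$-index shift using (classical) spreadability. The paper carries out exactly these steps; your identification of the index shift as the one non-routine point, and your resolution of it via the characters $u_{i,j}\mapsto\delta_{i,l_j}$ of $M_i(n,k)$, matches the paper's appeal to spreadability of $(y_1,\dots,y_{n'})$.
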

\begin{proof}
Without loss of generality, we assume that there exist $n_1,n_2\in\mathbb{N}$ such that $$a_1,a_2\in A_{[-n_1+1,0]}$$ and $$b_1,...,b_{m-1}\in A_{[n+1,n_2+k]}.$$  Since the map is linear, we just need to consider the case that $a_1, a_2$ and $b_1,...,b_{m-1}$ are products of $(x_i)_{i\in\mathbb{Z}}$.  
Let $$a_1=x_{s_{1,1}}\cdots x_{s_{1,t_1}}$$ and $$a_2=x_{s_{2,1}}\cdots x_{s_{2,t_2}}$$ for some $t_1,t_2\in\mathbb{N}$  and $-n_1+1\leq s_{c,d}\leq0$. Let 
$$b_i=x_{r_{i,1}}\cdots x_{r_{i,t'_{i}}}$$ for $t'_1,...,t'_{m-1}\in\mathbb{N}\cup\{0\}$ and $n+1\leq r_{c,d}\leq k+n_2$. Then,  $(x_{-n_1+1},...,x_{n+n_2})$ is a sequence of length $n+n_1+n_2$, we denote it by $(y_1,...,y_{n+n_1+n_2})$. Let $n'=n+n_1+n_2$ and $k'=k+n_1+n_2$. 
By our assumption, $a_1x^{l_1}_{i_1}b_1x^{l_2}_{i_2}b_2\cdots b_{m-1}x^{l_m}_{i_m}a_2$ is in the algebra generated by $(y_1,....,y_{k'})$. Let  $(u'_{i,j})_{i=1,...,n';\, j=1,...,k'}$ be the standard generators of $M_i(n',k')$ and $\p'$ be the invariant projection. 
 Let $\pi$ be the $C^*$-homomorphism in Lemma \ref{extension} and $id$ be the identity may on $\C\langle X_1,....,X_{n'}\rangle$.
 Since  $1\leq s_{c,d}+n_1\leq n_1$,  we have 
$$id\otimes\pi(\alpha_{n',k'}^{(m)}(X_{s_{i,1}+n_1}\cdots X_{s_{i,t_1}}+n_1))=X_{s_{i,1}+n_1}\cdots X_{s_{i,t_1}+n_1}\otimes\p.$$ 

Since  $n_1+n+1\leq r_{c,d}+n_1\leq n_1+n_2+k$, we have 
$$id\otimes\pi(\alpha_{n',k'}^{(m)}(X_{r_{i,1}+n_1}\cdots X_{r_{1,t'_1}}+n_1))=X_{r_{i,1}+n_1+n-k}\cdots X_{r_{i,t'_i}+n_1+n-k}\otimes I,$$ 
where $I$ is the identity of $M_i(n,k)$.
According to our assumption,  we have  $1\leq i_t \leq k$ for $t=1,....,m$. Then
$$id\otimes\pi(\alpha_{n',k'}^{(m)}(X^{l_t}_{i_t+n_1})=\sum\limits_{j_t=1}^{n} X^{l_t}_{j_t+n_1}\otimes u_{j_t,i_t}.$$

According to the monotone spreadability of $(y_1,...,y_{n'})$ and Lemma\ref{extension}, we have 
$$
\begin{array}{rcl}
&&\phi(a_1x^{l_1}_{i_1}b_1x^{l_2}_{i_2}b_2\cdots b_{m-1}x^{l_m}_{i_m}a_2)\p\\
&=&\mu_{y_1,...,y_{k'}} (X_{s_{1,1}+n_1}\cdots X_{s_{1,t_1}+n_1}  X^{l_1}_{i_1+n_1}\cdots X^{l_m}_{i_m+n_1} X_{s_{1,1}+n_1}\cdots X_{s_{2,t_2}+n_1})\pi(\p')  \\
&=&\p\mu_{y_1,...,y_{n'}}\otimes \pi (\alpha^{(m)}_{n',k'}(X_{s_{1,1}+n_1}\cdots X_{s_{1,t_1}+n_1}  X^{l_1}_{i_1+n_1}\cdots X^{l_m}_{i_m+n_1} X_{s_{1,1}+n_1}\cdots X_{s_{2,t_2}+n_1})) \p\\
&=&\sum\limits_{j_1,...,j_m=1}^n \mu_{y_1,...,y_{n'}}(X_{s_{1,1}+n_1}\cdots X_{s_{1,t_1}+n_1} X^{l_1}_{j_1+n_1}X_{r_{1,1}+n_1+n-k}\cdots\\ &&X_{r_{m-1,t'_{m-1}+n_1}+n-k}X^{l_m+n_1}_{j_m} X_{s_{1,1}+n_1}\cdots X_{s_{2,t_2}})
\p u_{j_1,i_1}\cdots u_{j_m,i_m}\p\\
\end{array}
$$
Notice that $(y_1,...,y_{n'})$ is spreadable and $ n+1\leq r_{,}$, the above equation becomes

$$
\begin{array}{crl}
&&\phi(a_1x^{l_1}_{i_1}b_1x^{l_2}_{i_2}b_2\cdots b_{m-1}x^{l_m}_{i_m}a_2)\p\\
&=&\sum\limits_{j_1,...,j_m=1}^n \mu_{y_1,...,y_{n'}}(X_{s_{1,1}+n_1}\cdots X_{s_{1,t_1}+n_1} X^{l_1}_{j_1+n_1}X_{r_{1,1}+n_1}\cdots\\ &&X_{r_{m-1,t'_{m-1}+n_1}}X^{l_m}_{j_m+n_1} X_{s_{1,1}+n_1}\cdots X_{s_{2,t_2}})
\p u_{j_1,i_1}\cdots u_{j_m,i_m}\p\\
&=&\sum\limits_{j_1,...,j_m=1}^n \phi(x_{s_{1,1}}\cdots x_{s_{1,t_1}}  x^{l_1}_{j_1}x_{r_{1,1}}\cdots x_{r_{m-1,t'_{m-1}}}x^{l_m}_{j_m} x_{s_{1,1}}\cdots x_{s_{2,t_2}})\\
&&\p u_{j_1,i_1}\cdots u_{j_m,i_m}\p\\
&=&\sum\limits_{j_1,...,j_m=1}^n\phi(a_1x^{l_1}_{j_1}b_1x^{l_2}_{j_2}b_2\cdots b_{m-1}x^{l_m}_{j_m}a_2)\p u_{j_1,i_1}\cdots u_{j_m,i_m}\p
\end{array}
$$
 
The proof is complete.
\end{proof}

\begin{lemma}\label{7.2}
Fix $n>k\in\mathbb{N}$, let $(u_{i,j})_{i=1,...,n;\, j=1,...,k}$ be the standard generators of $M_i(n,k)$. Then, we have 
$$E^+[x^{l_1}_{i_1}b_1x^{l_2}_{i_2}b_2\cdots b_{m-1}x^{l_m}_{i_m}]\otimes \p=\sum\limits_{j_1,...,j_m=1}^nE^+[x^{l_1}_{j_1}b_1x^{l_2}_{j_2}b_2\cdots b_{m-1}x^{l_m}_{j_m}]\otimes \p u_{j_1,i_1}\cdots u_{j_m,i_m}\p,$$
where $1\leq i_1,...i_m\leq k$,  $b_1,...,b_{m-1}\in A_{(n+1)+}$.
\end{lemma}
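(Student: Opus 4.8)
The plan is to prove the asserted identity in $\A\otimes M_i(n,k)$ by testing its $\A$-component against the GNS vectors $\hat y_1,\hat y_2$ and reducing everything to Lemma \ref{7.1}. Write $z=x^{l_1}_{i_1}b_1\cdots b_{m-1}x^{l_m}_{i_m}$ and, for $\mathbf j=(j_1,\dots,j_m)\in\{1,\dots,n\}^m$, set $z_{\mathbf j}=x^{l_1}_{j_1}b_1\cdots b_{m-1}x^{l_m}_{j_m}$. Every index occurring in $z$ and in each $z_{\mathbf j}$ is $\geq 1$ (the $x$'s because $i_t,j_t\geq 1$, the $b$'s because $b_t\in A_{(n+1)+}$), so $z$ and all $z_{\mathbf j}$ lie in $A_{1+}\subseteq\A_1^+$; hence $E^+[z]$ and $E^+[z_{\mathbf j}]$ are defined and lie in $\ATP$ by Lemma \ref{fixed point}. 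Writing the difference of the two sides as $D=\sum_\beta d_\beta\otimes m_\beta\in\A\otimes M_i(n,k)$ with the $m_\beta$ linearly independent, the vanishing of $D$ is equivalent to $\langle d_\beta\eta_1,\eta_2\rangle=0$ for all $\beta$ and all $\eta_1,\eta_2$ in a dense subset of $\HH$; applying $\langle\,\cdot\,\hat y_1,\hat y_2\rangle\otimes\mathrm{id}$, it suffices to prove, for all $y_1,y_2$ in the dense polynomial set,
\[
\langle E^+[z]\hat y_1,\hat y_2\rangle\,\p=\sum_{\mathbf j}\langle E^+[z_{\mathbf j}]\hat y_1,\hat y_2\rangle\,\p\, u_{j_1,i_1}\cdots u_{j_m,i_m}\,\p .
\]

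The first step is to reduce the test vectors to $A_{0-}$. Any $y_1,y_2$ in the dense polynomial set involve only finitely many indices, hence lie in $A_{N-}$ for some $N\geq 0$. Since $E^+[z],E^+[z_{\mathbf j}]\in\ATP$ are fixed by $\alpha$ (Lemma \ref{6.11}), and since $\alpha^N(b)=U^N b U^{-N}$ with $U^{-N}\hat y=\widehat{\alpha^{-N}(y)}$, we obtain
\[
\langle E^+[z]\hat y_1,\hat y_2\rangle=\langle \alpha^N(E^+[z])\hat y_1,\hat y_2\rangle=\langle E^+[z]\,\widehat{\alpha^{-N}(y_1)},\widehat{\alpha^{-N}(y_2)}\rangle ,
\]
and likewise for each $z_{\mathbf j}$. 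As $y_i\in A_{N-}$ gives $\alpha^{-N}(y_i)\in A_{0-}$, the displayed identity for $(y_1,y_2)$ follows from the same identity for $(\alpha^{-N}(y_1),\alpha^{-N}(y_2))\in A_{0-}\times A_{0-}$. Thus it is enough to treat $y_1,y_2\in A_{0-}$.

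The second step evaluates $E^+$ against $A_{0-}$ vectors. For $y_1,y_2\in A_{0-}$ and $a\in\A_1^+$, Lemma \ref{6.7} (with $n=0$) gives $\langle\alpha^l(a)\hat y_1,\hat y_2\rangle=\langle a\hat y_1,\hat y_2\rangle$ for every $l$, so from the definition of $E^+$ as the $\mathrm{WOT}$-limit of $\alpha^l$ we get $\langle E^+[a]\hat y_1,\hat y_2\rangle=\langle a\hat y_1,\hat y_2\rangle=\phi(y_2^*ay_1)$. Applying this to $a=z$ and to each $a=z_{\mathbf j}$ (both in $\A_1^+$) turns the target identity into
\[
\phi(y_2^* z\, y_1)\,\p=\sum_{\mathbf j}\phi(y_2^* z_{\mathbf j}\, y_1)\,\p\, u_{j_1,i_1}\cdots u_{j_m,i_m}\,\p .
\]
Now put $a_1=y_2^*$ and $a_2=y_1$; since the $x_i$ are selfadjoint, $A_{0-}$ is a $*$-algebra, so $a_1,a_2\in A_{0-}$. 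The last display is precisely the conclusion of Lemma \ref{7.1} applied to $a_1,a_2\in A_{0-}$ and the same $b_1,\dots,b_{m-1}\in A_{(n+1)+}$. Combining the three steps gives $D=0$, which is the claim.

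The step I expect to be the main obstacle is the reduction of the test vectors to $A_{0-}$: Lemma \ref{7.1} only controls $\phi$ on words flanked by elements of $A_{0-}$, whereas the vectors $\hat y$ needed to separate the tail-algebra operators $E^+[z]$ range over all indices, and $A_{0-}$-vectors alone do not separate points of $\ATP$. The observation that makes the argument go through is that the outputs of $E^+$ are $\alpha$-fixed, so applying $\alpha^{-N}$ moves the flanking vectors back into $A_{0-}$ without altering the relevant inner products.
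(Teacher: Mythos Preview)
Your proof is correct and follows essentially the same approach as the paper's: both reduce the identity to Lemma~\ref{7.1} by shifting the flanking test elements into $A_{0-}$ using the automorphism $\alpha$. The only cosmetic difference is that the paper applies $\alpha^{-m}$ to the test elements, invokes Lemma~\ref{7.1}, shifts back by $\alpha^m$, and then lets $m\to\infty$ to produce $E^+$, whereas you first observe that $E^+[z]$ and $E^+[z_{\mathbf j}]$ are already $\alpha$-fixed (via Lemma~\ref{6.11}) and use this together with Lemma~\ref{6.7} to evaluate the inner products directly without an explicit limit.
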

\begin{proof}
It is necessary to check the two sides of the equation equal to each other pointwisely, i.e. 
\begin{equation}\label{3}
\phi(a_1 E^+[x^{l_1}_{i_1}b_1x^{l_2}_{i_2}b_2\cdots b_{m-1}x^{l_m}_{i_m}]a_2)\p=\sum\limits_{j_1,...,j_m=1}^n \phi(a_1E^+[x^{l_1}_{j_1}b_1x^{l_2}_{j_2}b_2\cdots b_{m-1}x^{l_m}_{j_m}]a_2) \p u_{j_1,i_1}\cdots u_{j_m,i_m}\p
\end{equation}
for all $a_1,a_2\in A_{[-\infty,\infty]}$.
Given $a_1,a_2\in A_{[-\infty,\infty]}$, then there exists $M\in\mathbb{N}$ such that $a_1,a_2\in A_{M-}$. Then,   $$\alpha^{-m}(a_1),\alpha^{-m}(a_2)\in A_{0-}$$
for all $m>M$. By Lemma \ref{7.1}, we have 
$$
\begin{array}{rcl}
&&\phi(\alpha^{-m}(a_1)x^{l_1}_{i_1}b_1x^{l_2}_{i_2}b_2\cdots b_{m-1}x^{l_m}_{i_m}\alpha^{-m}(a_2))\p\\
&=&\sum\limits_{j_1,...,j_m=1}^n\phi(\alpha^{-m}(a_1)x^{l_1}_{j_1}b_1x^{l_2}_{j_2}b_2\cdots b_{m-1}x^{l_m}_{j_m}\alpha^{-m}(a_2))\p u_{j_1,i_1}\cdots u_{j_m,i_m}\p.
\end{array}
$$
Therefore, for all $m>M$,we have
$$
\begin{array}{rcl}

&&\phi(a_1\alpha^{m}(x^{l_1}_{i_1}b_1x^{l_2}_{i_2}b_2\cdots b_{m-1}x^{l_m}_{i_m})a_2)\p\\&=&\sum\limits_{j_1,...,j_m=1}^n\phi(a_1\alpha^m(x^{l_1}_{j_1}b_1x^{l_2}_{j_2}b_2\cdots b_{m-1}x^{l_m}_{j_m})a_2)\p u_{j_1,i_1}\cdots u_{j_m,i_m}\p.
\end{array}
$$
Let $m$ go to $+\infty$, we get equation  \ref{3}.\\
The proof is complete since $a_1,a_2$ are arbitrary.
\end{proof}

\begin{proposition}
Let $(\A,\phi)$ be a $W^*$-probability space, $(x_i)_{i\in\mathbb{Z}}$ a sequence of selfadjoint random variables in $\A$ , $E^+$ be the conditional expectation onto the positive tail algebra $\ATP$. Assume that the joint distribution of $(x_i)_{i\in\mathbb{Z}}$ is monotonically spreadable, then the same is true for the joint distribution with respect to $E^+$, i.e.  for fixed $n>k\in\mathbb{N}$ and $(u_{i,j})_{i=1,...,n;\, j=1,...,k}$ the standard generators of $M_i(n,k)$, we have that 
$$E^+[x^{l_1}_{i_1}b_1x^{l_2}_{i_2}b_2\cdots b_{m-1}x^{l_m}_{i_m}]\otimes \p=\sum\limits_{j_1,...,j_m=1}^nE^+[x^{l_1}_{i_1}b_1x^{l_2}_{i_2}b_2\cdots b_{m-1}x^{l_m}_{i_m}]\otimes \p u_{j_1,i_1}\cdots u_{j_m,i_m}\p,$$
 $1\leq i_1,...,i_m\leq k$, $l_1,...,l_m\in\mathbb{N} $ and $b_1,...,b_n\in\ATP$.
\end{proposition}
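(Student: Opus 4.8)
The plan is to deduce this from Lemma \ref{7.2} by a density argument, approximating the tail elements $b_1,\dots,b_{m-1}\in\ATP$ by elements of $A_{(n+1)+}$, for which the identity is already available. As in the proof of Lemma \ref{7.2}, and using that $\phi$ is non-degenerate so that the functionals $\phi(a_1\,\cdot\,a_2)$ separate the points of $\A$, it suffices to verify the equality of the two $M_i(n,k)$-valued expressions pointwise, i.e.
$$
\phi\big(a_1\,E^+[x^{l_1}_{i_1}b_1\cdots b_{m-1}x^{l_m}_{i_m}]\,a_2\big)\p
=\sum_{j_1,\dots,j_m=1}^n \phi\big(a_1\,E^+[x^{l_1}_{j_1}b_1\cdots b_{m-1}x^{l_m}_{j_m}]\,a_2\big)\,\p u_{j_1,i_1}\cdots u_{j_m,i_m}\p
$$
for all $a_1,a_2$ in the algebra generated by $(x_i)_{i\in\mathbb{Z}}$.

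First I would set up the approximation. Each $b_t\in\ATP\subset\A^+_{n+1}$, and $\A^+_{n+1}$ is the WOT-closure of the non-unital algebra $A_{(n+1)+}$; so Kaplansky's density theorem supplies, for each $t$, a sequence $(b_t^{(s)})_{s\in\mathbb{N}}\subset A_{(n+1)+}$ with $\|b_t^{(s)}\|\leq\|b_t\|$ and $b_t^{(s)}\to b_t$ strongly. For every fixed $s$ the elements $b_1^{(s)},\dots,b_{m-1}^{(s)}$ lie in $A_{(n+1)+}$, so Lemma \ref{7.2} applies and gives the displayed identity with each $b_t$ replaced by $b_t^{(s)}$.

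The remaining task is to pass to the limit $s\to\infty$. Since the $b_t^{(s)}$ are uniformly norm-bounded, the products $x^{l_1}_{i_1}b_1^{(s)}\cdots b_{m-1}^{(s)}x^{l_m}_{i_m}$ stay bounded by $\prod_t\|x_{i_t}\|^{l_t}\prod_t\|b_t\|$ and, by the joint strong-operator continuity of multiplication on bounded sets, converge strongly (hence weakly) to $x^{l_1}_{i_1}b_1\cdots b_{m-1}x^{l_m}_{i_m}$; the same holds for the $j$-indexed products. All these products belong to $\A^+_1$, on which $E^+$ is defined and normal, so $E^+$ of the approximating products converges in WOT to $E^+$ of the limit. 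Therefore each scalar $\phi(a_1E^+[\cdots^{(s)}]a_2)$ converges to $\phi(a_1E^+[\cdots]a_2)$; as the right-hand side is a finite sum over $(j_1,\dots,j_m)$ with fixed coefficients $\p u_{j_1,i_1}\cdots u_{j_m,i_m}\p$, passing to the limit on both sides yields the desired pointwise identity, and hence the proposition.

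I expect the limiting step to be the only real obstacle. The care is entirely in choosing a topology strong enough to be respected simultaneously by multiplication and by $E^+$: weak-operator convergence alone is useless for the products, which is why I would invoke Kaplansky to obtain bounded, strongly convergent approximants, and why the normality of $E^+$ on $\A^+_1$ established earlier (rather than mere weak continuity) is exactly what is needed to push $E^+$ through the limit.
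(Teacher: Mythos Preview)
Your proposal is correct and follows essentially the same approach as the paper: approximate each $b_t\in\ATP\subset\A^+_{n+1}$ by bounded SOT-convergent sequences from $A_{(n+1)+}$ via Kaplansky, apply Lemma \ref{7.2} to the approximants, and pass to the limit using normality of $E^+$. The only cosmetic differences are that the paper takes iterated limits in $t_1,\dots,t_{m-1}$ rather than a single diagonal index $s$, and works directly with the tensor identity rather than reducing to scalars via $\phi(a_1\,\cdot\,a_2)$; neither changes the substance of the argument.
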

\begin{proof}
Since $b_1,...,b_{m-1}\in \ATP\in \A_n^+$, by Kaplansky's theorem,  there exists sequences $$\{b_{s,t}\}_{s=1,...m-1; t\in\mathbb{N}} \subset A_{n+}$$ such that $\|b_{s,t}\|\leq \|b_s\| $ and $\lim\limits_{n\rightarrow\infty} b_{s,t}=b_s$ in SOT for each $s=1,...,m-1$. Therefore, 
$$SOT-\lim\limits_{t_1\rightarrow\infty}x^{l_1}_{i_1}b_{1,t_1}x^{l_2}_{i_2}b_{2,t_2}\cdots b_{m-1,t_m}x^{l_m}_{i_m}= x^{l_1}_{i_1}b_{1}x^{l_2}_{i_2}b_{2,t_2}\cdots b_{m-1,t_m}x^{l_m}_{i_m}.$$
By Lemma \ref{7.2}, we have 
$$
E^+[x^{l_1}_{i_1}b_{1,t_1}x^{l_2}_{i_2}b_{2,t_2}\cdots b_{m-1,t_m}x^{l_m}_{i_m}]\otimes \p=\sum\limits_{j_1,...,j_m=1}^nE^+[x^{l_1}_{j_1}b_{1,t_1}x^{l_2}_{j_2}b_{2,t_2}\cdots b_{m-1,t_{m-1}}x^{l_m}_{j_m}]\otimes \p u_{j_1,i_1}\cdots u_{j_m,i_m}\p$$
Let $t_1$ go to $+\infty$, by normality of $E^+$, we have 
$$
E^+[x^{l_1}_{i_1}b_{1}x^{l_2}_{i_2}b_{2,t_2}\cdots b_{m-1,t_m}x^{l_m}_{i_m}]\otimes \p=\sum\limits_{j_1,...,j_m=1}^nE^+[x^{l_1}_{j_1}b_{1}x^{l_2}_{j_2}b_{2,t_2}\cdots b_{m-1,t_{m-1}}x^{l_m}_{j_m}]\otimes \p u_{j_1,i_1}\cdots u_{j_m,i_m}\p$$
Again, take $t_2,...,t_{m-1}$ to $+\infty$, we have 
\begin{equation}\label{5}
E^+[x^{l_1}_{i_1}b_1x^{l_2}_{i_2}b_2\cdots b_{m-1}x^{l_m}_{i_m}]\otimes \p=\sum\limits_{j_1,...,j_m=1}^nE^+[x_{j_1}b_1x_{j_2}b_2\cdots b_{m-1}x_{j_m}]\otimes \p u_{j_1,i_1}\cdots u_{j_m,i_m}\p
\end{equation}
\end{proof}
According to the universal conditions of $M_i(n,k)$, if $i_s=i_{s+1}$ for some $s$, then the terms on the right hand side are not vanished only if $j_s=j_{s+1}$. Therefore we can shorten the product on the right hand side of \ref{5} if $i_s=i_{s+1}$ for some $s$. We have

\begin{proposition}\label{spreadable polynomial}
Let $(\A,\phi)$ be a $W^*$-probability space, $(x_i)_{i\in\mathbb{Z}}$ a sequence of selfadjoint random variables in $\A$ , $E^+$ be the conditional expectation onto the positive tail algebra $\ATP$. Assume that the joint distribution of $(x_i)_{i\in\mathbb{Z}}$ is monotonically spreadable,  for fixed $n>k\in\mathbb{N}$ and $(u_{i,j})_{i=1,...,n;\, j=1,...,k}$ the standard generators of $M_i(n,k)$, we have that 
$$E^+[p_1(x_{i_1})\cdots p_m(x_{i_m})]\otimes \p=\sum\limits_{j_1,...,j_m=1}^nE^+[p_1(x_{j_1})\cdots p_m(x_{j_m})]\otimes \p u_{j_1,i_1}\cdots u_{j_m,i_m}\p,$$
whenever $1\leq i_1,...,i_m\leq k$, $i_1\neq\cdots \neq i_m$ and $p_1,...,p_m\in\ATP\langle X\rangle_0$.
\end{proposition}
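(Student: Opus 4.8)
The plan is to reduce the $\ATP$-polynomial statement to the monomial identity established in the preceding proposition, and then to collapse the resulting sum using the column relations of $M_i(n,k)$.

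First, by $\C$-linearity of $E^+$ and of $p\mapsto\am_{n,k}(p)$ in each argument $p_l$, it suffices to treat the case where every $p_l(x_{i_l})$ is a single $\ATP$-monomial $b^{(l)}_0 x_{i_l}^{s^{(l)}_1} b^{(l)}_1\cdots x_{i_l}^{s^{(l)}_{r_l}} b^{(l)}_{r_l}$ with $b^{(l)}_t\in\ATP\cup\C$ and all $s^{(l)}_t\geq 1$; the absence of a constant term forces $r_l\geq 1$, so each block genuinely contains a power of $x_{i_l}$. Concatenating, $W:=p_1(x_{i_1})\cdots p_m(x_{i_m})$ becomes a single word $W=b^{(1)}_0\,V\,b^{(m)}_{r_m}$ in which $V$ begins and ends with a power of an $x$ and every interior coefficient — including each junction product $b^{(l)}_{r_l}b^{(l+1)}_0$ — lies in $\ATP$ (a purely scalar junction, possible when some $p_l$ has trivial coefficients, is harmless and may be absorbed into the overall scalar).

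Next I would strip the outer coefficients: since $b^{(1)}_0,b^{(m)}_{r_m}\in\ATP\cup\C$ and $E^+$ is an $\ATP$-bimodule map, $E^+[W]=b^{(1)}_0 E^+[V] b^{(m)}_{r_m}$, and these outer factors (living in the first tensor leg) commute past $\p u_{\cdots}\p$; so it is enough to prove the claim for $V$. But $V$ has exactly the form $x^{\tau_1}_{\iota_1} d_1\cdots d_{N-1} x^{\tau_N}_{\iota_N}$ to which the preceding proposition applies, where the expanded index word $(\iota_1,\dots,\iota_N)$ is a concatenation of constant blocks, block $l$ being the string $i_l,\dots,i_l$ of length $r_l$. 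Invoking that proposition yields
\[
E^+[V]\otimes\p=\sum_{\kappa_1,\dots,\kappa_N=1}^{n} E^+\!\left[x^{\tau_1}_{\kappa_1} d_1\cdots x^{\tau_N}_{\kappa_N}\right]\otimes \p\, u_{\kappa_1,\iota_1}\cdots u_{\kappa_N,\iota_N}\,\p .
\]

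The decisive step is the collapse of this fine sum. Within block $l$ every column index $\iota$ equals $i_l$, so the same-column orthogonality recorded in the remark before the statement (namely $u_{a,i}u_{b,i}=0$ for $a\neq b$, itself a consequence of the monotone relation $P_j u_{i',j'}=u_{i',j'}$ forcing $P_i=\sum_a u_{a,i}$ to be a projection and hence the $u_{a,i}$ mutually orthogonal) kills every term in which the indices $\kappa$ are not constant on block $l$; writing $j_l$ for that common value, the repeated projections coalesce through $u_{j_l,i_l}^{r_l}=u_{j_l,i_l}$. Because $i_l\neq i_{l+1}$, the junction factors $u_{j_l,i_l}u_{j_{l+1},i_{l+1}}$ across distinct blocks survive, so precisely one projection per polynomial remains. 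Re-indexing block $l$ by $j_l$ leaves the interior coefficients $d_s$ untouched and turns each block back into $p_l$ with $x_{i_l}$ replaced by $x_{j_l}$; reattaching $b^{(1)}_0$ and $b^{(m)}_{r_m}$ via the bimodule property then reconstitutes $p_1(x_{j_1})\cdots p_m(x_{j_m})$ verbatim, giving the asserted identity.

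The main obstacle I anticipate is the bookkeeping of this collapse: verifying that re-indexing only the $x$-powers (while fixing the $d_s$) reassembles each block into the literal polynomial $p_l(x_{j_l})$, and checking the edge cases where trivial or scalar coefficients cause adjacent powers to merge. The hypothesis $i_1\neq\cdots\neq i_m$ is exactly what makes every block junction genuine, and hence what guarantees that the collapsed sum carries one projection $u_{j_l,i_l}$ per polynomial rather than fewer.
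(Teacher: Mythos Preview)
Your argument is correct and is essentially the same as the paper's: the paper derives the statement from the preceding proposition (the case $b_1,\dots,b_{m-1}\in\ATP$) via the one-line observation that when $i_s=i_{s+1}$ the universal relations force $j_s=j_{s+1}$, so the product of generators collapses block by block. You carry out exactly this collapse, only with more explicit bookkeeping (the linearity reduction to $\ATP$-monomials, the stripping of outer coefficients via the bimodule property of $E^+$, and the justification of same-column orthogonality from $P_j$ being a projection), all of which the paper leaves implicit.
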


\begin{lemma}\label{maximal index}
Let $(\A,\phi)$ be a $W^*$-probability space, $(x_i)_{i\in\mathbb{Z}}$ a sequence of selfadjoint random variables in $\A$ , $E^+$ be the conditional expectation onto the positive tail algebra $\ATP$. Assume that the joint distribution of $(x_i)_{i\in\mathbb{Z}}$ is monotonically spreadable, then 
$$E^+[p_1(x_{i_1})\cdots p_s(x_{i_s})\cdots  p_m(x_{i_m})] =E^+[p_1(x_{i_1})\cdots E^+[p_s(x_{i_s})]\cdots  p_m(x_{i_m})] $$
whenever $i_s>i_t$ for all $t\neq s$, $i_1\neq\cdots \neq i_m$ and $p_1,...,p_m\in\ATP\langle X\rangle_0 $.
\end{lemma}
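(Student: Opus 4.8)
The plan is to feed the quantum monotone spreadability of the $E^+$-distribution (Proposition \ref{spreadable polynomial}) into the explicit monotone representation of Proposition \ref{nontrivial representation}, tuned so that the maximal variable is \emph{averaged out}, and then finish with a short limit argument exploiting that $E^+$ is the WOT-limit of shifts. Throughout write $A=p_1(x_{i_1})\cdots p_{s-1}(x_{i_{s-1}})$ and $B=p_{s+1}(x_{i_{s+1}})\cdots p_m(x_{i_m})$, and set $k=i_s=\max_t i_t$. The case $m=1$ is trivial because $E^+$ is idempotent, so assume $m\geq 2$, whence $i_s\geq 2$.

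First I would record two facts about $E^+$ powering the limit step. Since every element of $\ATP$ is a fixed point of the shift $\alpha$ (this is what the proof of Lemma \ref{6.11} actually shows) and $\alpha(x_j)=x_{j+1}$, for any $p\in\ATP\langle X\rangle_0$ one gets $\alpha^r(p(x_{i_s}))=p(x_{i_s+r})$; combined with $p(x_{i_s})\in\A^+_{i_s}$ and Lemma \ref{fixed point} this yields $E^+[p(x_{i_s})]=\mathrm{WOT}\text{-}\lim_{r\to\infty}p(x_{i_s+r})$, along a uniformly bounded sequence (the $\alpha^r$ are $\ast$-automorphisms, hence isometric).

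Now the core computation. Apply $id\otimes\pi$ to the identity of Proposition \ref{spreadable polynomial}, with $\pi:M_i(n,k)\to B(\hh_1\otimes\cdots\otimes\hh_k)$ chosen as in Proposition \ref{nontrivial representation} for $l_{i_s}=N$ and $l_c=1$ $(c\neq i_s)$, so $n=N+k-1>k$. On the only nontrivial leg $\hh_{i_s}=\C^N$, each generator $u_{j_t,i_t}$ with $t\neq s$ acts as the rank-one symmetric projection $P_N$ (its column $i_t$ lies strictly to the left of $i_s$), while $u_{j_s,i_s}$ acts as a coordinate projection; the remaining legs are one-dimensional and contribute only the scalar $1$, forcing $j_t$ to be the unique row $\rho_t=i_t$ of block $i_t$. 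Using $\pi(\p)=P_N$ on this leg and $P_N P_{e}P_N=\tfrac1N P_N$, and letting $j_s$ range over the rows $i_s,\ldots,i_s+N-1$ of block $i_s$, the sandwiched word $\pi(\p\,u_{j_1,i_1}\cdots u_{j_m,i_m}\,\p)$ collapses to $\tfrac1N P_N$ for the surviving indices and to $0$ otherwise. Comparing $P_N$-coefficients gives, for every $N\geq 2$,
\[
E^+[A\,p_s(x_{i_s})\,B]=\frac1N\sum_{r=0}^{N-1}E^+[A\,p_s(x_{i_s+r})\,B].
\]
Writing $c_r:=E^+[A\,p_s(x_{i_s+r})\,B]$, this reads $\sum_{r=0}^{N-1}c_r=N\,c_0$ for all $N$, so telescoping forces $c_r=c_0$ for every $r\geq 0$. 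Letting $r\to\infty$ and invoking the WOT-convergence $p_s(x_{i_s+r})\to E^+[p_s(x_{i_s})]$ from the first step, WOT-continuity of left and right multiplication by the fixed bounded operators $A,B$, and normality of $E^+$, one obtains $c_r\to E^+[A\,E^+[p_s(x_{i_s})]\,B]$. Hence $c_0=E^+[A\,E^+[p_s(x_{i_s})]\,B]$, which is exactly $E^+[p_1(x_{i_1})\cdots p_s(x_{i_s})\cdots p_m(x_{i_m})]=E^+[p_1(x_{i_1})\cdots E^+[p_s(x_{i_s})]\cdots p_m(x_{i_m})]$.

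The main obstacle is the representation bookkeeping: checking that, for this block structure, every context generator really acts on the $i_s$-leg as the symmetric projection (this uses that $i_s$ is the \emph{strict} maximum), that the flanking $\p$'s supply the outer $P_N$ making the collapse $P_N P_e P_N=\tfrac1N P_N$ uniform in the position $s$ (so the boundary cases $s=1$ and $s=m$ behave identically), and that repetitions among the non-maximal indices are harmless because their one-dimensional legs are inert. The closing limit is then routine, since the averaged sequence is uniformly bounded and $E^+$ is normal.
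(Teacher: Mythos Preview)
Your proof is correct and follows essentially the same route as the paper: both plug Proposition \ref{spreadable polynomial} into the block representation of Proposition \ref{nontrivial representation} with $l_{i_s}=N$ and the remaining $l_c=1$, obtain $E^+[A\,p_s(x_{i_s})\,B]=\frac{1}{N}\sum_{r=0}^{N-1} E^+[A\,p_s(x_{i_s+r})\,B]$, and then pass to the limit using $p_s(x_{i_s+r})\to E^+[p_s(x_{i_s})]$ in WOT. Your telescoping step (deducing $c_r=c_0$ for each $r$ before letting $r\to\infty$) is a mild tidying of the paper's direct Ces\`aro limit; the only omission is the preliminary reduction, via $E^+\circ\alpha=E^+$ and $\alpha$-invariance of $\ATP$, to the case $i_1,\dots,i_m\in\{1,\dots,k\}$, which Proposition \ref{spreadable polynomial} requires and the paper makes explicit.
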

\begin{proof}
Since $(x_i)_{i\in\mathbb{Z}}$ is spreadable, by Lemma \ref{fixed point},  we have that 
$$\alpha (p_t(x_{i_t}))=p_t(\alpha(x_{i_t}))$$
and 
$$E^+[\alpha^{k'}(a)]=E^+[a]$$ 
for all $a\in\bigcup\limits_{n'\in\mathbb{Z}}\A^+_{n'}$ and $k'\in \mathbb{Z}$.\\
Therefore, it is sufficient to prove the statement under the assumption that $i_1,...,i_m>0$. Let $i_s=k$, $(u_{i,j})_{i=1,...,n+1;\, j=1,...,k}$ the standard generators of $M_i(n+k,k)$. By proposition \ref{spreadable polynomial}, we have 
$$E^+[p_1(x_{i_1})\cdots p_m(x_{i_m})]\otimes \p=\sum\limits_{j_1,...,j_m=1}^{n+k}E^+[p_1(x_{j_1})\cdots p_m(x_{j_m})]\otimes \p u_{j_1,i_1}\cdots u_{j_m,i_m}\p.$$
Now, apply proposition \ref{nontrivial representation} by letting $l_1=\cdots=l_{k-1}=1$ and $l_k=n+1$, then we have 
$$E^+[p_1(x_{i_1})\cdots p_s(x_{i_s})\cdots  p_m(x_{i_m})] \otimes \p=\frac{1}{n+1}\sum\limits_{j_s=k}^{n+k} E^+[p_1(x_{i_1})\cdots p_s(x_{j_s})\cdots  p_m(x_{i_m})] \otimes\p.$$
Since $n$ is arbitrary, and $E^+$ is normal on $\A_0^+$, we have 
$$
\begin{array}{rcl}
&&E^+[p_1(x_{i_1})\cdots p_s(x_{i_s})\cdots  p_m(x_{i_m})] \\
&=&\frac{1}{n+1}\sum\limits_{j_s=k}^{n+k} E^+[p_1(x_{i_1})\cdots p_s(x_{j_s})\cdots  p_m(x_{i_m})]\\
&=&\text{WOT}-\lim\limits_{n\rightarrow \infty} E^+[p_1(x_{i_1})\cdots (\frac{1}{n+1}\sum\limits_{j_s=k}^{n+k}p_s(x_{j_s}))\cdots  p_m(x_{i_m})]\\
&=&\text{WOT}-\lim\limits_{n\rightarrow \infty} E^+[p_1(x_{i_1})\cdots (\frac{1}{n+1}\sum\limits_{t=0}^{n}\alpha^t(p_s(x_{i_s}))\cdots  p_m(x_{i_m})]\\
&=&\text{WOT}-\lim\limits_{n\rightarrow \infty} E^+[p_1(x_{i_1})\cdots E^+[p_s(x_{i_s})]\cdots  p_m(x_{i_m})].
\end{array}
$$
The proof is complete.
\end{proof}

Now, we turn to consider the case that  the maximal index is not unique.
\begin{proposition}\label{ms is mi}
Let $(\A,\phi)$ be a $W^*$-probability space, $(x_i)_{i\in\mathbb{Z}}$ a sequence of selfadjoint random variables in $\A$ , $E^+$ be the conditional expectation onto the positive tail algebra $\ATP$. Assume that the joint distribution of $(x_i)_{i\in\mathbb{Z}}$ is monotonically spreadable, then 
$$E^+[p_1(x_{i_1})\cdots p_s(x_{i_s})\cdots  p_m(x_{i_m})] =E^+[p_1(x_{i_1})\cdots E^+[p_s(x_{i_s})]\cdots  p_m(x_{i_m})] $$
whenever $i_s=\max\{i_1,...,i_n\}$ for all $t\neq s$, $i_1\neq\cdots \neq i_m$ and $p_1,...,p_m\in\ATP\langle X\rangle_0 $.
\end{proposition}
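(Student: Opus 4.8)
The plan is to prove the apparently stronger statement that \emph{all} positions carrying the maximal index can be expected out simultaneously, and then to recover the asserted single pull-out by a short comparison. First I would use shift invariance to normalize the indices: since $E^+[\alpha^t(a)]=E^+[a]$ for $a\in\bigcup_{n}\A_n^+$ (Lemma \ref{fixed point}) and $\alpha^t(p_l(x_{i_l}))=p_l(x_{i_l+t})$, I may replace the whole word $W=p_1(x_{i_1})\cdots p_m(x_{i_m})$ by a shift of it and assume $i_1,\dots,i_m\geq 1$. Write $k:=i_s=\max\{i_1,\dots,i_m\}$ and let $t_1<\cdots<t_r$ be the positions with $i_{t_b}=k$; because consecutive indices differ, these positions are non-adjacent, so any two of them are separated by at least one factor of strictly smaller index.

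The engine is the $E^+$-valued monotone spreadability of Proposition \ref{spreadable polynomial}, evaluated in the representation of Proposition \ref{nontrivial representation}. I would take the generators of $M_i(N+k,k)$ and the homomorphism $\pi$ corresponding to $l_1=\cdots=l_{k-1}=1$, $l_k=N+1$, so that $\hh_1,\dots,\hh_{k-1}$ are one-dimensional while $\hh_k$ has dimension $N+1$. Two features drive the computation. A non-peak factor $u_{j_l,i_l}$ (with $i_l<k$) is sent to $0$ unless $j_l=i_l$, since its tensor leg $\hh_{i_l}$ is one-dimensional; hence all non-peak indices are frozen to their original values. On $\hh_k$ a peak factor $u_{j_{t_b},k}$ acts as the rank-one projection onto a basis vector $e_{c_b}$ (with $c_b=j_{t_b}-k+1\in\{1,\dots,N+1\}$), while every non-peak factor and both boundary copies of $\p$ act as the rank-one projection $P_k$ onto the symmetric vector. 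As consecutive peaks are separated by at least one non-peak factor, the $\hh_k$-component telescopes as $P_k P_{e_{c_1}}P_k\cdots P_{e_{c_r}}P_k=(N+1)^{-r}P_k$, \emph{independently} of $c_1,\dots,c_r$. Feeding this back into Proposition \ref{spreadable polynomial} and cancelling the nonzero factor $\pi(\p)$ yields the averaging identity $E^+[W]=(N+1)^{-r}\sum_{c_1,\dots,c_r=1}^{N+1}E^+[W_c]$, where $W_c$ is $W$ with the variable at peak $t_b$ changed from $x_k$ to $x_{k+c_b-1}$.

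Because the peaks spread independently, the sum factorizes as $E^+[W]=E^+[W_0\,M_N^{(1)}\,W_1\cdots M_N^{(r)}\,W_r]$, where $W_0,\dots,W_r$ are the blocks of non-peak factors and $M_N^{(b)}=\frac{1}{N+1}\sum_{t=0}^{N}\alpha^t(p_{t_b}(x_k))$ is a Ces\`aro mean whose WOT-limit is $E^+[p_{t_b}(x_{i_{t_b}})]\in\ATP$ by Lemma \ref{fixed point}. Letting $N\to\infty$, using normality of $E^+$ and the equality $E^+[p_{t_b}(x_{k+c_b-1})]=E^+[p_{t_b}(x_k)]$, I would pull the peaks out to obtain $E^+[W]=E^+[W_0\,E^+[p_{t_1}(x_{i_{t_1}})]\,W_1\cdots E^+[p_{t_r}(x_{i_{t_r}})]\,W_r]$, the simultaneous pull-out of all maximal positions.

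The stated identity then follows by applying this simultaneous pull-out twice. Applied to $W$ it replaces all $r$ peaks by their conditional expectations; applied to the word $W^{(s)}$ obtained from $W$ by replacing \emph{only} position $s=t_a$ with the tail element $E^+[p_s(x_{i_s})]\in\ATP$ (which leaves a legitimate word whose variable-maximal positions are exactly $\{t_b:b\neq a\}$, the displaced tail factor being absorbed into the adjacent coefficient, with neighboring factors merged if they share an index), it replaces the remaining $r-1$ peaks. Both outputs coincide with the fully expected word, so $E^+[W]=E^+[W^{(s)}]$, which is precisely the claim; for $r=1$ this is just Lemma \ref{maximal index}. The main obstacle I anticipate is the passage $N\to\infty$ in the third step: the means $M_N^{(b)}$ converge only in WOT and not in SOT, so the several peaks cannot honestly be pulled out in one stroke, since a product of WOT-convergent bounded nets need not converge. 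I would resolve this by extracting the peaks one at a time, each time holding the remaining factors fixed; this forces me to spread the peaks over \emph{independent} ranges $N_1,\dots,N_r$ rather than a single $N$, which can be arranged by iterating the averaging identity, after which each individual limit is legitimate because left and right multiplication by a fixed bounded operator is WOT-continuous and $E^+$ is normal, exactly as in the one-peak case of Lemma \ref{maximal index}.
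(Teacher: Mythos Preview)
Your overall architecture matches the paper's: normalize by shifting so that $i_1,\dots,i_m\ge 1$ and $k=\max_l i_l$; apply Proposition~\ref{spreadable polynomial} in the representation of Proposition~\ref{nontrivial representation} with $l_1=\cdots=l_{k-1}=1$, $l_k=N+1$ to obtain the averaging identity
\[
E^+[W]\;=\;\frac{1}{(N+1)^r}\sum_{c_1,\dots,c_r=1}^{N+1}E^+[W_c];
\]
then pass to the limit to get the simultaneous pull-out of all maximal positions; finally compare two applications of the simultaneous pull-out to obtain the single pull-out at position $s$. Your computation of the $\hh_k$-tensor leg and of the averaging identity is correct, and your final comparison step agrees with the paper's.

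The gap is in your handling of the limit $N\to\infty$. You correctly identify that the Ces\`aro means $M_N^{(b)}$ converge only in WOT, so one cannot pass to the limit in the product $W_0 M_N^{(1)}W_1\cdots M_N^{(r)}W_r$ directly. But your proposed repair---decoupling the ranges to independent $N_1,\dots,N_r$ ``by iterating the averaging identity''---is not justified as stated: each application of Proposition~\ref{spreadable polynomial} in this representation averages \emph{all} occurrences of the column index $k$ with the \emph{same} parameter, precisely because every peak sits in column $k$; iterating the identity on a summand $W_c$ changes the index pattern (the maxima are now at various heights $k+c_b-1$) rather than producing a clean product over independent ranges. There is no evident way to separate the $N_b$'s from this device alone.

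The paper bypasses the WOT-product issue by a different mechanism. It splits the sum over $(c_1,\dots,c_r)\in[N+1]^r$ into the terms with pairwise \emph{distinct} $c_b$'s and the rest. On the distinct part each $W_c$ has a \emph{unique} maximal index, so Lemma~\ref{maximal index} applies; peeling off the largest peak and repeating gives
\[
E^+[W_c]\;=\;E^+\bigl[W_0\,E^+[p_{t_1}(x_k)]\,W_1\cdots E^+[p_{t_r}(x_k)]\,W_r\bigr],
\]
independent of $c$, and this part carries weight $\prod_{s=0}^{r-1}(N+1-s)/(N+1)^r\to 1$. The remaining terms are uniformly bounded in norm by $\prod_l\|p_l(x_1)\|$ (using that $\alpha$ is an isometric automorphism), and their total weight is $1-\prod_{s=0}^{r-1}(N+1-s)/(N+1)^r\to 0$. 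Thus the limit is obtained from a finite algebraic identity (iterated Lemma~\ref{maximal index}) plus a counting estimate, with no appeal to WOT-continuity of a multilinear map. This distinct/coinciding split is the piece you should substitute for your ``independent $N_b$'' argument.
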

\begin{proof}
Again, we can assume that $i_1,...,i_t>0$ and $\max\{i_1,...,i_m\}=k$.  Suppose the number $k$ appears $t$ times in the sequence, which are $\{i_{l                                                                                                                                          _j}\}_j=1,...,t$ such that $i_{l_j}=k$ and $l_1< l_2<\cdots<l_t$.  Fix $n,k$ and consider $M_i(n+k,k)$, by proposition \ref{spreadable polynomial} and proposition \ref{nontrivial representation}, we have 

$$
\begin{array}{rcl}
& &E^+[p_1(x_{i_1})\cdots p_{l_1}(x_{i_{l_1}})\cdots p_{l_2}(x_{i_{l_2}})\cdots  p_m(x_{i_m})]\otimes P\\
&=&\sum\limits_{j_{l_1},j_{l_2},...j_{l_t}=k}^{k+n} E^+[p_1(x_{i_1})\cdots p_{l_1}(x_{j_{l_1}})\cdots p_{l_2}(x_{j_{l_2}})\cdots  p_m(x_{i_m})]\otimes PP_{j_{l_1},k}PP_{j_{l_2},k}P\cdots u_{j_{l_t},k}P \\
&=&\frac{1}{(n+1)^t}\sum\limits_{j_{l_1},j_{l_2},...j_{l_t}=k}^{k+n} E^+[p_1(x_{i_1})\cdots p_{l_1}(x_{j_{l_1}})\cdots p_{l_2}(x_{j_{l_2}})\cdots  p_m(x_{i_m})]]\otimes P \\
&=&\frac{1}{(n+1)^t}(\sum\limits_{j_{l_s}\neq j_{l_r}\, \text{if}\, s\neq r}^{N} E^+[p_1(x_{i_1})\cdots p_{l_1}(x_{j_{l_1}})\cdots p_{l_2}(x_{j_{l_2}})\cdots  p_m(x_{i_m})]\otimes P\\
&& + \sum\limits_{j_{l_s}=j_{l_t}\, \text{for some}\, s\neq t}^{N} E^+[p_1(x_{i_1})\cdots p_{l_1}(x_{j_{l_1}})\cdots p_{l_2}(x_{j_{l_2}})\cdots  p_m(x_{i_m})]\otimes P)\\
\end{array}
$$
In the first part of the sum, apply proposition \ref{maximal index} on indices $j_{l_1},...j_{l_{l_t}}$ recursively, it follows that
$$
E^+[p_1(x_{i_1})\cdots p_s(x_{j_{l_1}})\cdots p_s(x_{j_{l_2}})\cdots  p_m(x_{i_m})]=E^+[p_1(x_{i_1})\cdots E[p_{l_1}(x_{j_{l_1}})]\cdots E[p_{l_2}(x_{j_{l_2}})]\cdots  p_m(x_{i_m})].
$$
Since   $E[p_s(x_{j_{l_1}})]= E[p_s(x_{k})]$, for all $j_{l_1},...,j_{l_t}$,
$$
E^+[p_1(x_{i_1})\cdots p_{l_1}(x_{j_{l_1}})\cdots p_{l_2}(x_{j_{l_2}})\cdots  p_m(x_{i_m})]=E^+[p_1(x_{i_1})\cdots E[p_{l_1}(x_{k})]\cdots E[p_{l_2}(x_{k})]\cdots  p_m(x_{i_m})].
$$
Then, we have 
$$\begin{array}{rcl}
&&\frac{1}{(n+1)^t}(\sum\limits_{j_{l_s}\neq j_{l_r}\, \text{if}\, s\neq r}^{N} E^+[p_1(x_{i_1})\cdots p_{l_1}(x_{j_{l_1}})\cdots p_{l_2}(x_{j_{l_2}})\cdots  p_m(x_{i_m})]\otimes P\\
&=&
\frac{\prod\limits_{s=0}^{t-1}(n+1-s)}{{n+1}^t}E^+[p_1(x_{i_1})\cdots E[p_{l_1}(x_{k})]\cdots E[p_{l_2}(x_{k})]\cdots  p_m(x_{i_m})]\otimes P\\
\end{array},
$$
which converges to $E[p_s(x_{k})]\cdots E[p_s(x_{k})]\cdots  p_m(x_{i_m})]\otimes P$ in norm as $n$ goes to $+\infty$.\\

To the second part of the sum, we have 
$$\begin{array}{rcl}
&&\|E^+[p_1(x_{i_1})\cdots p_{l_1}(x_{j_{l_1}})\cdots p_{l_2}(x_{j_{l_2}})\cdots  p_m(x_{i_m})]\|\\
&\leq&\|p_1(x_{i_1})\cdots p_{l_1}(x_{j_{l_1}})\cdots p_{l_2}(x_{j_{l_2}})\cdots  p_m(x_{i_m})\|\\
&\leq&\|p_1(x_{i_1})\|\cdots \|p_{l_1}(x_{j_{l_1}})\|\cdots \|p_{l_2}(x_{j_{l_2}})\|\cdots  \|p_m(x_{i_m})\|\\
&\leq&\|p_1(x_{1})\|\cdots \|p_{l_1}(x_{1})\|\cdots \| p_{l_2}(x_{1})\|\cdots  \| p_m(x_{1})\|\\
\end{array}$$
which is finite. Therefore,
$$
\begin{array}{rcl}
&&|\sum\limits_{j_{l_s}=j_{l_t}\, \text{for some}\, s\neq t}^{N} E^+[p_1(x_{i_1})\cdots p_{l_1}(x_{j_{l_1}})\cdots p_{l_2}(x_{j_{l_2}})\cdots  p_m(x_{i_m})]\|  \\  &\leq& (1-\frac{\prod\limits_{s=0}^{t-1}(n+1-s)}{(n+1)^t})\|p_1(x_{1})\|\cdots \|p_{l_1}(x_{1})\|\cdots \| p_{l_2}(x_{1})\|\cdots  \| p_m(x_{1})\|
\end{array}
$$
goes to $0$ as $n$ goes to $+\infty$.

Therefore, we have 
$$E^+[p_1(x_{i_1})\cdots p_{l_1}(x_{i_{l_1}})\cdots p_{l_2}(x_{i_{l_2}})\cdots  p_m(x_{i_m})]=E^+[p_1(x_{i_1})\cdots E[p_{l_1}(x_{k})]\cdots E[p_{l_2}(x_{k})]\cdots  p_m(x_{i_m})]$$
The same we can show that
$$
\begin{array}{rcl}
&&E^+[p_1(x_{i_1})\cdots p_{l_1}(x_{k})\cdots E^+[ p_{s}(x_{i_s})]\cdots p_{l_2}(x_{k})\cdots  p_m(x_{i_m})]\\
&=&E^+[p_1(x_{i_1})\cdots E[p_{l_1}(x_{k})]\cdots E[p_{l_2}(x_{k})]\cdots  p_m(x_{i_m})]\\
\end{array}
$$
which implies

$$E^+[p_1(x_{i_1})\cdots p_s(x_{i_s})\cdots  p_m(x_{i_m})] =E^+[p_1(x_{i_1})\cdots E^+[p_s(x_{i_s})]\cdots  p_m(x_{i_m})] $$
\end{proof}

\section{de Finetti type theorem for monotone spreadability }
\subsection{Proof of main theorem 1}
Now, we turn to prove our main theorem for monotone independence:
\begin{theorem}
Let $(\A,\phi)$ be a non degenerated $W^*$-probability space and $(x_i)_{i\in\mathbb{Z}}$ be a bilateral infinite sequence of selfadjoint random variables which generate $\A$. Let $\A^+_k$ be the WOT closure of the non-unital algebra generated by $\{x_i|i\geq k\}$. Then the following are equivalent:
\begin{itemize}
\item[a)] The joint distribution of $(x_i)_{i\in \mathbb{Z}}$  is monotonically spreadable. 

\item[b)]  For all $k\in\mathbb{Z}$, there exits a $\phi$ preserving conditional expectation $E_k:\A^+_k\rightarrow \ATP$ such that the sequence  $(x_i)_{i\geq k}$ is identically distributed and monotone  with respect $E_k$.  Moreover, $E_k|_{\A_{k'}}=E_{k'}$ when $k\geq k'$.
\end{itemize}
\end{theorem}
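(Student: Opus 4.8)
The plan is to prove the two implications separately, with $b)\Rightarrow a)$ being the routine direction and $a)\Rightarrow b)$ carrying the real content.

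For $b)\Rightarrow a)$, I would reduce to finite subsequences. Given any finite subsequence $(x_{i_1},\dots,x_{i_l})$ with $i_1<\cdots<i_l$, choose $k\leq i_1$; then these variables lie in $(x_i)_{i\geq k}$, which by hypothesis is identically distributed and monotonically independent with respect to $E_k$, and a subfamily of an identically distributed monotonically independent family retains both properties. Since $E_k$ is $\phi$-preserving we have $\phi(\cdot)=\phi(E_k[\cdot])$, so Proposition \ref{mi is ms} applies and shows $(x_{i_1},\dots,x_{i_l})$ is monotonically spreadable. As this holds for every finite subsequence, $(x_i)_{i\in\mathbb{Z}}$ is monotonically spreadable by definition.

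For $a)\Rightarrow b)$, I would take $E_k$ to be the restriction to $\A^+_k$ of the map $E^+$ built in Section 6. The results there already give that $E^+$ exists on each $\A^+_k$, is normal, fixes $\ATP$ pointwise (Lemma \ref{6.11}), and is an $\ATP$-bimodule map, hence a conditional expectation $\A^+_k\to\ATP$. That $E^+$ is $\phi$-preserving follows from normality of $\phi$ together with $\phi\circ\alpha=\phi$ (a consequence of spreadability), since $\phi(E^+[a])=\lim_l\phi(\alpha^l(a))=\phi(a)$. Identical distribution of the $x_i$ over $\ATP$ follows from $E^+\circ\alpha=E^+$ and $\alpha^{\,j-i}(x_i)=x_j$, giving $E^+[p(x_j)]=E^+[p(x_i)]$ for all $p$. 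The compatibility relation is automatic: for $k\geq k'$ one has $\A^+_k\subseteq\A^+_{k'}$ and both $E_k,E_{k'}$ are restrictions of the single map $E^+$, so $E_{k'}|_{\A^+_k}=E_k$.

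The heart of the matter is the monotone independence of $(x_i)_{i\geq k}$ with respect to $E_k$. Proposition \ref{ms is mi} supplies the factorization $E^+[\cdots p_s(x_{i_s})\cdots]=E^+[\cdots E^+[p_s(x_{i_s})]\cdots]$ at a \emph{globally} maximal index $i_s$, whereas the definition of monotone independence demands this at every \emph{local} peak $i_{s-1}<i_s>i_{s+1}$. Bridging this gap is the main obstacle, and I would handle it by induction on the number of positions whose index exceeds $i_s$: when that number is zero the chosen peak is the global maximum and Proposition \ref{ms is mi} applies directly; otherwise I factor out the global maximum via Proposition \ref{ms is mi}, absorb the resulting $\ATP$-valued factors into the adjacent $\ATP$-polynomials (using that $\ATP\langle X\rangle_0$ is stable under left and right multiplication by $\ATP$), and thereby shorten the word until the chosen local peak becomes maximal, at which point the inductive hypothesis closes the argument. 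This establishes precisely the defining relation of monotone independence over $\ATP$, completing $a)\Rightarrow b)$.
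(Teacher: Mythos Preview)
Your proposal is correct and follows essentially the same strategy as the paper: both directions match, and for $a)\Rightarrow b)$ the paper likewise takes $E_k=E^+|_{\A^+_k}$ and reduces monotone independence at a local peak to Proposition~\ref{ms is mi} via an induction that strips away the global maxima (the paper inducts on the value $\max\{i_1,\dots,i_m\}$ rather than on the number of positions exceeding $i_s$, but the mechanism is identical). One step you leave implicit is the ``restore'' move: after factoring at $i_s$ in the shortened word you must reapply Proposition~\ref{ms is mi} to the word with $E^+[p_s(x_{i_s})]$ in place, putting the removed global-maximum factors back and recovering the identity for the original word; the paper spells this out explicitly.
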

\begin{proof}
\lq\lq $b)\Rightarrow a) $ \rq\rq follows corollary \ref{mi is ms}\\

We will prove \lq\lq $a)\Rightarrow b) $ \rq\rq by induction.  Since the sequence is spreadable, it is suffices to prove $a)\Rightarrow b) $ for $k=1$:\\
By the results in the previous two sections, there exists a conditional expectation $E_k:\A^+_k\rightarrow \ATP$ such that the sequence  $(x_i)_{i\geq k}$ is identically distributed with respect to $E_k$ and $E_k|_{\A_{k'}}=E_{k'}$ when $k\geq k'$.  
Actually, $E_k$ is the restriction of $E^+$ on $\A^+_k$. 
Since the sequence is spreadable, we just need to show that the sequence  $(x_i)_{i\in\mathbb{N}}$ is  monotonically independent with respect to $E_1$, i.e.
\begin{equation}\label{equaution 6}
E^+[p_1(x_{i_1})\cdots p_s(x_{i_s})\cdots  p_m(x_{i_m})] =E^+[p_1(x_{i_1})\cdots E^+[p_s(x_{i_s})]\cdots  p_m(x_{i_m})]
\end{equation} 
 $i_{s-1}<i_s>i_{s+1}$, $i_1\neq\cdots \neq i_m$, $i_1,...,i_m\in\mathbb{N}$ and $p_1,...,p_m\in\ATP\langle X\rangle $.\\
 Now, we prove this equality by induction on the maximal index of $\{i_1,...,i_m\}$:\\
 When $\max\{i_1,...,i_m\}=1$, then  equality is true because $i_s=1$ and the length of the sequence $(i_1,...,i_m)$ can only be $1$.\\
 Suppose the equality holds for $\max\{i_1,...,i_m\}=n$. When $\max\{i_1,...,i_m\}=n+1$, we have two cases:\\
{\bf Case 1: $i_s=n+1$.} In this case  the equality follows proposition \ref{ms is mi}.\\

\noindent{\bf Case 2: $i_s\leq n$.} Suppose the number $n+1$ appears $t$ times in the sequence, which are $\{i_{l                                                                                                                                          _j}\}_j=1,...,t$ such that $i_{l_j}=k$ and $l_1< l_2<\cdots<l_t$.  Since $i_{s-1}<i_s>i_{s+1}$, $i_{s-1},i_s,i_{s+1}\neq n+1$. By proposition \ref{ms is mi}, we have:
$$
\begin{array}{rcl}
&& E^+[p_1(x_{i_1})\cdots p_{l_1}(x_{i_{l_1}}) \cdots p_{s-1}(x_{i_{s-1}}) p_s(x_{i_s})p_{s+1}(x_{i_{s+1}}) \cdots p_{l_t}(x_{i_{l_t}})\cdots  p_m(x_{i_m})]\\ 
&=&E^+[p_1(x_{i_1})\cdots E^+[p_{l_1}(x_{i_{l_1}})] \cdots p_{s-1}(x_{i_{s-1}}) p_s(x_{i_s})p_{s+1}(x_{i_{s+1}}) \cdots E^+[p_{l_t}(x_{i_{l_t}})]\cdots  p_m(x_{i_m}))]\\ 
\end{array}
$$
Notice that 
$$p_1(x_{i_1})\cdots E^+[p_{l_1}(x_{i_{l_1}})] \cdots p_{s-1}(x_{i_{s-1}}) p_s(x_{i_s})p_{s+1}(x_{i_{s+1}}) \cdots E^+[p_{l_t}(x_{i_{l_t}})]\cdots  p_m(x_{i_m})\in\ATP\langle X_1,...,X_n\rangle$$
by induction, we have 
$$
\begin{array}{rcl}
&&E^+[p_1(x_{i_1})\cdots E^+[p_{l_1}(x_{i_{l_1}})] \cdots p_{s-1}(x_{i_{s-1}}) p_s(x_{i_s})p_{s+1}(x_{i_{s+1}}) \cdots E^+[p_{l_t}(x_{i_{l_t}})\cdots  p_m(x_{i_m})]\\ 
&=&E^+[p_1(x_{i_1})\cdots E^+[p_{l_1}(x_{i_{l_1}})] \cdots p_{s-1}(x_{i_{s-1}}) E^+[p_s(x_{i_s})]p_{s+1}(x_{i_{s+1}}) \cdots E^+[p_{l_t}(x_{i_{l_t}})]\cdots  p_m(x_{i_m})]\\ 
&=& E^+[p_1(x_{i_1})\cdots p_{l_1}(x_{i_{l_1}}) \cdots p_{s-1}(x_{i_{s-1}}) E^+[p_s(x_{i_s})]p_{s+1}(x_{i_{s+1}}) \cdots p_{l_t}(x_{i_{l_t}})\cdots  p_m(x_{i_m})]
\end{array}
$$
The last equality follows proposition \ref{ms is mi}. This our desired conclusion.

\end{proof}

\subsection{ Conditional expectation $E^-$}  
We do not know whether we can extend $E^+$ to the whole space $\A$. But,  the conditional expectation $E^-$ can be extended to the whole algebra $\A$ if the bilateral sequence $(x_i)_{i\in\mathbb{Z}}$ is monotonically spreadable.   Given $a,b,c\in A_{[-\infty,\infty]}$, then there exists $L\in\mathbb{N}$ such that $a,b,c\in A_{[-L,L]}$. Therefore, $\alpha^{-3L}(c)\in \A_{[-4L,-3L]}$.   Since $(x_{-4L},x_{-4L+1},...)$ is monotonically with respect to $E^+$,  we have 
$$ \begin{array}{rcl}
&&\phi(aE^-[b] c)\\
&=&\lim\limits_{n\rightarrow \infty}\phi(a\alpha^{-n}(b)c)\\
&=&\lim\limits_{n\rightarrow \infty,n>4L}\phi(a\alpha^{-n}(b)c)\\
&=&\lim\limits_{n\rightarrow \infty,n>4L}\phi(E^+[a\alpha^{-n}(b)c])\\
&=&\lim\limits_{n\rightarrow \infty,n>4L}\phi(E^+[E^+[a]\alpha^{-n}(b) E^+[c]])\\
&=&\lim\limits_{n\rightarrow \infty}\phi(E^+[a]\alpha^{-n}(b) E^+[c])\\
&=&\lim\limits_{n\rightarrow \infty}\phi(E^+[a]E^-[b]  E^+[c])\\
\end{array}
$$

Since $\A$ is generated by  countablely many operators, by Kaplansky's density theorem,  for all $y\in\A$, there exists a sequence $\{y_n\}_{n\in\mathbb{N}}\subset A_{[-\infty,\infty]}$ such that $\|y_n\|\leq \|y\|$ for all $n$ and $y_n$ converges to $y$ in WOT. Then,  for all $a,c\in  A_{[-\infty,\infty]}$ we have 
$$
\lim\limits_{n\rightarrow \infty}\phi(aE^-[y_n]c)=\lim\limits_{n\rightarrow \infty}\phi(E^+[a]y_nE^+[c])=\phi(E^+[a]yE^+[c])
$$
Therefore, $ E^-[y_n]$ converges to an element $y'$ pointwisely. Moreover, $y'$ depends only on $y$.  If we define $E^-[y]=y'$, then we have 
\begin{proposition}\label{extention1}
Let $(\A,\phi)$ be a non-degenerated $W^*$-probability space and $(x_i)_{i\in\mathbb{Z}}$ be a bilateral infinite sequence of selfadjoint random variables which generate $\A$. If $(x_i)_{i\in\mathbb{Z}}$ is monotonically spreadable, then the negative conditional expectation $E^-$ can be extend to the whole algebra $\A$ such that 
$$\phi(aE^-[y]b)=\phi(E^+[a]yE^+[c])$$
for all $y\in\A$ and $a,c\in A_{[-\infty,\infty]}$. Moreover, the extension is normal.
\end{proposition}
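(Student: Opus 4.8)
The plan is to take the identity derived in the computation preceding the statement as the engine of the proof, and then to upgrade it from the algebraic core $A_{[-\infty,\infty]}$ to all of $\A$ by a density-plus-boundedness argument. Concretely, I would first record that for $a,b,c \in A_{[-\infty,\infty]}$ one has
$$\phi(a\,E^-[b]\,c)=\phi(E^+[a]\,b\,E^+[c]).$$
The mechanism behind this is exactly the chain shown above: once $n$ is large enough that all indices occurring in $\alpha^{-n}(b)$ lie strictly below those of $a$ and $c$, the product $a\,\alpha^{-n}(b)\,c$ has its highest indices in $a$ and $c$, so the monotone independence of $(x_i)_{i\ge m}$ with respect to $E^+$ lets one replace $E^+[a\,\alpha^{-n}(b)\,c]$ by $E^+[a]\,\alpha^{-n}(b)\,E^+[c]$; then, because $E^+[a],E^+[c]\in\ATP$ are fixed by $\alpha$ (Lemma \ref{6.11}) and $\phi\circ\alpha=\phi$, the quantity $\phi(E^+[a]\,\alpha^{-n}(b)\,E^+[c])=\phi(\alpha^{-n}(E^+[a]\,b\,E^+[c]))$ is in fact independent of $n$ and equals $\phi(E^+[a]\,b\,E^+[c])$. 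This identity is precisely the prescription that any extension must satisfy.

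Next I would construct $E^-[y]$ for an arbitrary $y\in\A$. Since $\A$ is generated by countably many bounded operators, Kaplansky's density theorem furnishes a sequence $(y_n)\subset A_{[-\infty,\infty]}$ with $\|y_n\|\le\|y\|$ and $y_n\to y$ in WOT. Each $E^-[y_n]$ is already defined and satisfies $\|E^-[y_n]\|\le\|y_n\|\le\|y\|$, so the family $(E^-[y_n])$ lies in a fixed WOT-compact ball of $\A$. The identity above gives $\phi(a\,E^-[y_n]\,c)=\phi(E^+[a]\,y_n\,E^+[c])$, whose right-hand side converges to $\phi(E^+[a]\,y\,E^+[c])$ because $y_n\to y$ WOT while $E^+[a],E^+[c]$ are fixed bounded operators. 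Thus every matrix coefficient $\phi(a\,E^-[y_n]\,c)$, with $a,c\in A_{[-\infty,\infty]}$, converges; since $\{\hat c : c\in A_{[-\infty,\infty]}\}$ is dense in $\HH$ and the $E^-[y_n]$ are uniformly bounded, a standard three-epsilon argument forces $E^-[y_n]$ to converge in WOT to a single operator $y'\in\A$ (here $\A$ being a von Neumann algebra keeps the limit inside $\A$). I would then set $E^-[y]:=y'$, so that $\phi(a\,E^-[y]\,c)=\phi(E^+[a]\,y\,E^+[c])$ for all $a,c$, and invoke the non-degeneracy of $\phi$ to see that $y'$ is the unique element realising these values; this shows at once that the construction is independent of the approximating sequence and that it agrees with the old $E^-$ on $A_{[-\infty,\infty]}$, and, after one appeal to the normality of the old $E^-$ on each $\A_m^-$, on all of $\bigcup_m\A_m^-$.

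Finally, normality of the extension comes essentially for free from the defining formula: if $(y_\lambda)$ is a bounded net with $y_\lambda\to 0$ in WOT, then $\phi(E^+[a]\,y_\lambda\,E^+[c])\to 0$ for every $a,c\in A_{[-\infty,\infty]}$, hence $\phi(a\,E^-[y_\lambda]\,c)\to 0$; since $\|E^-[y_\lambda]\|\le\|y_\lambda\|$ stays bounded and the vectors $\hat c$, $\widehat{a^*}$ are total, $E^-[y_\lambda]\to 0$ in WOT. The same limiting procedure transports the $\ATN$-bimodule property, positivity, and contractivity of the old $E^-$ to the extension, so it remains a $\phi$-preserving conditional expectation onto $\ATN$. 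I expect the only genuinely delicate point to be the identity of the first paragraph — specifically the passage from $a\,\alpha^{-n}(b)\,c$ to $E^+[a]\,\alpha^{-n}(b)\,E^+[c]$ under $E^+$, which is where the monotone-independence hypothesis and the $\alpha$-invariance of the tail are both used; everything downstream is a routine WOT-compactness and non-degeneracy argument.
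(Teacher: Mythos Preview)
Your proposal is correct and follows essentially the same approach as the paper: derive the identity $\phi(aE^-[b]c)=\phi(E^+[a]\,b\,E^+[c])$ on the algebraic core using monotone independence with respect to $E^+$, then extend by Kaplansky density plus uniform boundedness. Your argument for the limit step is in fact slightly cleaner than the paper's---you use $\alpha$-invariance of $E^+[a],E^+[c]$ together with $\phi\circ\alpha=\phi$ to see directly that $\phi(E^+[a]\,\alpha^{-n}(b)\,E^+[c])$ is independent of $n$, whereas the paper passes through $E^-[b]$ first---and you spell out the well-definedness, normality, and bimodule properties that the paper leaves implicit. One wording caution: the sentence ``replace $E^+[a\,\alpha^{-n}(b)\,c]$ by $E^+[a]\,\alpha^{-n}(b)\,E^+[c]$'' is only true after applying $\phi$ (equivalently, one has $E^+[a\,\alpha^{-n}(b)\,c]=E^+[\,E^+[a]\,\alpha^{-n}(b)\,E^+[c]\,]$ at the operator level); since you immediately take $\phi$ this does not affect the argument, but it is worth stating precisely.
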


\section{de Finetti type theorem for boolean spreadability}
In this section, we assume that $(\A, \phi)$ is a $W^*$-probability space with a non-degenerated normal state and $\A$ is generated by a bilateral sequence of random variables $(x_i)_{i\in\mathbb{Z}}$ and $(x_i)_{i\in\mathbb{Z}}$ are boolean spreadable. 
\begin{lemma}\label{reverse}  Let $y_i=x_{-i}$ for all $i\in\mathbb{Z}$, then $(y_i)_{i\in\mathbb{Z}}$ is also boolean spreadable.
\end{lemma}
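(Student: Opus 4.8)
The plan is to reduce the bilateral statement to a finite one and then build a single algebra map that carries boolean spreadability of a finite sequence to boolean spreadability of its reversal. First I would invoke Proposition \ref{sub spreadable}: the sequence $(y_i)_{i\in\mathbb Z}$ is boolean spreadable iff $(y_i)_{i=-n,\dots,n}$ is boolean spreadable for every $n$, and since $(y_{-n},\dots,y_n)=(x_n,x_{n-1},\dots,x_{-n})$ is exactly the reversal of the truncation $(x_{-n},\dots,x_n)$, which is boolean spreadable, it suffices to prove the finite assertion: if $(z_1,\dots,z_N)$ is boolean spreadable then so is $(w_1,\dots,w_N):=(z_N,\dots,z_1)$, i.e. $w_i=z_{N+1-i}$. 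Fixing $k<N$, I must verify the $B_i(N,k)$-invariance identity $\mu_w(p)\p=\p(\mu_w\otimes \mathrm{id})(\alpha^{(b)}_{N,k}(p))\p$ for every $p\in\C\langle X_1,\dots,X_k\rangle$.

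The key construction is a reversal map on $B_i(N,k)$. The increasing relation $u_{i,j}u_{i',j'}=0$ (for $j<j'$, $i\ge i'$) is \emph{not} preserved by the index reflection $i\mapsto N+1-i$, $j\mapsto k+1-j$ alone, because reflection turns ``increasing'' into ``decreasing''; it is restored only if one simultaneously reverses the order of multiplication. Concretely I would check that the elements $v_{i,j}:=u_{N+1-i,k+1-j}$ together with $\p$ satisfy conditions (1)--(3) of $B_i(N,k)$ inside the opposite algebra $B_i(N,k)^{\mathrm{op}}$, so that the universal property yields a $*$-homomorphism $\Theta:B_i(N,k)\to B_i(N,k)^{\mathrm{op}}$ with $\Theta(u_{i,j})=u_{N+1-i,k+1-j}$ and $\Theta(\p)=\p$. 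Since all defining relations have real coefficients, $B_i(N,k)$ also carries a conjugate-linear $*$-automorphism $J$ fixing every $u_{i,j}$ and $\p$; I set $\Xi:=\Theta\circ J$, a conjugate-linear $*$-homomorphism into the opposite algebra. Applying $\Xi$ to the $B_i(N,k)$-invariance identity for $(z_1,\dots,z_N)$ written for the monomial $q=X_{k+1-i_M}\cdots X_{k+1-i_1}$, several features conspire: $\Xi$ conjugates the scalar moments, passing to $B_i(N,k)^{\mathrm{op}}$ reverses the operator words, and self-adjointness of the $x_i$ converts a reversed conjugated moment $\overline{\phi(z_{b_M}\cdots z_{b_1})}$ back into $\phi(z_{b_1}\cdots z_{b_M})$. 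After the bookkeeping substitution $e_s=N+1-b_{M+1-s}$ the coefficients become exactly the $\mu_w$-moments and the generator word becomes $u_{e_1,i_1}\cdots u_{e_M,i_M}$, which is precisely the right-hand side of the desired $(w_i)$ identity.

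The one residual mismatch is on the left: $\Xi$ produces $\phi(z_{k+1-i_1}\cdots z_{k+1-i_M})$, involving the \emph{first} $k$ of the $z_i$, whereas the $(w_i)$ identity needs $\mu_w(p)=\phi(z_{N+1-i_1}\cdots z_{N+1-i_M})$, involving the \emph{last} $k$. These agree by the shift-invariance $\phi(z_{c_1}\cdots z_{c_M})=\phi(z_{c_1+t}\cdots z_{c_M+t})$ contained in boolean spreadability; this follows, for instance, by evaluating the $B_i(N,k)$-identity in the one-dimensional representations $u_{i,j}\mapsto\delta_{i,l_j}$, $\p\mapsto 1$ attached to increasing sequences $l_1<\cdots<l_k$, which is exactly the classical content of spreadability. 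I expect the main obstacle to be precisely this second step: the index reflection must be routed through the opposite algebra, and the product reversal this forces has to be reconciled with the order-preserving relabeling $y_i=x_{-i}$, which is what compels the use of self-adjointness and of a conjugate-linear (rather than linear) homomorphism. Once $\Xi$ is set up correctly the $(w_i)$ identity drops out, and running the reduction backwards shows that $(y_i)_{i\in\mathbb Z}$ is boolean spreadable.
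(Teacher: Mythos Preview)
Your approach is correct in outline but rests on a misconception that forces an unnecessary detour. You assert that the increasing relation $u_{i,j}u_{i',j'}=0$ (for $j<j'$, $i\ge i'$) is not preserved under the index reflection $u_{i,j}\mapsto u_{N+1-i,\,k+1-j}$, and therefore pass to $B_i(N,k)^{\mathrm{op}}$ and a conjugate-linear map. In fact the relation \emph{is} preserved in $B_i(N,k)$ itself: since each $u_{i,j}$ is a self-adjoint projection, $u_{i,j}u_{i',j'}=0$ is equivalent to $u_{i',j'}u_{i,j}=0$, so the increasing condition is automatically two-sided. Concretely, if $j<j'$ and $i\ge i'$ then with $a=N+1-i$, $b=k+1-j$, $a'=N+1-i'$, $b'=k+1-j'$ one has $b'<b$ and $a'\ge a$, hence $u_{a',b'}u_{a,b}=0$ by the original relation, and taking adjoints gives $u_{a,b}u_{a',b'}=0$.

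This is exactly what the paper does: the reflected generators satisfy the universal conditions of $B_i(N,k)$, so there is an honest (linear, $*$-) $C^*$-homomorphism $\Phi:B_i(N,k)\to B_i(N,k)$ with $\Phi(u_{i,j})=u_{N+1-i,\,k+1-j}$ and $\Phi(\p)=\p$. Applying $\Phi$ directly to the $B_i(N,k)$-invariance identity for $(z_i)$ at the monomial $X_{k+1-i_1}\cdots X_{k+1-i_L}$, and then reindexing $j\mapsto N+1-j$, yields the desired identity for $(y_i)$ with no reversal of operator words and no appeal to self-adjointness of the $x_i$. The shift step you identify is still needed and is handled the same way. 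Your opposite-algebra route works (the $x_i$ are assumed self-adjoint in this section), but it is a round trip through the adjoint that the simpler observation above makes unnecessary.
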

\begin{proof}
By proposition \ref{sub spreadable}, it suffices to show that $(y_{i})_{i=1,...,n}$ is boolean spreadable for all $n\in\mathbb{N}$.  Given a  natural number $k<n$, assume the standard generators of $B_i(n,k)$ are $\{u_{i,j}\}_{i=1,...,n;j=1,...,k}$ and invariant projection $\p$. \\
Consider the matrix $\{u'_{i,j}\}_{i=1,...,n;j=1,...,k}$ such that $u'_{i,j}=u_{n+1-i,k+1-j}.$.  it is obvious that the entries of the matrix are are orthogonal projections and 
$$\sum\limits_{i=1}^nu'_{i,j}\p=  \sum\limits_{i=1}^nu_{i,k+1-j}\p=\p.$$
Given $j,j',i,i'\in \mathbb{N}$ such that $ 1\leq j <j'\leq k$ and $1\leq i\leq i'\leq n$. Then, we have $n+1-i\leq n+1-i'$ and $k+1-j<k+1-j'$. Therefore,
$$ u'_{i,j}u'_{i',j'}=u_{n+1-i,k+1-j}u_{n+1-i',k+1-j'}=0.$$
It implies that $\{u'_{i,j}\}_{i=1,...,n;j=1,...,k}$ and $\p$ satisfy the universal conditions of $B_i(n,k)$. It follows that there exists a unital $C^*$-homomorphism $\Phi:B_i(n,k)\rightarrow B_i(n,k) $ such that:
$$\Phi(u_{i,i})=u'_{i,j},\,\,\,\text{and}\,\,\Phi(\p)=\p.$$
Let $z_i=x_{i-n-1}$ for $i=1,...,n$. Since $(x_i)_{i\in\mathbb{Z}}$ are boolean spreadable, $(z_i)_{i=1,...,n}$ is boolean spreadable. Therefore, for $i_1,...,i_L\in [k]$, we have
$$
\begin{array}{rcl}
&&\phi(y_{i_1}\cdots y_{i_L})\p\\
&=& \phi(y_{n-k+i_1}\cdots y_{n-k+i_L})\p\\
&=&\phi(x_{-n+k-i_1}\cdots x_{n-k-i_L})\p\\
&=&\Phi( \phi(z_{k+1-i_1}\cdots z_{k+1-i_L})\p)\\
&=&\Phi(\sum\limits_{j_1,...,j_L=1}^n\phi(z_{j_1}\cdots z_{j_L})\p u_{j_1, k+1-i_1}\cdots u_{j_L,k+1-i_L}\p)\\
&=&\sum\limits_{j_1,...,j_L=1}^n\phi(z_{j_1}\cdots z_{j_L})\p u_{n+1-j_1,i_1}\cdots u_{n+1-j_L,i_L}\p\\
&=&\sum\limits_{j_1,...,j_L=1}^n\phi(x_{j_1-n-1}\cdots x_{j_L-n-1})\p u_{n+1-j_1,i_1}\cdots u_{n+1-j_L,i_L}\p\\
&=&\sum\limits_{j_1,...,j_L=1}^n\phi(y_{n+1-j_1}\cdots y_{n+1-j_L})\p u_{n+1-j_1,i_1}\cdots u_{n+1-j_L,i_L}\p\\
&=&\sum\limits_{j_1,...,j_L=1}^n\phi(y_{j_1}\cdots y_{j_L})\p u_{j_1,i_1}\cdots u_{j_L,i_L}\p\\
\end{array}
$$
which completes the proof.
\end{proof}

\begin{proposition}\label{equal tail algebra}

$(\A, \phi)$ is a $W^*$-probability space with a non-degenerated normal state and $\A$ is generated by a bilateral sequence of random variables $(x_i)_{i\in\mathbb{Z}}$ and $(x_i)_{i\in\mathbb{Z}}$ are boolean spreadable. Then, $E^-$ and $E^+$ can be extend to the whole algebra $\A$. Moreover, $E^-=E^+$
\end{proposition}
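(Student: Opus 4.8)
The plan is to dispose of the two extension claims quickly, then reduce the equality $E^+=E^-$ to a single reflection identity for mixed moments, which is where the boolean hypothesis genuinely enters.

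\textbf{Extensions.} The implication diagram of Section 4 shows that boolean spreadability implies monotone spreadability, so $(x_i)_{i\in\mathbb{Z}}$ is monotonically spreadable and Proposition \ref{extention1} applies verbatim: it extends $E^-$ to a normal map on all of $\A$ with $\phi(aE^-[y]c)=\phi(E^+[a]\,y\,E^+[c])$ for $a,c\in A_{[-\infty,\infty]}$ and $y\in\A$. To extend $E^+$ I would feed the \emph{reflected} sequence into the same machine: by Lemma \ref{reverse} the sequence $(y_i)=(x_{-i})$ is again boolean, hence monotonically, spreadable, and its negative tail algebra and negative conditional expectation are precisely $\ATP$ and $E^+$ of the original sequence. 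Applying Proposition \ref{extention1} to $(y_i)$ therefore extends $E^+$ normally to $\A$ and yields the mirror formula $\phi(aE^+[y]c)=\phi(E^-[a]\,y\,E^-[c])$.

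\textbf{Reduction of the equality.} To prove $E^+=E^-$ I would invoke non-degeneracy of $\phi$: it suffices to show $\phi\big(a(E^+[y]-E^-[y])c\big)=0$ for all $a,c\in\A$ and all $y$. Since $E^\pm$ and $\phi$ are normal, Kaplansky density reduces this to $a,c,y\in A_{[-\infty,\infty]}$, and by linearity to monomials $a=x_{\I}$, $c=x_{\J}$, $y=x_{\Q}$ with all indices in a window $[-L,L]$. For such $y$, Lemma \ref{fixed point} and its mirror give $E^+[y]=\text{WOT-}\lim_{l\to\infty}\alpha^l(y)$ and $E^-[y]=\text{WOT-}\lim_{l\to\infty}\alpha^{-l}(y)$; as $z\mapsto\phi(azc)$ is WOT-continuous on bounded sets, the whole problem collapses to the reflection identity
\[
\lim_{l\to\infty}\phi\big(x_{\I}\,x_{\Q+l}\,x_{\J}\big)=\lim_{l\to\infty}\phi\big(x_{\I}\,x_{\Q-l}\,x_{\J}\big),
\]
that is, pushing the block $\Q$ far to the right or far to the left produces the same moment.

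\textbf{The crux.} For $l>2L$ the index sequences $(\I,\Q+l,\J)$ and $(\I,\Q-l,\J)$ have the same pattern of coincidences between consecutive entries: the inner block is internally unchanged, and at both junctions the outer and inner indices are distinct in either placement because $|{\pm}l|>L$. I would prove that boolean spreadability forces $\phi(x_{\mathcal K})$ to depend only on this coincidence pattern, by establishing the boolean analogue of Proposition \ref{orthogonal} for the generators of $B_i(n,k)$: the same-column orthogonality and increasing relations, pre- and post-multiplied by the absorbing invariant projection $\p$, annihilate every term whose summation index falls outside the boolean class of $\J$ and sum to $\p$ on that class, thereby discarding all order information and retaining only the consecutive-coincidence data. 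Feeding this into the $B_i(n,k)$-invariance exactly as in the proof that operator-valued monotone sequences are spreadable gives the reflection identity, and non-degeneracy then yields $E^+[y]=E^-[y]$ on $A_{[-\infty,\infty]}$; normality propagates this to all of $\A$, whence also $\ATP=E^+(\A)=E^-(\A)=\ATN$.

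\textbf{Main obstacle.} The essential difficulty is this boolean orthogonality computation, not the soft analysis around it. One must pin down the correct boolean equivalence on index sequences and verify the vanishing/summation dichotomy for $B_i(n,k)$; monotone and anti-monotone spreadability do \emph{not} suffice here, since already for a single block one checks $(\I,\Q+l,\J)\not\sim_m(\I,\Q-l,\J)$, so the full strength of the boolean invariance is genuinely required to flip a high block into a low one. Everything else — the two extension formulas, the reduction to monomials, and the passage to the limit — is routine bookkeeping with normality and WOT-continuity.
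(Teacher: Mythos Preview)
Your treatment of the two extensions is correct and matches the paper exactly: Proposition \ref{extention1} for $E^-$, and Lemma \ref{reverse} together with Proposition \ref{extention1} applied to $(x_{-i})$ for $E^+$, yielding the pair of formulas $\phi(aE^-[y]c)=\phi(E^+[a]\,y\,E^+[c])$ and $\phi(aE^+[y]c)=\phi(E^-[a]\,y\,E^-[c])$.

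For the equality $E^+=E^-$, however, the paper takes a different and shorter route than your combinatorial reflection argument. It never proves (or needs) a boolean analogue of Proposition \ref{orthogonal}. Instead it invokes Theorem \ref{theorem 1} twice: once for $(x_i)$, giving monotone independence with respect to $E^+$, and once for the reflected sequence $(x_{-i})$, giving monotone independence with respect to $E^-$ (equivalently, anti-monotone independence of $(x_i)$ with respect to $E^-$). These two factorization properties, combined with the two extension formulas you already have, produce the chain
\[
\phi(aE^-[b]c)=\phi(E^+[a]\,b\,E^+[c])=\phi(E^+[a]E^+[b]E^+[c])=\phi(E^-[a]E^-[b]E^-[c])=\phi(E^-[a]\,b\,E^-[c])=\phi(aE^+[b]c),
\]
where the middle equality comes from writing $E^+[a],E^+[c]$ as WOT-limits of shifts, replacing $E^+[b]$ by $\alpha^L(b)$ for $L$ large, and then using the anti-monotone factorization of $E^-$ on the resulting three-block product. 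Non-degeneracy of $\phi$ and normality finish the job.

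Your proposed route, by contrast, hinges on an unproved boolean orthogonality lemma. There is a concrete problem with it as stated: you invoke ``same-column orthogonality'' among the generators of $B_i(n,k)$, but no such relation is imposed in the definition of $B_i(n,k)$ (only the increasing relation $u_{i,j}u_{i',j'}=0$ for $j<j'$, $i\ge i'$, and the absorption $\sum_i u_{i,j}\p=\p$), and, unlike in $M_i(n,k)$, there is no monotone condition from which to derive it. So the mechanism you describe for collapsing the sum to the coincidence class is not available as written. Your reflection identity may well be true, but proving it directly from the $B_i(n,k)$ relations is harder than you suggest; the paper sidesteps this entirely by cashing in the already-proved monotone de Finetti theorem in both directions.
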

\begin{proof}
Since $(x_i)_{i\in\mathbb{Z}}$ is boolean spreadable, $(x_i)_{i\in\mathbb{Z}}$ is monotonically spreadable. By proposition \ref{extention1} $E^-$ can be extended to the whole algebra. By Lemma \ref{reverse}, $(x_{-i})_{i\in\mathbb{Z}}$ is also boolean  spreadable and its negative-conditional expectation is exactly the positive conditional expectation of $(x_{i})_{i\in\mathbb{Z}}$. Therefore, $E^+$ can also be extended the whole algebra $\A$ normally. Give $a,b,c\in A_{[-\infty,\infty]}$, by Lemma \ref{extention1}, we have 
%$$\phi(aE^-[b] c)=\phi(E^+[a]b E^+[c])=\phi(E^+[E^+[a]b E^+[c]])=\phi(E^+[a]E^+[b] E^+[c])$$
 
 $$ \begin{array}{rcl}
\phi(aE^-[b] c)&=&\phi(E^+[a]b E^+[c])\\
&=&\phi(E^+[E^+[a]b E^+[c]])\\
&=&\phi(E^+[a]E^+[b] E^+[c])\\
&=&\lim\limits_{n\rightarrow \infty}\phi(\alpha^n(a)E^+[b] E^+[c])\\
&=&\lim\limits_{n\rightarrow \infty}\lim\limits_{m\rightarrow \infty}\phi(\alpha^n(a)E^+[b] \alpha^m(c))\\
\end{array}
$$
Notice that, for fixed $n,m$, 
$$\phi(\alpha^n(a)E^+[b] \alpha^m(c))=\phi(\alpha^n(a)\alpha^L(b) \alpha^m(c))
$$
for $L\in\mathbb{N}$ which is large enough.  Since $(x_{-i})_{i\in\mathbb{Z}}$ is monotonically spreadable,  by theorem \ref{theorem 1}, $(x_{-i})_{i\in\mathbb{Z}}$ is monotonically independent with respect to $E^-$. Therefore, we have
$$
\begin{array}{rcl}
&&\phi(\alpha^n(a)E^+[b] \alpha^m(c))\\
&=&\phi(\alpha^n(a)\alpha^L(b) \alpha^m(c))\\
&=&\phi(E^-[\alpha^n(a)\alpha^L(b) \alpha^m(c)])\\
&=&\phi(E^-[\alpha^n(a)]E^-[\alpha^L(b)] E^-[\alpha^m(c)])\\
&=&\phi(E^-[a]E^-[b] E^-[c])\\
&=&\phi(E^-[E^-[a]b E^-[c]])\\
&=&\phi(E^-[a]b E^-[c])\\
&=&\phi(aE^+[b]c)\\
\end{array}
$$

$$ \begin{array}{rcl}
\phi(aE^-[b] c)&=&\phi(E^+[a]b E^+[c])\\
&=&\lim\limits_{n\rightarrow \infty}\lim\limits_{m\rightarrow \infty}\phi(\alpha^n(a)E^+[b] \alpha^m(c))\\
&=&\lim\limits_{n\rightarrow \infty}\lim\limits_{m\rightarrow \infty}\phi(aE^+[b]c)\\
&=&\phi(aE^+[b]c)\\
\end{array}
$$
It implies that $E^+[b]=E^-[b]$ for all $b\in A_{[-\infty,\infty]}$. Since $\A$ is the WOT closure of $A_{[-\infty,\infty]}$, the proof is complete.
\end{proof}

\begin{corollary}
$(\A, \phi)$ is a $W^*$-probability space with a non-degenerated normal state and $\A$ is generated by a bilateral sequence of random variables $(x_i)_{i\in\mathbb{Z}}$ and $(x_i)_{i\in\mathbb{Z}}$ are boolean spreadable.  Then, the positive tail algebra and the negative tail algebra of $(x_i)_{i\in\mathbb{Z}}$ are the same.
\end{corollary}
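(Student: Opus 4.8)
The plan is to obtain the corollary as a nearly formal consequence of Proposition~\ref{equal tail algebra}, which has already done all the analytic work: under boolean spreadability both $E^+$ and $E^-$ extend normally to all of $\A$ and coincide there, $E^+=E^-$. Since a conditional expectation both maps into and recovers its own range, equality of the two maps should force equality of the two tail algebras. So the whole argument reduces to checking that $\ATP$ is exactly the range of $E^+$ and $\ATN$ exactly the range of $E^-$, and then reading off $\ATP=\ATN$ from $E^+=E^-$.

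First I would record that $\ATP$ and $\ATN$ are WOT-closed: by definition $\ATP=\bigcap_{k>0}\A_k^+$ and $\ATN=\bigcap_{k<0}\A_k^-$, each $\A_k^{\pm}$ is a WOT-closure hence WOT-closed, and an intersection of WOT-closed subalgebras is WOT-closed. This is the one point that needs a little care, because the extensions of $E^+$ and $E^-$ furnished by Proposition~\ref{extention1} and Proposition~\ref{equal tail algebra} are built as pointwise/WOT limits of values taken on $A_{[-\infty,\infty]}$; closedness of the tail algebras guarantees that these limits stay inside $\ATP$ and $\ATN$, so the extended maps still satisfy $E^+[\A]\subseteq\ATP$ and $E^-[\A]\subseteq\ATN$.

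Next I would invoke Lemma~\ref{6.11}, giving $E^+[a]=a$ for every $a\in\ATP$, together with its mirror statement for $E^-$; the latter follows by applying the constructions of Section~6 to the reversed sequence $y_i=x_{-i}$, whose positive tail algebra is $\ATN$ and whose positive conditional expectation is $E^-$ (Lemma~\ref{reverse} and Proposition~\ref{equal tail algebra}), so that $E^-[b]=b$ for every $b\in\ATN$. The double inclusion is then immediate: for $a\in\ATP$ we get $a=E^+[a]=E^-[a]\in\ATN$, whence $\ATP\subseteq\ATN$; symmetrically $b=E^-[b]=E^+[b]\in\ATP$ for $b\in\ATN$, whence $\ATN\subseteq\ATP$. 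Thus $\ATP=\ATN$. I do not anticipate a genuine obstacle, since all the substantive content lives in Proposition~\ref{equal tail algebra}; the corollary is essentially a bookkeeping step, with the only subtlety being the WOT-closedness used to control the ranges of the extended expectations.
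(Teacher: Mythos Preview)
Your proposal is correct and matches the paper's intended approach: the paper gives no explicit proof of this corollary, treating it as an immediate consequence of Proposition~\ref{equal tail algebra}. Your write-up is a faithful unpacking of that consequence, and the WOT-closedness observation is exactly the small bookkeeping point needed to ensure the extended maps $E^\pm$ still have range in $\ATP$ and $\ATN$, respectively.
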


Now, we are ready to prove theorem \ref{theorem 2}

\begin{theorem}
Let $(\A,\phi)$ be a non degenerated $W^*$-probability space and $(x_i)_{i\in\mathbb{Z}}$ be a bilateral infinite sequence of selfadjoint random variables which generate $\A$ as a von Neumann algebra. Then the following are equivalent:
\begin{itemize}
\item[a)] The joint distribution of $(x_i)_{i\in \mathbb{N}}$  is boolean spreadable. 

\item[b)]  The sequence $(x_i)_{i\in\mathbb{Z}}$ is identically distributed and boolean independent with respect to the $\phi-$preserving conditional expectation $E^+$ onto the non unital positive tail algebra of the $(x_i)_{i\in\mathbb{Z}}$
\end{itemize}
\end{theorem}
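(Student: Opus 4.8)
The plan is to deduce the boolean de Finetti theorem from the monotone one already established, rather than to redevelop the analysis of Sections~7--8 inside the boolean category. The direction $b)\Rightarrow a)$ is the easy one. First note that $E^+$ is automatically $\phi$-preserving: since $E^+[a]=\mathrm{WOT}\text{-}\lim_l\alpha^l(a)$ and $\phi$ is a normal state fixed by the shift $\alpha$, we get $\phi\circ E^+=\phi$. Granting this, a sequence that is identically distributed and boolean independent over $\ATP$ is boolean spreadable by the boolean analogue of Proposition~\ref{mi is ms}: one repeats verbatim the computation of Proposition~\ref{mi is ms}, using the orthogonality relations of the generators of $B_i(n,k)$ in place of those of $M_i(n,k)$ together with $\phi=\phi\circ E^+$. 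So the real content is $a)\Rightarrow b)$.

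For $a)\Rightarrow b)$ I would first harvest the consequences of boolean spreadability that are already at hand. Boolean spreadability implies monotone spreadability (Section~4), so by Theorem~\ref{theorem 1} the sequence $(x_i)_{i\in\mathbb{Z}}$ is identically distributed and monotone independent with respect to $E^+$. By Lemma~\ref{reverse} the reversed sequence $(x_{-i})_{i\in\mathbb{Z}}$ is again boolean, hence monotone, spreadable; applying Theorem~\ref{theorem 1} to it and invoking Proposition~\ref{equal tail algebra} (which gives $E^-=E^+$, both normal on all of $\A$, and identifies the two tail algebras) shows that $(x_i)_{i\in\mathbb{Z}}$ is also anti-monotone independent with respect to the \emph{same} conditional expectation $E:=E^+=E^-$ onto the common tail algebra $\B:=\ATP=\ATN$. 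Thus the only task left is to upgrade ``monotone and anti-monotone with respect to $E$'' to ``boolean with respect to $E$''.

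I would prove boolean independence,
\[
E[p_1(x_{i_1})\cdots p_n(x_{i_n})]=E[p_1(x_{i_1})]\cdots E[p_n(x_{i_n})],\qquad i_1\neq i_2\neq\cdots\neq i_n,\ p_j\in\B\langle X\rangle_0,
\]
by induction on $n$, peeling off the left-most factor. Comparing $i_1$ with $i_2$: if $i_1>i_2$ the index $i_1$ is a boundary local maximum, and monotone independence gives $E[p_1(x_{i_1})p_2(x_{i_2})\cdots]=E[\,E[p_1(x_{i_1})]\,p_2(x_{i_2})\cdots]=E[p_1(x_{i_1})]\,E[p_2(x_{i_2})\cdots]$, the last step by the $\B$-bimodule property of $E$; if $i_1<i_2$ the index $i_1$ is a boundary local minimum and the identical factorization follows from anti-monotone independence. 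In either case the tail of the word is still alternating of length $n-1$, so the induction closes. The crucial feature is that peeling inspects only \emph{adjacent} indices, so the non-adjacent repetitions that boolean independence permits (words of type $c,d,c$) cause no trouble; it is precisely such words that would obstruct any naive attempt to read boolean independence off an interior monotone pull-out, and peeling from the boundary is designed to sidestep them.

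The main obstacle is justifying the boundary pull-out when the end index is \emph{not} a global extremum, since Proposition~\ref{ms is mi} and Lemma~\ref{maximal index} are stated only for the global maximum, and the definition of monotone independence over $\B$ records only interior peaks. I would close this gap using the $\sim_m$-invariance of the $E^+$-distribution. A singleton at the left end with $i_1>i_2$ is a crest in the sense of Section~5 (with the convention $i_0<i_1$), so by Lemma~\ref{increasing} its value may be raised above every other index without leaving its $\sim_m$-class; since Proposition~\ref{spreadable polynomial} together with the orthogonality of Proposition~\ref{orthogonal} makes the $E^+$-moments depend only on the $\sim_m$-class, this raising does not change $E[p_1(x_{i_1})\cdots p_n(x_{i_n})]$, and afterwards $i_1$ is the global maximum so Proposition~\ref{ms is mi} applies (and $E^+[p_1(x_{i_1})]$ is unchanged by identical distribution). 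The anti-monotone boundary case is symmetric, lowering the boundary valley via the reversed sequence. Together with the already-noted facts that $E$ is $\phi$-preserving and that identical distribution is immediate from spreadability, this completes $a)\Rightarrow b)$.
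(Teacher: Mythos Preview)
Your strategy coincides with the paper's: use Lemma~\ref{reverse} and Proposition~\ref{equal tail algebra} to get monotone \emph{and} anti-monotone independence with respect to the single $E:=E^+=E^-$, and then peel the alternating word from the left. The paper's terse chain
\[
E^+[p_1(x_{i_1})\cdots p_m(x_{i_m})]=E^+[p_1(x_{i_1})]\,E^+[p_2(x_{i_2})\cdots p_m(x_{i_m})]=\cdots
\]
is exactly your induction, and your care about the boundary pull-out is a legitimate point of rigor the paper leaves implicit.

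There is, however, a small gap in your justification of that boundary step. Proposition~\ref{orthogonal} sums $u_{\Q,\I}\p$ over a $\sim_m$-class to produce $\p$ or $0$; paired with Proposition~\ref{spreadable polynomial} this lets you pass \emph{from} $\sim_m$-invariance of the $E^+$-moments \emph{to} spreadability (this is precisely the computation in Proposition~\ref{mi is ms}), not the other way round. Grouping the right-hand side of Proposition~\ref{spreadable polynomial} by $\sim_m$-classes yields nothing unless you already know the $E^+$-moments are constant on each class. So ``Proposition~\ref{spreadable polynomial} together with Proposition~\ref{orthogonal} makes the $E^+$-moments depend only on the $\sim_m$-class'' is not a valid inference as stated.

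The fix is easy and avoids $\sim_m$ altogether: iterate Proposition~\ref{ms is mi}. Given $i_1>i_2$ with $i_1$ not the global maximum, pull out the current global maximum (which lies at some position $\geq 3$, since $i_2<i_1$); positions $1$ and $2$ survive with their indices intact. Repeat until $i_1$ is the global maximum of the remaining word, then apply Proposition~\ref{ms is mi} at $s=1$ and the $\ATP$-bimodule property of $E^+$ to split off $E^+[p_1(x_{i_1})]$. The anti-monotone boundary case is symmetric. Alternatively, once Theorem~\ref{theorem 1} gives monotone independence, the argument of Proposition~5.12 (adapted verbatim to $\ATP$-polynomials) does yield $\sim_m$-invariance of $E^+$-moments, and your crest-raising then goes through; it is only the attribution to Propositions~\ref{spreadable polynomial} and~\ref{orthogonal} that is off.

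For $b)\Rightarrow a)$ the paper takes a shorter route: boolean independence over the tail implies boolean exchangeability by the de Finetti theorem of \cite{Liu}, and the implication diagram of Section~4 gives boolean spreadability. Your direct computation in the style of Proposition~\ref{mi is ms} is equally valid.
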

\begin{proof}
\lq\lq $b)\Rightarrow a)$\rq\rq. If the sequence $(x_i)_{i\in\mathbb{Z}}$ is identically distributed and boolean independent with respect to a $\phi-$preserving conditional expectation $E$ , then sequence $(x_i)_{i\in\mathbb{Z}}$ is boolean exchangeable by theorem 7.1  in \cite{Liu}.  According the diagram in section 4, $(x_i)_{i\in\mathbb{Z}}$ is boolean spreadable.\\

\lq\lq $a)\Rightarrow b)$\rq\rq. By Lemma\ref{equal tail algebra}, $(x_i)_{i\in\mathbb{Z}}$ is  monotone  with respect to $E^+$, $(x_{-i})_{i\in\mathbb{Z}}$ is  monotone  with respect to $E^-$ and $E^+=E^-$. Therefore, 
$$
\begin{array}{rcl}
&&E^+[p_1(x_{i_1})\cdots  p_m(x_{i_m})] =E^+[p_1(x_{i_1})]E^+[p_2(x_{i_2})\cdots  p_m(x_{i_m})]=\cdots\\
&=&E^+[p_1(x_{i_1})]E^+[p_2(x_{i_2})]\cdots\cdots  E^+[p_m(x_{i_m})]
\end{array}
$$
whenever  $i_1\neq\cdots \neq i_m$ and $p_1,...,p_m\in\ATP\langle X\rangle $. 
The proof is complete.

\end{proof}

\printbibliography
\noindent Department of Mathematics\\
University of California at Berkeley\\
Berkeley, CA 94720, USA\\
E-MAIL: weihualiu@math.berkeley.edu\\

\end{document}